\theoremstyle{plain} 
\newtheorem{thm}{Theorem}[section]
\newtheorem{lem}[thm]{Lemma}
\newtheorem{prop}[thm]{Proposition}
\theoremstyle{remark}
\newtheorem{defn}[thm]{Definition}
\newtheorem{ex}[thm]{Example}
\newtheorem{rem}[thm]{Remark}
\newcommand{\cA}{\mathcal{A}}
\newcommand{\E}{\mathbb{E}}
\newcommand{\cG}{\mathcal{G}}
\newcommand{\cI}{\mathcal{I}}
\newcommand{\R}{\mathbb{R}}
\newcommand{\cU}{\mathcal{U}}
\newcommand{\Z}{\mathbb{Z}}
\newcommand{\cV}{\mathcal{V}}
\newcommand{\bs}{\boldsymbol}
\begin{document}
	\begin{frontmatter}
\title{Supplementary material: Non-Independent Components Analysis}
\runtitle{Supplementary material}

\begin{aug}
\author[A]{\fnms{Geert}~\snm{Mesters}\ead[label=e1]{geert.mesters@upf.edu}\orcid{0000-0001-6996-9520}},
\author[B]{\fnms{Piotr}~\snm{Zwiernik}\ead[label=e2]{piotr.zwiernik@utoronto.ca}\orcid{0000-0003-3431-131X}}
\address[A]{Department of Economics and Business,
Universitat Pompeu Fabra\printead[presep={,\ }]{e1}}

\address[B]{Department of Statistical Sciences,
University or Toronto\printead[presep={,\ }]{e2}}
\end{aug}

\begin{abstract}
	We provide the following additional results. 
	\begin{itemize} 
		\item[\ref{appsec:mainproofs}:] Omitted proofs -- main text 
		\item[\ref{appsec:addmotivation}:] Additional motivation 
		\item[\ref{appsec:localident}:] Local identification beyond signed-permutations
		\item[\ref{appsec:cumulant}:] Moments and Cumulants --- some useful properties 
		\item[\ref{appsec:inferencedetails}:] Additional inference tools
		\item[\ref{appsec:assvar}:] Computing the asymptotic variance 
		\item[\ref{appsec:simsadditional}:] Additional simulation results 
		\item[\ref{appsec:proofsomitted}:] Additional proofs.   
	\end{itemize}
\end{abstract}

\begin{keyword}[class=MSC]
\kwd[Primary ]{15A69}
\kwd{62H99}
\kwd[; secondary ]{}
\end{keyword}

\begin{keyword}
\kwd{independent component analysis}
\kwd{identifiability}
\kwd{cumulants}
\kwd{tensors}
\end{keyword}

\end{frontmatter}
\setcounter{section}{-1}

\section{Omitted proofs -- main text}\label{appsec:mainproofs} 

In this section we collect the omitted proofs from the main text. 

\subsection{Omitted proof from Section \ref{sec:mainres} }

\begin{proof}[Proof of Proposition~\ref{prop:binaryMI}]

The condition $Q\bullet T\in \cV$ translates into two equations $(Q\bullet T)_{12\cdots 2}=(Q\bullet T)_{1\cdots 12}=0$. In other words,
$$
Q_{11}\sum_{\bs j}Q_{2j_1}\cdots Q_{2 j_{r-1}} T_{1\bs j}+  Q_{12}\sum_{\bs j}Q_{2j_1}\cdots Q_{2 j_{r-1}} T_{2\bs j}\;=\;0
$$
and
$$
Q_{21}\sum_{\bs j}Q_{1j_1}\cdots Q_{1 j_{r-1}} T_{1\bs j}+  Q_{22}\sum_{\bs j}Q_{1j_1}\cdots Q_{1 j_{r-1}} T_{2\bs j}\;=\;0,
$$
where in both cases the sum goes over all $(r-1)$-tuples $\bs j=(j_1,\ldots,j_{r-1})\in \{1,2\}^{r-1}$. Note that, since $T$ is symmetric, the entry $T_{\bs i}$ depends only on how many times $1$ appears in $\bs i$. Write $t_k=T_{\bs i}$ if $\bs i$ has $k$ ones, $k=0,\ldots,r$. With this notation the two equations above simplify to
$$
\sum_{k=0}^{r-1}\binom{r-1}{k}Q_{11}Q_{21}^kQ_{22}^{r-1-k}t_{k+1}+\sum_{k=0}^{r-1}\binom{r-1}{k}Q_{12}Q_{21}^kQ_{22}^{r-1-k}t_{k}\;=\;0
$$
and $$
\sum_{k=0}^{r-1}\binom{r-1}{k}Q_{21}Q_{11}^kQ_{12}^{r-1-k}t_{k+1}+\sum_{k=0}^{r-1}\binom{r-1}{k}Q_{22}Q_{11}^kQ_{12}^{r-1-k}t_{k}\;=\;0.
$$
If one of the entries of $Q$ is zero then $Q$ is a permutation matrix. So assume that $Q$ has no zeros. Assume also without loss of generality that $Q$ is a rotation matrix, that is, $Q_{11}=Q_{22}$ and $Q_{12}=-Q_{21}$. Denote $z=Q_{21}/Q_{11}$, which corresponds to the tangent of the rotation angle and so it can take any non-zero value (zero is not possible as $Q_{21}\neq 0$). With this notation and after dividing by $Q_{11}^r$, the two equations become
\begin{equation}\label{eq:bin1}
	\sum_{k=0}^{r-1}\binom{r-1}{k}z^kt_{k+1}-\sum_{k=0}^{r-1}\binom{r-1}{k}z^{k+1}t_{k}\;=\;0
\end{equation}
and 
$$
\sum_{k=0}^{r-1}\binom{r-1}{k}(-1)^{r-1-k}z^{r-k}t_{k+1}+\sum_{k=0}^{r-1}\binom{r-1}{k}(-1)^{r-1-k}z^{r-1-k}t_{k}\;=\;0.
$$
It is convenient to rewrite the latter as
\begin{equation}\label{eq:bin2}
	\sum_{k=0}^{r-1}\binom{r-1}{k}(-1)^{k}z^{k+1}t_{r-k}+\sum_{k=0}^{r-1}\binom{r-1}{k}(-1)^{k}z^{k}t_{r-k-1}\;=\;0.
\end{equation}
Using the fact that $t_1=t_{r-1}=0$, \eqref{eq:bin1} can be written as 
$$
\sum_{k=1}^{r-1}\left(\binom{r-1}{k}t_{k+1}-\binom{r-1}{k-1}t_{k-1}\right)z^k=0.
$$
and \eqref{eq:bin2} can be written as
$$
\sum_{k=1}^{r-1}\left(\binom{r-1}{k}t_{r-k-1}-\binom{r-1}{k-1}t_{r-k+1}\right)(-z)^k=0.
$$
Since $z\neq 0$, we can divide by it and in both cases we obtain two polynomials of order $r-2$. The first polynomial has coefficients
$$
a_k=\binom{r-1}{k+1}t_{k+2}-\binom{r-1}{k}t_{k}\qquad\mbox{for }k=0,\ldots,r-2
$$
and the second has coefficients
$$
b_k=(-1)^{k-1}\left(\binom{r-1}{k+1}t_{r-k-2}-\binom{r-1}{k}t_{r-k}\right)=(-1)^ka_{r-k-2}.
$$
A common zero for these two polynomials exists if and only if the corresponding resultant is zero. Resultant is defined as the determinant of a certain matrix populated with the coefficients of both polynomials. After reordering the columns of this matrix, we obtain
$$
\begin{bmatrix}
	a_0 & a_{r-2} & 0 & 0 & \cdots & 0 & 0\\
	a_1 & -a_{r-3} & a_0 & a_{r-2} & \cdots & 0 & 0\\
	\vdots & \vdots & \vdots & \vdots & \cdots & \vdots & \vdots\\
	a_{r-2} & (-1)^ra_{0} & a_{r-3} & (-1)^{r-1}a_{1} & \cdots & a_0 & a_{r-2}\\
	0 & 0 & a_{r-2} & (-1)^ra_{0} & \cdots & a_1 & -a_{r-2}\\
	0 & 0 & 0 & 0 & \cdots & a_2 & a_{r-3}\\
	\vdots & \vdots & \vdots & \vdots & \cdots & \vdots& \vdots\\
	0 & 0 & 0 & 0 & \cdots & a_{r-2} & (-1)^ra_0\\
\end{bmatrix}.
$$
The first two columns are linearly independent of each other unless the second is a multiple of the first. Indeed, if $r$ is odd, this is only possible if $a_0=\cdots =a_{r-2}=0$ (which cannot hold under the genericity assumptions). If $r$ is even this is possible if and only if either $a_k=(-1)^ka_{r-2-k}$ for all $k$, or $a_k=(-1)^{k-1}a_{r-2-k}$ for all $k$ (which cannot hold under the genericity assumptions). By the same argument, the third and the fourth column are independent of each other and linearly independent of the previous two. Proceeding recursively like that, we conclude that all columns in this matrix are linearly independent proving that the two polynomials cannot have common roots. In other words, there is no rotation matrix apart from the $0^\circ$ and the $90^\circ$ rotation matrices that satisfy $Q\bullet T\in \cV$. 
\end{proof}

\subsection{Omitted proofs from Section \ref{sec:inference}}  

\begin{proof}[Proof of Lemma~\ref{lem:mainrephrase}]
	We have $L_W(A)=0$ if and only if $g(A)=0$, which is equivalent $A\bullet h_2(Y)=I_d$ and $ A\bullet h_r(Y)\in \cV$. Since \eqref{nica} holds, we also have
	$A_0\bullet h_2(Y)= I_2$ and $A_0\bullet h_r(Y)\in \cV$. It follows that $A_0^{-1}A\in {\rm O}(d)$, or in other words, $A=QA_0$ for some $Q\in {\rm O}(d)$. Further, 
	$$
	A\bullet h_r(Y)=QA_0\bullet h_r(Y)=Q\bullet h_r(\varepsilon)\in \cV,
	$$
	which implies that $Q\in \cG_T(\cV)$ and by Theorem \ref{th:diagonal} or \ref{th:reflinv} we have $\cG_T(\cV)={\rm SP}(d)$.	
\end{proof}

\begin{proof}[Proof of Proposition~\ref{prop:consist}] 

The proof follows from verifying the conditions for consistency of a general extremum estimator. Specifically, we will verify the conditions of Theorem 2.1 in \cite{NeweyMcFadden1994}. We restate the theorem for completeness. 
\begin{thm}\label{thm:consistnewey}
	Suppose that $\hat \theta$ minimizes $\hat L_n(\theta)$ over $\theta \in \Theta$. Assume that there exists a function $L_0(\theta)$ such that (a) $L_0(\theta)$ is uniquely minimized at $\theta_0$, (b) $L_0(\theta)$ is continuous, (c) $\Theta$ is compact and (d) $\sup_{\theta \in \Theta} | \hat L_n(\theta) -  L_0(\theta) | \stackrel{p}{\to} 0$, then $\hat \theta \stackrel{p}{\to} \theta_0$.     
\end{thm}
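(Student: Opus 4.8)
The plan is to run the standard ``identification-plus-uniform-convergence'' argument. Fix an arbitrary $\varepsilon>0$ and let $N=\{\theta:\|\theta-\theta_0\|<\varepsilon\}$ be the open ball of radius $\varepsilon$ around $\theta_0$. It suffices to show that $\P(\hat\theta\in N)\to 1$, since $\varepsilon$ is arbitrary this is exactly the claim $\hat\theta\stackrel{p}{\to}\theta_0$.

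First I would extract a uniform separation constant from the population criterion. The complement $\Theta\setminus N$ is a closed subset of the compact set $\Theta$ by condition (c), hence itself compact. By continuity of $L_0$ (condition (b)) the infimum of $L_0$ over $\Theta\setminus N$ is attained, and since $\theta_0$ is the \emph{unique} minimizer of $L_0$ on all of $\Theta$ (condition (a)) while $\theta_0\notin\Theta\setminus N$, that infimum strictly exceeds $L_0(\theta_0)$. I would therefore set
$$
\delta\;=\;\inf_{\theta\in\Theta\setminus N}L_0(\theta)-L_0(\theta_0)\;>\;0,
$$
so that $L_0(\theta)\geq L_0(\theta_0)+\delta$ for every $\theta\in\Theta\setminus N$.

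Next comes the probabilistic sandwich. Let $E_n$ be the event $\{\sup_{\theta\in\Theta}|\hat L_n(\theta)-L_0(\theta)|<\delta/3\}$; condition (d) gives $\P(E_n)\to 1$. On $E_n$ I would chain three inequalities, combining the defining optimality $\hat L_n(\hat\theta)\leq\hat L_n(\theta_0)$ with the uniform bound applied once at the random point $\hat\theta$ and once at the fixed point $\theta_0$:
$$
L_0(\hat\theta)\;<\;\hat L_n(\hat\theta)+\tfrac{\delta}{3}\;\leq\;\hat L_n(\theta_0)+\tfrac{\delta}{3}\;<\;L_0(\theta_0)+\tfrac{2\delta}{3}.
$$
Thus on $E_n$ we have $L_0(\hat\theta)<L_0(\theta_0)+\delta$, which by the definition of $\delta$ forces $\hat\theta\notin\Theta\setminus N$. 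Hence $E_n\subseteq\{\hat\theta\in N\}$, giving $\P(\hat\theta\in N)\geq\P(E_n)\to 1$, as required.

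The main obstacle — and the only genuinely subtle point — is that the middle display evaluates both $L_0$ and $\hat L_n$ at the \emph{random} argument $\hat\theta$. Mere pointwise convergence of $\hat L_n$ would be insufficient here, because $\hat\theta$ is not known a priori to converge and could in principle wander through regions where the pointwise discrepancy stays large. Condition (d) is precisely what allows the step $|\hat L_n(\hat\theta)-L_0(\hat\theta)|<\delta/3$ to hold simultaneously for every possible value of $\hat\theta$ on the event $E_n$. Once that uniform control is in place, the remainder is routine bookkeeping with the constant $\delta/3$.
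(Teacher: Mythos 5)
Your proof is correct: it is precisely the classical argument behind Theorem~2.1 of Newey and McFadden (1994), which the paper restates without proof and invokes purely by citation, so your sandwich of the three $\delta/3$-bounds (uniform convergence at the random point $\hat\theta$, optimality $\hat L_n(\hat\theta)\leq\hat L_n(\theta_0)$, uniform convergence at $\theta_0$) combined with the compactness-based separation constant $\delta>0$ is exactly the intended argument. The only cosmetic omission is the trivial case $\Theta\setminus N=\emptyset$, in which $\hat\theta\in N$ holds automatically, so nothing is lost.
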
	
Next, we verify assumptions (a)-(d) under assumptions (i)-(iv) stated in Proposition~\ref{prop:consist}. First, note that $\widehat A_{W_n}$ minimizes $\hat L_{W_n}(A)$ and we take $L_W(A)$ as $L_0(\theta)$ in Theorem~\ref{thm:consistnewey}. Second, in our case the minimizer of $L_W(A)$ is not unique but will correspond to any of the finite points $Q A_0$ for some $Q \in SP(d)$. It follows that our consistency result will only be up to permutation and sign changes of the true $A_0$ \cite[e.g.][]{chenbickel}. 
Formally, for (a): suppose that $A$ is such that $A  \neq Q A_0$ for any $Q \in {\rm SP}(d)$, then $g(A) \neq 0$ by assumption (i) and, since $W$ is positive definite by (ii), we have $L_W(A) > 0$. Hence it follows that $L_W(A)$ is only minimized at $Q A_0$ for some $Q \in {\rm SP}(d)$.
Condition (b) follows as $L_W(A)$ is a composition of two polynomial maps. Condition (c) follows from (ii). Condition (d) is assured by the following result. 
\begin{lem}\label{uniformcompact}
	Suppose that $\{ Y_s \}_{s=1}^n$ is i.i.d, $W_n \stackrel{p}{\to } W$, $\mathbb E \|Y_{s}\|^r  < \infty$, and $\cA\subset {\rm GL}(d)$ is a compact set. Then 
	\[
	\sup_{A \in \mathcal \cA} |\hat L_{W_n}(A) - L_{W}(A)  | \stackrel{p}{\to} 0 
	\]
\end{lem}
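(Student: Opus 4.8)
The plan is to establish this uniform law of large numbers directly, exploiting the fact that the only randomness in $\hat L_{W_n}$ enters through the finitely many sample statistics $\hat{h}_2,\hat{h}_r$, while the dependence on $A$ is deterministic and polynomial. First I would record that, by the definition \eqref{eq:gst} together with the linearity of the multilinear action $T\mapsto A\bullet T$, of $\pi_{\cV}$, and of ${\rm vec}_u$,
$$
\hat g_n(A)-g(A)\;=\;{\rm vec}_u\bigl(A\bullet(\hat{h}_2-h_2(Y)),\;\pi_{\cV}(A\bullet(\hat{h}_r-h_r(Y)))\bigr).
$$
This is the key structural point: the stochastic part of $\hat g_n(A)$ is carried entirely by the two residual tensors $\hat{h}_p-h_p(Y)$, and $A$ acts on them only through the fixed linear maps $A^{\otimes 2},A^{\otimes r}$ via \eqref{eq:kronecker}.

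Second, I would turn this into a uniform bound. Since $\|A^{\otimes p}\|=\|A\|^p$ and $\cA$ is compact, the constants $C_p:=\sup_{A\in\cA}\|A^{\otimes p}\|$ are finite, so by \eqref{eq:kronecker},
$$
\sup_{A\in\cA}\|\hat g_n(A)-g(A)\|\;\le\;C_2\,\|\hat{h}_2-h_2(Y)\|+C_r\,\|\hat{h}_r-h_r(Y)\|\;\stackrel{p}{\to}\;0,
$$
where the convergence follows from Lemma~\ref{lem:convergence}(1), which is valid under $\mathbb E\|Y_s\|^r<\infty$ for both the moment and the cumulant choice of $h$. The same compactness together with continuity (indeed polynomiality) of $g$ gives $M:=\sup_{A\in\cA}\|g(A)\|<\infty$.

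Finally, writing $\Delta_n(A):=\hat g_n(A)-g(A)$ and adding and subtracting,
$$
\hat L_{W_n}(A)-L_W(A)\;=\;\Delta_n(A)'W_n\hat g_n(A)+g(A)'W_n\Delta_n(A)+g(A)'(W_n-W)g(A),
$$
and I would bound each summand uniformly by Cauchy--Schwarz and operator-norm inequalities, using $\|\hat g_n(A)\|\le M+\sup_{A\in\cA}\|\Delta_n(A)\|$. Since $W_n\stackrel{p}{\to}W$ makes $\|W_n\|$ bounded in probability and $\|W_n-W\|\stackrel{p}{\to}0$, and $\sup_{A\in\cA}\|\Delta_n(A)\|\stackrel{p}{\to}0$ from the previous step, each of the three terms tends to $0$ uniformly in $A$; taking $\sup_{A\in\cA}$ and then the probability limit yields the claim. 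The argument is routine rather than deep, and the one point worth emphasizing is that no empirical-process machinery (Glivenko--Cantelli or bracketing) is required: the stochastic part of the objective is finite-dimensional, and the $A$-dependence enters only through the polynomial maps $A^{\otimes 2},A^{\otimes r}$, so compactness of $\cA$ alone makes every relevant quantity uniformly bounded. The only place where care is needed is checking that the moment condition $\mathbb E\|Y_s\|^r<\infty$ indeed suffices for the consistency of the sample statistics, which is precisely what Lemma~\ref{lem:convergence} provides.
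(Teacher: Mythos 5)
Your proof is correct, and its first half is essentially the paper's own argument: you express $\hat g_n(A)-g(A)$ as the image of the residual tensors $\hat h_p-h_p(Y)$ under the fixed linear maps determined by $A^{\otimes 2},A^{\otimes r}$ and $\pi_{\cV}$, then use compactness of $\cA$ and Lemma~\ref{lem:convergence}(1) to conclude $\sup_{A\in\cA}\|\hat g_n(A)-g(A)\|\stackrel{p}{\to}0$; the paper does exactly this via \eqref{eq:mdiff}. Where you genuinely diverge is the passage from uniform convergence of $\hat g_n$ to uniform convergence of the loss. The paper splits $|\hat L_{W_n}-L_W|\le|\hat L_{W_n}-L_{W_n}|+|L_{W_n}-L_W|$ and then asserts, ``by the triangle inequality,''
$$
\bigl|\,\|\hat g_n(A)\|^2_{W_n}-\|g(A)\|^2_{W_n}\bigr|\;\le\;\|\hat g_n(A)-g(A)\|^2_{W_n},
$$
but this inequality is false in general (take scalars $\hat g_n=2$, $g=1$, $W_n=1$: $|4-1|=3>1$); the difference of quadratic forms equals $(\hat g_n-g)'W_n(\hat g_n+g)$, which is controlled by the \emph{product} $\|\hat g_n-g\|\cdot\|\hat g_n+g\|$, not by $\|\hat g_n-g\|^2$. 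Your three-term decomposition $\Delta_n'W_n\hat g_n+g'W_n\Delta_n+g'(W_n-W)g$, bounded term by term via Cauchy--Schwarz using $M=\sup_{A\in\cA}\|g(A)\|<\infty$, boundedness in probability of $\|W_n\|$, and $\|\hat g_n(A)\|\le M+\sup_{A\in\cA}\|\Delta_n(A)\|$, is precisely the rigorous repair of that step: each summand is $o_p(1)$ uniformly in $A$, and the only price is tracking the extra bounded factor $\|\hat g_n+g\|$ that the paper's shortcut implicitly (and invalidly) discards. So the two proofs share the same skeleton, but your handling of the quadratic form is the one that actually closes the argument.
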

\begin{proof}
	First, note that given the i.i.d. assumption and the moment condition~(iv) we have that $\|\hat{\bs \mu}_p - \mu_p(Y) \|\stackrel{p}{\to} 0$ and $\| \mathsf k_p  - \kappa_r(Y) \| \stackrel{p}{\to} 0$ for any $p \leq r$ by Lemma~\ref{lem:convergence} part 1. Note that the norm $\| \cdot \|$ on the tensor is defined in the usual way as the sum of the squares of all elements. Using the general notation of Section~\ref{sec:inference} we have that $\| \hat h_p - h_p(Y) \| \stackrel{p}{\to} 0$ for $p \leq r$. Hence, 
	$$
	\sup_{A \in \cA}\|A^{\otimes p}  {\rm vec}( \hat h_p  -  h_p(Y))  \|^2 \leq \|\hat h_p - h_p(Y) \|^2 \sup_{A \in \cA}  \| A^{\otimes p} \|^2  \stackrel{p}{\to} 0. 
	$$
	Here we used the fact that $\cA$ is a compact and so, in particular, $\| A^{\otimes p} \|^2$ is bounded on $\cA$. 
	
	Using \eqref{eq:mdiff}, we get 
	\begin{align*}
		\sup_{A \in \cA} \|\hat m_n(A) - m(A)  \|^2 \leq& \sup_{A \in \cA} \|A^{\otimes 2} {\rm vec}( \hat h_2  -  h_2(Y))  \|^2 \\  &+\sup_{A \in \cA} \|A^{\otimes r} {\rm vec}( \hat h_r  -  h_r(Y) )  \|^2  \stackrel{p}{\to} 0~.
	\end{align*}
	As $g_{S,T}(A)$ is defined in \eqref{eq:gst} as a projection of $m_{S,T}(A)$ on certain coordinates, we conclude that 
	$$
	\sup_{A\in \cA}\|\hat g_n(A)-g(A)\| \stackrel{p}{\to} 0.
	$$
	By the triangle inequality 
	$$
	\left|\hat L_{W_n}(A) - L_W(A) \right|  \leq  \left|\hat L_{W_n}(A) - L_{W_n}(A)\right|+\left| L_{W_n}(A) - L_W(A)\right|.
	$$
	The second term is is readily bounded by $\|g(A)\|^2 \|W_n-W\|$ using the basic operator norm inequality. To bound the first term, note that, by the triangle inequality
	$$
	\left|\hat L_{W_n}(A) - L_{W_n}(A)\right|\;=\;\left| \|\hat g_n(A)\|^2_{W_n}-\|g(A)\|^2_{W_n}\right|\;\leq\;\|\hat g_n(A)-g(A)\|^2_{W_n},
	$$
	which can be bounded by $\|\hat g_n(A)-g(A)\|^2\|W_n\|$.	We conclude that 
	$$
	\left|\hat L_{W_n}(A) - L_W(A) \right|  \leq \|\hat g_n(A)-g(A)\|^2\|W_n\|+ \|g(A)\|^2 \|W_n-W\|.
	$$
	It follows that $\sup_{A \in \cA}|\hat L_{W_n}(A) - L_W(A) |  \stackrel{p}{\to} 0$ as required. 
\end{proof}
We may now apply Theorem~\ref{thm:consistnewey} to conclude that $\widehat A_{W_n} \stackrel{p}{\to} Q A_0$ for some $Q \in {\rm SP}(d)$. 	
\end{proof}

\subsection{Proof of Proposition~\ref{prop:assnormal}}

The proof follows from verifying the conditions for asymptotic normality of a generalized moment or distance estimator. Specifically, we will verify the conditions of Theorem 3.2 in \cite{NeweyMcFadden1994}. We restate the theorem for completeness. 
\begin{thm}\label{th:NMFasnorm}
	Suppose that $\hat \theta$ minimizes $\hat L_n(\theta)$ over $\theta \in \Theta$ with $\Theta$ compact, where $\hat L_n(\theta)$ is of the form $\hat g_n(\theta)' W_n \hat g_n(\theta)$ and $W_n \stackrel{p}{\to} W$ with $W$ positive semi-definite, $\hat \theta \stackrel{p}{\to} \theta_0$ and (a) $\theta_0 \in {\rm Int}(\Theta)$, (b) $\hat g_n(\theta)$ is continuously differentiable in a neighborhood $\mathcal N$ of $\theta_0$, (c) $\sqrt{n} \hat g_n(\theta_0) \stackrel{d}{\to} N(0,\Omega)$, (d) there is $G(\theta)$ that is continuous at $\theta_0$ and $\sup_{\theta \in \Theta} \| \nabla_{\theta} \hat g_n(\theta) - G(\theta)  \|\stackrel{p}{\to} 0$, (e) for $G = G(\theta_0)$, $G'WG$ is nonsingular. Then, $$\sqrt{n}(\hat \theta - \theta_0) \;\;\;\;\stackrel{d}{\to}\;\;\;\; N\big(0, (G'WG)^{-1} G'W \Omega W G'  (G'WG)^{-1}\big)~.$$		\end{thm}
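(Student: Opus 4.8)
The plan is to run the classical argument for asymptotic normality of a minimum-distance estimator, deriving everything from the first-order condition of the optimization together with a mean-value expansion. First I would combine the consistency $\hat\theta \stackrel{p}{\to}\theta_0$ with assumption (a), that $\theta_0$ is interior to $\Theta$, to conclude that with probability approaching one $\hat\theta$ lies in the interior of $\Theta$ and inside the neighborhood $\mathcal N$ of differentiability supplied by (b). On this event the stationarity condition $\nabla_\theta \hat L_n(\hat\theta)=0$ holds. Since $\hat L_n(\theta)=\hat g_n(\theta)'W_n\hat g_n(\theta)$ and $W_n$ is symmetric, differentiating gives the estimating equation $\hat G(\hat\theta)'W_n\hat g_n(\hat\theta)=0$, where I write $\hat G(\theta):=\nabla_\theta \hat g_n(\theta)$ for the $d_g\times p$ sample Jacobian.

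Next I would linearize $\hat g_n(\hat\theta)$ about $\theta_0$. Applying the mean value theorem coordinate by coordinate to the vector $\hat g_n$ yields $\hat g_n(\hat\theta)=\hat g_n(\theta_0)+\bar G_n(\hat\theta-\theta_0)$, where $\bar G_n$ is the matrix whose $j$-th row is the gradient of the $j$-th coordinate of $\hat g_n$ evaluated at an intermediate point $\bar\theta_j$ on the segment between $\hat\theta$ and $\theta_0$. Substituting into the estimating equation and rearranging gives $\hat G(\hat\theta)'W_n\bar G_n(\hat\theta-\theta_0)=-\hat G(\hat\theta)'W_n\hat g_n(\theta_0)$, and scaling by $\sqrt n$ produces $\hat G(\hat\theta)'W_n\bar G_n\,\sqrt n(\hat\theta-\theta_0)=-\hat G(\hat\theta)'W_n\,\sqrt n\,\hat g_n(\theta_0)$.

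The crux is to show that the matrix multiplying $\sqrt n(\hat\theta-\theta_0)$ converges to the deterministic, invertible limit $G'WG$. Each intermediate point $\bar\theta_j$ is trapped between $\hat\theta$ and $\theta_0$, both of which tend in probability to $\theta_0$, so $\bar\theta_j\stackrel{p}{\to}\theta_0$ for every $j$. Combining this with the uniform convergence $\sup_{\theta\in\Theta}\|\hat G(\theta)-G(\theta)\|\stackrel{p}{\to}0$ from assumption (d) and the continuity of $G$ at $\theta_0$, I would conclude $\hat G(\hat\theta)\stackrel{p}{\to}G$ and $\bar G_n\stackrel{p}{\to}G$; together with $W_n\stackrel{p}{\to}W$ this yields $\hat G(\hat\theta)'W_n\bar G_n\stackrel{p}{\to}G'WG$. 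By assumption (e) the limit is nonsingular, hence the sample matrix is invertible with probability approaching one, and I may invert to obtain $\sqrt n(\hat\theta-\theta_0)=-[\hat G(\hat\theta)'W_n\bar G_n]^{-1}\hat G(\hat\theta)'W_n\,\sqrt n\,\hat g_n(\theta_0)$.

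Finally I would invoke the central limit assumption (c), $\sqrt n\,\hat g_n(\theta_0)\stackrel{d}{\to}N(0,\Omega)$, and apply Slutsky's theorem: the prefactor converges in probability to the constant matrix $-(G'WG)^{-1}G'W$, so the product converges in distribution to that matrix applied to an $N(0,\Omega)$ vector, which is mean-zero Gaussian with covariance $(G'WG)^{-1}G'W\,\Omega\,WG(G'WG)^{-1}$, as claimed. The main obstacle is the vector-valued mean-value step: because each coordinate of $\hat g_n$ carries its own intermediate point $\bar\theta_j$, the matrix $\bar G_n$ is not literally a Jacobian evaluated at a single argument, so the delicate point is to establish $\bar G_n\stackrel{p}{\to}G$ rigorously. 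Assumption (d), uniform convergence of the Jacobian over $\Theta$ rather than mere pointwise convergence, is exactly what makes this step valid regardless of which intermediate points are selected.
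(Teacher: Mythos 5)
Your proposal is correct, and it is precisely the classical argument: the paper does not prove this statement itself but imports it verbatim as Theorem~3.2 of \cite{NeweyMcFadden1994}, and your first-order-condition plus coordinate-wise mean-value expansion, with uniform Jacobian convergence (d) used to tame the row-dependent intermediate points $\bar\theta_j$ and Slutsky applied via (c) and (e), is exactly how that theorem is established in the cited source. You correctly identify the one genuinely delicate step (that $\bar G_n$ is not a Jacobian at a single point, so pointwise convergence would not suffice), and as a minor bonus your final covariance $(G'WG)^{-1}G'W\Omega WG(G'WG)^{-1}$ silently fixes the dimensional typo in the paper's display, where $WG'$ should read $WG$.
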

The loss $\hat L_n(\theta)$ in Theorem~\ref{th:NMFasnorm} corresponds to our $\hat L_n(A)$. Our $\hat g_n(A)$ corresponds to their $\hat g_n(\theta)$. We have $\widehat A_{W_n} \stackrel{p}{\to} \tilde A_0=Q A_0$ for some $Q\in {\rm SP}(d)$ by Proposition~\ref{prop:consist}, and the conditions on the weighting matrix are satisfied by (ii). Condition (a) of Theorem~\ref{th:NMFasnorm} is satisfied by assumption (v). For (b) note that $\hat g_n(A)$ is a polynomial map  in $A$ and hence smooth. For (c), by Lemma~\ref{lem:convergence}, $\sqrt{n}\, {\rm vec}(\hat m_n(\tilde A_0)-m(\tilde A_0))$ weakly converges to $N(0,\Sigma_h^{2,r})$, where $h = \mu $ or $h= \mathsf k$ pending whether moments or $\mathsf k$-statistics are used to compute $\hat m_n(A)$. The variance matrices are defined in \eqref{sigma2rmu} or \eqref{eq:Sig2r}. However, $\hat g_n(\tilde A_0)$ is simply a projection of $(\hat m_n(\tilde A_0)-m(\tilde A_0))$ onto the coordinates of $\cV^\perp$. Therefore, it also  weakly converges to $N(0,\Sigma)$, where 
\begin{equation}\label{eq:thisissigma}
	\Sigma = D_{\mathcal I}^{2,r} \Sigma_h^{2,r} D_{\mathcal I}^{2,r'}~
\end{equation}
with
$D_{\mathcal I}^{2,r}$ being a selection matrix that selects the corresponding to the unique entries in $S^r(\R^d)\oplus \cV^\perp$. Note that the specific form of $\Sigma$ depends on whether moment or cumulant restrictions are used, i.e. $h = \mu, \kappa$. Here we suppress this dependence in the notation, but in Appendix~\ref{appsec:assvar} where we discuss the estimation of $\Sigma$ we make it explicit.

We now show that  (d) holds. The derivative of the map $g_{S,T}(A)$ in \eqref{eq:gst} is a linear mapping from $\R^{d\times d}$ to $\R^{d_g}$. It is obtained as a composition of the derivative of $m_{S,T}(A)$ given by the vectorized version of  $(K_{S,A}(V),K_{T,A}(V))$, with each component defined in \eqref{eq:jacqt}, and the projection $\pi_\cV$. Thus, the derivative is given by mapping $V\in \R^{d\times d}$ to the vector
$$
{\rm vec}\Big((V,A)\bullet S+(A,V)\bullet S,\;\pi_{\cV}\big((V,A,\ldots,A)\bullet T+\cdots +(A,\ldots,A,V)\bullet T\big)\Big).
$$

The Jacobian matrix $G_{S,T}(A)$ representing this derivative has $d^2$ columns and the column corresponding to variable $A_{ij}$ is obtained simply by evaluating the derivative at the unit matrix $E_{ij}\in \R^{d\times d}$. In symbols, this column is given by stacking the vector  $(E_{ij}\otimes A+A\otimes E_{ij}){\rm vec}(S)$ over the vector
\begin{equation}\label{eq:jaccol}
\big((E_{ij}\otimes A\otimes \cdots\otimes A)+\cdots+(A\otimes \cdots\otimes A\otimes E_{ij})\big) \cdot {\rm vec}(T),
\end{equation}
and then selecting only the entries corresponding to the $2$-tuples $i\leq j$ and  $r$-tuples in $\cI$.

Denote the Jacobian $G_{S,T}$ by $G(A)$ if $S=\mu_2(Y)$, $T=\mu_r(Y)$ and by $\widehat G(A)$ if $S=\hat{\bs \mu}_2$, $T=\hat{\bs \mu}_r$ (or $S=\kappa_2(Y)$, $T=\kappa_r(Y)$ and $S = \mathsf{k}_2$, $T = \mathsf{k}_r$  ).
The columns of $\widehat G(A) - G(A)$ are like explained in \eqref{eq:jaccol} with
$S = \hat{\bs \mu}_2 - \mu_2(Y)$ and $T = \hat{\bs \mu}_r - \mu_r(Y)$ (or 
$S=\mathsf{k}_2-\kappa_2(Y)$ and $T=\mathsf{k}_r-\kappa_r(Y)$). Since $\|S\|\stackrel{p}{\to} 0 $ and $\|T\|\stackrel{p}{\to} 0 $ by Lemma~\ref{lem:convergence} part 1, and because $A$ is fixed, the norm of each column converges to zero. In consequence, for each $A$, $\| \widehat G(A) - G(A)  \| \stackrel{p}{\to} 0$. Since $\cA$ is compact and $\widehat G(A) - G(A)$ is smooth, we conclude
\begin{equation}\label{eq:uniformder}
\sup_{A \in \mathcal A} \| \widehat G(A) - G(A)  \|\stackrel{p}{\to} 0.
\end{equation}

This establishes part (d). To establish part~(e) note that $W$ is positive definite and the Jacobian $G(Q A_0)$ has full column rank by Lemma~\ref{lem:fullrank} below.

\begin{lem}\label{lem:fullrank}
If $\cV$ assures identifiability up to a sign permutation matrix, then the matrix $G(QA_0)$ has full column rank for each $Q\in {\rm SP}(d)$.  
\end{lem}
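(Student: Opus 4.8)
The plan is to prove that the Jacobian $G(QA_0)$, viewed as a linear map $\R^{d\times d}\to \R^{d_g}$, has trivial kernel; since it has $d^2$ columns and $d_g\ge d^2$ rows, this is exactly the assertion of full column rank. By the description of $G_{S,T}(A)$ in the proof above (see \eqref{eq:jacqt} and \eqref{eq:jaccol}) with $S=h_2(Y)$ and $T=h_r(Y)$, the map $V\mapsto G(QA_0)\,{\rm vec}(V)$ sends a direction $V$ to the pair consisting of $K_{S,QA_0}(V)$ (restricted to the entries $i\le j$) and $\pi_{\cV}(K_{T,QA_0}(V))$ (restricted to $\cI$). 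So $V$ lies in the kernel precisely when both blocks vanish.

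The key step is a change of variables that converts this into the local identifiability criterion of Lemma~\ref{lem:isolder}. Since $QA_0\in {\rm GL}(d)$, I set $U:=V(QA_0)^{-1}$, which is a linear isomorphism with $V=0\iff U=0$. At $A=QA_0$ multilinearity gives $(QA_0)\bullet h_2(Y)=Q\bullet h_2(\varepsilon)=Q\bullet I_d=I_d$ and $(QA_0)\bullet h_r(Y)=Q\bullet h_r(\varepsilon)$. Applying \eqref{eq:KTAsimpl} to each block then yields $K_{S,QA_0}(V)=K_{I_d,I_d}(U)=U+U'$ and $K_{T,QA_0}(V)=K_{Q\bullet h_r(\varepsilon),\,I_d}(U)$. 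Hence $V\in\ker G(QA_0)$ if and only if $U$ is antisymmetric (the first block, being symmetric, vanishes exactly when $U+U'=0$) and $\pi_{\cV}\bigl(K_{Q\bullet h_r(\varepsilon),\,I_d}(U)\bigr)=0$.

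These are exactly the two conditions appearing in \eqref{eq:isol} of Lemma~\ref{lem:isolder} at the point $Q$ for the tensor $h_r(\varepsilon)$: antisymmetry of $U$ encodes that $U$ is a tangent direction to ${\rm O}(d)$ at the identity (equivalently $V$ lies in the tangent space to the variety cut out by the covariance constraint), and the second condition is the requirement $\pi_{\cV}(K_{Q\bullet h_r(\varepsilon),I_d}(U))=0$. By hypothesis $\cV$ identifies $A$ up to sign and permutation, i.e.\ $\cG_{h_r(\varepsilon)}(\cV)={\rm SP}(d)$, which is a \emph{finite} set, so every $Q\in{\rm SP}(d)$ is an isolated point of $\cG_{h_r(\varepsilon)}(\cV)$. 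Lemma~\ref{lem:isolder} (taking $\cU=\cV$) therefore guarantees that the two conditions above force $U=0$, and consequently $V=0$. Thus $\ker G(QA_0)=\{0\}$ and $G(QA_0)$ has full column rank for every $Q\in{\rm SP}(d)$, as claimed.

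The main obstacle is bookkeeping rather than conceptual: one must verify carefully that the inclusion of the second-order block $A\bullet h_2(Y)-I_d$ reproduces exactly the tangent-space condition $K_{I_d,Q}(V)=0$ of Lemma~\ref{lem:isolder} after the substitution $U=V(QA_0)^{-1}$, and that the base-point relation $(QA_0)\bullet h_r(Y)=Q\bullet h_r(\varepsilon)$ makes the tensor in the second block match. A secondary point to keep track of is that the argument is uniform in the choice $h=\mu$ or $h=\kappa$, since only multilinearity \eqref{eq:Amu} and the identity \eqref{eq:KTAsimpl} are used, so no separate treatment of moments versus cumulants is needed.
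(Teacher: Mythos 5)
Your proof is correct and follows essentially the same route as the paper's: reduce to showing the derivative of $g$ at $QA_0$ has trivial kernel, substitute $U=V(QA_0)^{-1}$ via \eqref{eq:KTAsimpl} so the two blocks become $K_{I_d,I_d}(U)=0$ and $\pi_{\cV}\bigl(K_{Q\bullet h_r(\varepsilon),I_d}(U)\bigr)=0$, and invoke Lemma~\ref{lem:isolder} together with the isolatedness that follows from $\cG_{h_r(\varepsilon)}(\cV)={\rm SP}(d)$ being finite. The only (immaterial) difference is that you read off the conditions at the point $Q$ with tensor $h_r(\varepsilon)$, whereas the paper reads them at $I_d$ with the translated tensor $Q\bullet\kappa_r(\varepsilon)$ --- the same statement after the group translation --- and your bookkeeping (e.g.\ writing $V(QA_0)^{-1}$ rather than the paper's $A_0^{-1}Q'V$) is in fact the more careful of the two.
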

\begin{proof}
It is enough to show that the derivative of $g(A)$ at $QA_0$ has trivial kernel. We first analyze the $S^2(\R^d)$-part of the derivative noting that $\mu_2(Y) = \kappa_2(Y)$ as $\E Y = 0$. Suppose $(QA_0)\bullet \kappa_2(Y)=I_d$ and so the condition $(V,QA_0)\bullet \kappa_2(Y)+(QA_0,V)\bullet \kappa_2(Y)=0$ is equivalent to
$$
(A_0^{-1}Q'V,I_d)\bullet I_d+(I_d,A_0^{-1}Q'V)\bullet I_d\;=\;0.
$$
Using the derivative $K_{S,A}$ notation given in \eqref{eq:jacqt}, we write this last condition as $K_{I_d,I_d}(A_0^{-1}Q'V)=0$. Similarly, the $\cV^\perp$-part implies that $K_{T,I_d}(A_0^{-1}Q'V)=0$ with $T=\kappa_r(Y)$.  This implies that $A_0^{-1}Q'V=0$ by  Lemma~\ref{lem:isolder} and the fact that $I_d$ is an isolated point of $\cG_T$. We conclude that $V$ must be zero.
\end{proof}
Having verified all conditions of \ref{th:NMFasnorm} we can apply the theorem to prove the first display in Proposition \ref{prop:assnormal}. The second display follows as a special case when taking $W_n = \widehat \Sigma_n^{-1}$, noting that $\widehat \Sigma_n^{-1} \to \Sigma^{-1}$, and replacing $W$ by $\Sigma^{-1}$ in the first display.

\section{Additional motivation}\label{appsec:addmotivation} 

In this section we discuss some additional relations that aim to further highlight the usefulness of the identification results presented in the main text. 

\subsection{Scaled Elliptical LiNGAM} 

For the model $AY = \varepsilon$, where the elements of $\varepsilon$ are independent and non-Gaussian \cite{Shimizu.06}, showed that one can uniquely recover $A$ if there exists an (unknown) permutation of the rows of $A$ that is lower triangular, i.e. the model corresponds to a directed acyclic graph. The proposed LiNGAM discovery algorithm uses ICA and a search over permutations to find the best fitting lower triangular model. 

Now reconsider the multiple scaled elliptical components model
$$
	AY \;=\; \varepsilon~, \quad \text{with} \quad \varepsilon\;=\;\tau \odot U \quad \text{and} \quad U \sim \cU_d~, 
$$
with $\tau \in \R^d$ and $U$ independent.

With elliptical errors the LiNGAM algorithm can no longer be used. However, the results of this paper suggests a natural modification where the ICA algorithm is replaced by the moment or cumulant based estimation methods that we introduce in Section~\ref{sec:inference}. These methods are build on the new identification results for the multiple scaled elliptical components model.  

Specifically, in Algorithm A of \cite{Shimizu.06} one can replace the ICA method that is used in step 1 by the minimum distance moment/cumulant estimation method of Section~\ref{sec:inference}. The other steps of the algorithm do not require adjustment.

\subsection{Invariance} 

In Section~\ref{sec:motivation} we motivated non-independent component models using specific examples (e.g. common variance model) as well as by relaxing independence (e.g. mean independence). In both cases the resulting model still implied sufficient zero restrictions on the higher order moments/cumulants of $\varepsilon$ to ensure the identifiability of $A$ (cf Corollaries \ref{cor:maindiag} and \ref{cor:mainreflec}). Here we briefly show that such zero restrictions can also arise from invariance properties of the distribution of $\varepsilon$. 

Suppose that the distribution of $\varepsilon$ is the same as the distribution of $D\varepsilon$ for \emph{every} diagonal matrix $D$ with $D_{ii}=\pm 1$ for all $i=1,\ldots,d$ (e.g. when $\varepsilon$ has spherical distribution). In this case, by multilinearity of cumulants,
$$
[h_r(D\varepsilon)]_{i_1\cdots i_r}\;=\;D_{i_1 i_1}\cdots D_{i_r i_r} [h_r(\varepsilon)]_{i_1\cdots i_r}~.
$$
Since $D$ is arbitrary, $ [\kappa_r(\varepsilon)]_{i_1\cdots i_r}$ must be zero unless all indices appear even number of times. In particular, if $r$ must be even and for example, if $r=4$, the only potentially non-zero cumulants are $\kappa_{iiii}$ and $\kappa_{iijj}$. These zero patterns correspond exactly with the reflectionally invariant restrictions introduced in Section~\ref{sec:reflectionally} and as such Corollary \ref{cor:mainreflec} also ensure the identifiability of $A$ in $AY = \varepsilon$ when the distribution of $\varepsilon$ is the same as the distribution of $D\varepsilon$. 


\subsection{Alternative estimation methods} 

In the main text we outlined some minimum distance estimation methods for estimating $A$ in $AY = \varepsilon$ based on the identifying moment/cumulant restrictions. We adopted this approach as it can be implemented naturally based on our identification results. That said, for specific non-independent components models it is obviously feasible to develop alternative estimators based on the identification results. To illustrate, we discuss some approaches for the mean independent components model:  
$$
	a'_iY \;=\; \varepsilon_i~, \quad \text{with} \quad \mathbb E(\varepsilon_i | \varepsilon_{-i} ) = 0~, \quad \text{for} \ \  i=1, \ldots,d~. 
$$
\cite{ShaoZhang.14} introduce \emph{martingale difference correlations} to measure the departure of conditional mean independence between a scalar response variable (i.e. $\varepsilon_i$) and a vector predictor variable (i.e. $\varepsilon_{-i}$). This metric is a natural extension of distance correlation proposed by \cite{SzekelyRizzoBakirov.07}, which was adopted in \cite{MT17} for independent components analysis. These observations immediately suggest that jointly minimizing the martingale difference correlations between $\varepsilon_i$ and $\varepsilon_{-i}$ for all $i$ with respect to $A$ provides an attractive approach for estimating mean independent components models. 

Alternatively, recall that the efficient ICA method of \cite{chenbickel} is based on setting the efficient score function of the semi-parametric ICA model (with independent errors) to zero. The analytical form of these efficient scores relies on the independence assumption. When relaxing towards mean independence it is straightforward to derive a new analytical expression for the efficient scores and apply the algorithm of \cite{chenbickel} to set these scores to zero.

\section{Local identification beyond signed-permutations}\label{appsec:localident}

The results in Section~\ref{sec:mainres} stipulate conditions on moment tensors $ T = \mu_r(\varepsilon)$ or cumulant tensors $T = \kappa_r(\varepsilon)$ for which $A$ can be recovered up to sign and permutation. This section gives minimal conditions on $\mathcal V$ that ensure that $\mathcal G_T$ is finite. We subsequently use this result to highlight the gap that exists between restrictions that lead to finite sets and restrictions that lead to signed permutation sets. This finding has the important implication that it is in general not sufficient to prove that the Jacobian of the moment or cumulant restrictions is full rank in order to establish that the identified set is equal to the set of signed permutations.

Let $\cV\subset S^r(\R^d)$ be a set given as a set of zeros of a system of polynomials in the coordinated of $S^r(\R^d)$ (such set is called an algebraic variety). A subset $\cU\subseteq \cV$ is Zariski open in $\cV$ if the complement $\cV\setminus \cU$ is also an algebraic variety. In particular, a Zariski open set is also open in the classical topology. For example, the set of diagonal tensors in $S^r(\R^d)$ with at most one zero on the diagonal forms a Zariski open subset of the set of diagonal tensors. Similarly, the set of reflectionally invariant tensors satisfying the genericity condition \eqref{eq:genrefl} is Zariski open in the set of reflectionally invariant tensors.  Note that, in both cases, the constraints defining $\cV$ and $\cV\setminus \cU$ were linear.

Recall from \eqref{eq:GT} that for $T=h_r(\varepsilon)\in \cU$ we define $\cG_T(\cU)=\{Q\in {\rm O}(d):\;Q\bullet T\in \cU\}$ to be the set of all orthogonal matrices for which $h_r(Q\varepsilon)$ also lies in $\cU$. \begin{defn}\label{def:locid}
The problem of recovering $A$ in \eqref{nica} is locally identifiable under moment/cumulant constraints $\cU\subseteq \cV\subset S^r(\R^d)$ with $\cU$ open in $\cV$ if every point of $\cG_T(\cU)$ is an isolated point of $\cG_T(\cU)$. \end{defn}

Note that, at least in principle, $\cG_T(\cU)$ could contain infinitely many isolated points. The following result establishes link between local identification and finiteness of $\cG_T$. 
\begin{prop}\label{prop:finiteisol}
Let $\cU$ be a Zariski open subset of $\cV$. For $T^*\in \cU$ we have $|\cG_{T^*}(\cU)|<\infty$ if and only if each point of $\cG_{T^*}(\cU)$ is an isolated point of $\cG_{T^*}(\cU)$.
\end{prop}
\begin{proof}
The right implication is clear. For the left implication first note that $\cG_{T^*}(\cU)$ is a Zariski open subset of the real algebraic variety $\cG_{T^*}(\cV)$. Indeed, if $f_1(T)=\cdots=f_k(T)=0$ are the polynomials, in $T$, describing $\cV$ then the polynomials, in $Q$,  describing $\cG_{T^*}(\cV)$ within ${\rm O}(d)$ are $f_1(Q\bullet {T^*})=\cdots =f_k(Q\bullet {T^*})=0$. Similarly, if $\cV\setminus \cU$ is described within $\cV$ by $g_1(T)=\cdots=g_l(T)=0$. Then $\cG_{T^*}(\cV)\setminus \cG_{T^*}(\cU)$ is described by  $g_1(Q\bullet {T^*})=\cdots=g_l(Q \bullet {T^*})=0$.

Since $\cG_{T^*}(\cV)$ is a real algebraic variety,  the set of its isolated points is equal to its zero-dimensional components and so it must be finite; see for example Theorem~4.6.2 in \cite{cox2013ideals}. It is then enough to show that if $Q^\circ$ is isolated in $\cG_{T^*}(\cU)$ then it must be isolated in $\cG_{T^*}(\cV)$. Suppose that $Q^\circ\in \cG_{T^*}(\cU)$ is not isolated in $\cG_{T^*}(\cV)$. Then it must lie on an irreducible component of the variety $\cG_{T^*}(\cV)$ of a positive dimension. By assumption, for this $Q^\circ$,  $g_1(Q^\circ\bullet {T^*})\neq 0$, \ldots, $g_l(Q^\circ \bullet {T^*})\neq 0$. Thus, in any sufficiently small neighbourhood of $Q^\circ$ there will be a point that lies in $\cG_{T^*}(\cV)$ and $g_1,\ldots,g_l$ evaluate to something non-zero. In other words,  in any sufficiently small neighbourhood of $Q^\circ$ there is a point in $\cG_{T^*}(\cU)$ proving that $Q^\circ$ cannot be isolated in $\cG_{T^*}(\cU)$, which leads to contradiction. 
\end{proof}

\begin{rem}
The proof of Proposition~\ref{prop:finiteisol} also shows that if $\cU$ is a Zariski open subset of $\cV$ and $T\in \cU$ then $\cG_{T}(\cU)$ is a Zariski open subset of $\cG_{T}(\cV)$. Moreover, $Q\in \cG_{T}(\cU)$ is isolated if and only if it is isolated in $\cG_{T}(\cV)$.
\end{rem}



 {By the above remark, to show local identifiability it is enough to show that every element of $\cG_T(\cU)$ is isolated in $\cG_T(\cV)$. To show this, we take any point in $\cG_T(\cU)$ and try to perturb it infinitesimally staying in the orthogonal group. We need that every such infinitesimal perturbation sends the point outside of $\cG_T(\cV)$.}

\begin{lem}
For a fixed $T\in S^r(\R^d)$, consider the map from $\R^{d\times d}$ to $S^r(\R^d)$ given by $A\mapsto A\bullet T$. Its derivative at $A$ is a linear mapping on $\R^{d\times d}$ defined by
\begin{equation}\label{eq:jacqt}
K_{T,A}(V)\;=\; (V,A,\ldots,A)\bullet T+\cdots +(A,\ldots,A,V)\bullet T.
\end{equation}
Moreover, if $A$ is invertible, then
\begin{equation}\label{eq:KTAsimpl}
K_{T,A}(V)\;=\;K_{A\bullet T,I_d}(V A^{-1}).
\end{equation}
\end{lem}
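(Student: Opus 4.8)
The plan is to prove both assertions by reducing everything to the general multilinear notation of \eqref{eq:genermlin} together with one elementary functoriality property of that action. Throughout I would use the identification $A\bullet T=(A,\ldots,A)\cdot T$, i.e.\ $A\bullet T$ is the special case of \eqref{eq:genermlin} in which the same matrix acts on every mode.

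For the first claim I would compute the directional derivative of $A\mapsto A\bullet T$ at $A$ in direction $V$ as $\frac{\d}{\d t}\big|_{t=0}(A+tV)\bullet T$. Writing $(A+tV)\bullet T=((A+tV),\ldots,(A+tV))\cdot T$ and using that the action $(\,\cdot\,)$ is linear in each of its $r$ matrix slots, I expand the product and collect the coefficient of $t^1$; this replaces exactly one copy of $A$ by $V$, one slot at a time, yielding
\[
\sum_{k=1}^{r}(A,\ldots,A,V,A,\ldots,A)\cdot T
\]
with $V$ in the $k$-th slot, which is precisely $K_{T,A}(V)$. Since $A\mapsto A\bullet T$ is a polynomial (hence smooth) map and $V\mapsto K_{T,A}(V)$ is manifestly linear, this directional derivative is the Fréchet derivative, establishing \eqref{eq:jacqt}.

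For the second claim I would first record the composition (functoriality) rule for the general action: for matrices $B_1,\ldots,B_r$ and $C_1,\ldots,C_r$,
\[
(B_1,\ldots,B_r)\cdot\big((C_1,\ldots,C_r)\cdot T\big)\;=\;(B_1C_1,\ldots,B_rC_r)\cdot T,
\]
which follows from a one-line index computation using \eqref{eq:genermlin}. Now take $A$ invertible, set $W=VA^{-1}$, and expand the right-hand side $K_{A\bullet T,I_d}(W)$ via the definition \eqref{eq:jacqt} at base point $I_d$. Its $k$-th summand is $(I_d,\ldots,W,\ldots,I_d)\cdot(A\bullet T)$ with $W$ in slot $k$; substituting $A\bullet T=(A,\ldots,A)\cdot T$ and applying the composition rule collapses this to $(A,\ldots,WA,\ldots,A)\cdot T$ with $WA$ in slot $k$. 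Because $WA=VA^{-1}A=V$, the $k$-th summand equals $(A,\ldots,V,\ldots,A)\cdot T$, i.e.\ exactly the $k$-th summand of $K_{T,A}(V)$. Summing over $k$ gives \eqref{eq:KTAsimpl}.

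Neither step involves a genuine obstacle. The only points demanding care are the order of multiplication in the composition rule---the outer matrix $W$ multiplies the inner matrix $A$ on the \emph{left}, so slot $k$ becomes $WA$ and not $AW$---and the observation that invertibility of $A$ enters only to make the expression $VA^{-1}$ well defined, after which \eqref{eq:KTAsimpl} holds for every $V$.
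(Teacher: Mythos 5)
Your proposal is correct and follows essentially the same route as the paper: the first claim by expanding $(A+tV)\bullet T$ via multilinearity and reading off the linear term in $t$, and the second claim by direct calculation. The paper dismisses the second claim with the phrase ``follows by direct calculation,'' whereas you supply that calculation explicitly via the composition rule $(B_1,\ldots,B_r)\cdot\bigl((C_1,\ldots,C_r)\cdot T\bigr)=(B_1C_1,\ldots,B_rC_r)\cdot T$, correctly handling the left-multiplication order so that slot $k$ becomes $WA=V$; this is a faithful filling-in of the omitted details rather than a different argument.
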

\begin{proof}
For any direction $V\in \R^{d\times d}$,  we have 
\begin{eqnarray*}
(A+t V)\bullet T&-&A\bullet T\\
&=&t(V,A,\ldots,A)\bullet T+\cdots +t(A,\ldots,A,V)\bullet T + o(t). 
\end{eqnarray*}
So the proof of the first claim follows by the definition of a derivative. The second claim follows by direct calculation.
\end{proof}


For a given linear subspace $\cV\subseteq S^r(\R^d)$, let $\pi_{\cV}:S^r(\R^d)\to \cV^\perp$ denote the orthogonal projection on $\cV^\perp$. Of course, $T\in \cV$ if and only if $\pi_{\cV}(T)=0$. Moreover, if $\cV=\cV(\cI)$ is given by zero constraints, then $\pi_\cV(T)$ simply gives the coordinates $T_{\bs i}$ for $\bs i\in \cI$.

In the next result, $K_{I_d,A}(V)=(V,A)\bullet I_d+(A,V)\bullet I_d$, which is a special instance of \eqref{eq:jacqt} for $r=2$. 

\begin{lem}\label{lem:isolder}
Let $\cU$ be a Zariski open subset of $\cV$. A point $Q$ is an isolated point of $\cG_T(\cU)$ if and only if  
\begin{equation}\label{eq:isol}
K_{I_d,Q}(V)=0\mbox{ and }\pi_{\cV}( K_{T,Q}(V))=0\qquad\mbox{implies } V=0.
\end{equation}
\end{lem}
\begin{proof}
Since,
$$
(Q+tV)(Q+tV)'=I_d+t (VQ'+QV')+o(t),
$$
$V$ is a direction in the tangent space to ${\rm O}(d)$ at $Q$ if and only if $VQ'+QV'=0$. Equivalently,
$$
VQ'+QV'=(V,Q)\bullet I_d+(Q,V)\bullet I_d=K_{I_d,Q}(V)=0.
$$
Thus, the first condition $K_{I_d,Q}(V)=0$ simply restates that $V$ lies in the tangent space of ${\rm O}(d)$ at $Q$.

The proof of Proposition~\ref{prop:finiteisol} showed that, $\cU\subseteq \cV$ is Zariski open, then $\cG_T(\cU)$ is Zariski open (and so also open in the classical topology) in $\cG_T(\cU)$. Thus, if $Q$ is not isolated, every neighborhood of $Q$ must contain an element in $\cG_T(\cU)$ different than $Q$. In other words, the point $Q\in \cG_T(\cU)$ is not isolated if and only if there exists a tangent direction $V\neq 0$ such that 
$$
\pi_{\cV}((Q+tV)\bullet T)-\pi_{\cV}(Q\bullet T)\;=\;\pi_{\cV}((Q+tV)\bullet T)\;=\; o(t).
$$
Taking the limit $t\to 0$, we get that equivalently $\pi_\cV(K_{T,Q}(V))=0$. This shows that $Q$ is isolated if and only if no such non-trivial tangent direction exists. 
\end{proof}

\begin{rem}
In the examples of Section~\ref{sec:mainres}, for $T\in \cU\subseteq \cV$, we always had $\cG_T(\cU)=\cG_T(\cV)={\rm SP}(d)$. The proof of Proposition~\ref{prop:finiteisol} suggests that, at least in principle $\cG_T(\cU)$ could be finite but $\cG_T(\cV)$ could have components of positive dimension. In the proof of the next result, we crucially rely on the fact that we compute $\cG_T(\cU)$ rather than $\cG_T(\cV)$.
\end{rem}

Note that the dimension of the orthogonal group ${\rm O}(d)$ is $\binom{d}{2}$, which is then also the minimal number of constraints that need to be imposed in order to hope for identifiability. The main result of this section studies local identifiability with a model defined by the minimal number of $\binom{d}{2}$ constraints with
$$
\cI\;=\;\{(i,j,\ldots,j):\;\;1\leq i<j\leq d\}.
$$
We write $\cV^\circ=\cV(\cI)$. Denote 
\begin{equation}\label{eq:Bjkl}
B^{(j)}\;=\;[T_{klj\cdots j}]_{k,l<j}\;\in\; S^2(\R^{j-1})
\end{equation} and define $\cU^\circ\subset S^r(\R^d)$ as the set of tensors $T\in \cV^\circ$ such that, 
\begin{equation}\label{eq:genericirc}
\det\left(T_{j\cdots j}I_{j-1}-(r-1)B^{(j)}\right)\neq 0\quad\mbox{for all }j=2,\ldots,d.
\end{equation}
\begin{thm}\label{th:localijj}
If $T\in \cU^\circ$ then $|\cG_T(\cU^\circ)|<\infty$.
\end{thm}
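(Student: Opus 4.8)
The plan is to establish \emph{local identifiability} in the sense of Definition~\ref{def:locid} and then apply Proposition~\ref{prop:finiteisol}. By that proposition it suffices to show that every $Q\in\cG_T(\cU^\circ)$ is an isolated point, and by Lemma~\ref{lem:isolder} this reduces to proving that the only tangent direction $V$ satisfying $K_{I_d,Q}(V)=0$ together with $\pi_{\cV^\circ}(K_{T,Q}(V))=0$ is $V=0$. First I would normalize by setting $S:=Q\bullet T$. Since $Q\in\cG_T(\cU^\circ)$ we have $S\in\cU^\circ$; in particular $S\in\cV^\circ$, so $S_{kj\cdots j}=0$ for $k<j$, and $S$ satisfies the genericity condition \eqref{eq:genericirc} with $B^{(j)}$ and the diagonal entries now formed from $S$. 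Using \eqref{eq:KTAsimpl} and $Q^{-1}=Q'$, the substitution $W:=VQ'$ gives $K_{T,Q}(V)=K_{S,I_d}(W)$, while the tangent-space equation $K_{I_d,Q}(V)=VQ'+QV'=0$ becomes $W+W'=0$, i.e. $W$ is skew-symmetric. As $V=WQ$, it is then enough to force $W=0$.

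Next I would compute the projected equations explicitly. For $\bs i=(i,j,\ldots,j)\in\cI$ (the index $i$ once and $j$ repeated $r-1$ times, with $i<j$), expanding \eqref{eq:jacqt} and using the symmetry of $S$ yields
\begin{equation*}
[K_{S,I_d}(W)]_{ij\cdots j}\;=\;\sum_{k}W_{ik}S_{kj\cdots j}\;+\;(r-1)\sum_{k}W_{jk}S_{ikj\cdots j},
\end{equation*}
where the factor $r-1$ collects the $r-1$ repeated slots carrying the index $j$. The condition $\pi_{\cV^\circ}(K_{S,I_d}(W))=0$ sets each of these $\binom{d}{2}$ expressions to zero. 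Because $S_{kj\cdots j}=0$ whenever $k<j$, the first sum collapses to $W_{ij}S_{j\cdots j}+\sum_{k>j}W_{ik}S_{kj\cdots j}$; in the second sum the terms with $k<j$ produce exactly the block entry $B^{(j)}_{ik}=S_{ikj\cdots j}$ of \eqref{eq:Bjkl}, the term $k=j$ vanishes since $W_{jj}=0$, and only the terms with $k>j$ remain.

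The decisive step is a top-down elimination on $j$. For $j=d$ there are no indices $k>d$, so the equations for the pairs $(i,d)$, $i<d$, read $(S_{d\cdots d}I_{d-1}-(r-1)B^{(d)})w^{(d)}=0$ with $w^{(d)}=(W_{kd})_{k<d}$, after using $W_{dk}=-W_{kd}$. Genericity \eqref{eq:genericirc} at $j=d$ makes this matrix invertible, forcing $W_{kd}=0$ for all $k<d$. I would then run a descending induction on $j$: once all entries $W_{\cdot m}$ with $m>j$ are known to vanish, every coupling term with $k>j$ in the equations for the pairs $(i,j)$ drops out, since each such term carries a factor $W_{ik}$ or $W_{jk}$ with $k>j$. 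What remains is precisely $(S_{j\cdots j}I_{j-1}-(r-1)B^{(j)})w^{(j)}=0$ with $w^{(j)}=(W_{kj})_{k<j}$, which is again invertible by \eqref{eq:genericirc}. Hence $W_{kj}=0$ for all $k<j$, and descending to $j=2$ gives $W=0$, so $V=0$ and $Q$ is isolated; finiteness then follows from Proposition~\ref{prop:finiteisol}.

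I expect the main obstacle to lie in the bookkeeping of the derivative computation: carefully expanding $[K_{S,I_d}(W)]_{ij\cdots j}$, tracking which entries of $S$ are forced to zero by membership in $\cV^\circ$, and confirming that the coupling terms with $k>j$ genuinely decouple under the descending induction so that each stage is governed exactly by the matrix $S_{j\cdots j}I_{j-1}-(r-1)B^{(j)}$ featured in the genericity condition \eqref{eq:genericirc}.
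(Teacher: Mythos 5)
Your proposal is correct and follows essentially the same route as the paper's proof: reduction to isolated points via Proposition~\ref{prop:finiteisol}, the tangent-space criterion of Lemma~\ref{lem:isolder}, the substitution $U=VQ'$ with $S=Q\bullet T\in\cU^\circ$ via \eqref{eq:KTAsimpl}, and the same descending elimination on $j$ governed by the matrices $S_{j\cdots j}I_{j-1}-(r-1)B^{(j)}$ from \eqref{eq:genericirc}. If anything, your bookkeeping is slightly more explicit than the paper's displayed computation, since you track the coupling terms $\sum_{k>j}W_{ik}S_{kj\cdots j}$ (which are not forced to vanish by membership in $\cV^\circ$ alone) and note that they only disappear under the descending induction hypothesis, exactly as the paper's induction implicitly uses.
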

\begin{proof}By Proposition~\ref{prop:finiteisol} it is enough to show that each point of $\cG_T(\cU^\circ)$ is isolated. By Lemma~\ref{lem:isolder}, equivalently for every $Q\in \cG_T(\cU^\circ)$, if $K_{I_d,Q}(V)=0$ and $\pi_\cV(K_{T,Q}(V))=0$ then $V=0$. By \eqref{eq:KTAsimpl},
$K_{I_d,Q}(V)\;=\;K_{I_d,I_d}(VQ')$. Thus, denoting $U=VQ'$, this condition is equivalent to saying that $U$ antisymmetric ($U+U'=0$). We will show that the conditions above imply that $U$ must be zero. By assumption, we have $U_{ii}=0$ and $U_{ij}=-U_{ji}$ for all $i\neq j$. Again using \eqref{eq:KTAsimpl}, we get $\pi_\cV(K_{T,Q}(V))=\pi_{\cV}(K_{Q\bullet T,I_d}(U))$. Denote $S:=Q\bullet T$. Since $Q\in \cG_T(\cU^\circ)$, in particular, $S\in \cU^\circ$. The condition $\pi_\cV(K_{S,I_d}(U))=0$ means that for every $\bs i=(i,j,\ldots,j)$ with $i<j$, $(K_{S,I_d}(U))_{ij\cdots j}=0$. More explicitly,
\begin{eqnarray*}
0&=&\sum_{l=1}^d U_{il}S_{lj\cdots j}+\sum_{l=1}^d U_{jl}S_{ilj\cdots j}+\cdots +\sum_{l=1}^d U_{jl}S_{ij\cdots jl}\\
&=&U_{ij}S_{j\cdots j}+(r-1)\sum_{l=1}^d U_{jl}S_{ilj\cdots j}\\
&=&-U_{ji}S_{j\cdots j}+(r-1)\sum_{l=1}^d U_{jl}S_{ilj\cdots j}
\end{eqnarray*}
Let $u_j=(U_{j\,1},\ldots,U_{j \,j-1})$ for $j=2,\ldots,d$. Let first $j=d$. Using the matrix $B^{(d)}$ defined in \eqref{eq:Bjkl}  the equation above gives
$$
\left(S_{d\cdots d}I_{d-1}-(r-1)B^{(d)}\right)u_d\;=\;0.
$$
This has a unique solution $u_d=0$ if and only if 
$\det(S_{d\cdots d}I_{d-1}-(r-1)B^{(d)})\neq 0$, which holds by \eqref{eq:genericirc}. We have shown that  the last row of $U$ is zero. Now suppose that we have established that the rows $j+1,\ldots,d$ of $U$ are zero. If $j=1$, we are done by the fact that $U$ is antisymmetric. So assume $j\geq 2$. We will use the fact that $U_{jl}=0$ if $l\geq j$. For every $i<j$
$$
0=-U_{ji}S_{j\cdots j}+(r-1)\sum_{l\neq j} U_{jl}B^{(j)}_{il}=-U_{ji}S_{j\cdots j}+(r-1)\sum_{l< j} B^{(j)}_{il}U_{lj}.
$$
This again has a unique solution if and only if 
$	\det(S_{j\cdots j}I_{j-1}-(r-1)B^{(j)})\neq0$, which holds by \eqref{eq:genericirc}. Using a recursive argument, we conclude that $U=0$. 
\end{proof}



\begin{ex}\label{ex:simple3}
Consider $\cV^\circ\subseteq S^3(\R^2)$ given by $T_{122}=0$. Direct calculations show that, for any given generic $T$, there are $12$ orthogonal matrices such that $Q\bullet T\in \cV$. There are four elements given by the diagonal matrices together with $8$ additional elements that depend on $T$. So, for example, if $T_{111}=1$, $T_{222}=2$, and $T_{112}=3$ then the twelve elements are the four matrices $D$ and eight matrices of the form
$$
\frac{1}{5}D\begin{bmatrix}
3 & 4\\
4 & -3
\end{bmatrix}\qquad\mbox{ and }\qquad \frac{1}{\sqrt{2}}D\begin{bmatrix}
1 & 1\\
1 & -1
\end{bmatrix}.
$$ 
Going back to our original motivation, suppose $\varepsilon$ is a two-dimensional mean-zero random vector with ${\rm var}(\varepsilon)=I_2$. If we impose in addition that $\E \varepsilon_1\varepsilon^2_2=0$, then, even if we impose some genericity conditions, the matrix $A$ in \eqref{nica} is identified only up to the set of $12$ elements. Moreover, as illustrated above, these elements may look nothing like $A$ in the sense that they are not obtained by simple row permutation and sign swapping.  
\end{ex}




\begin{rem}
The set $\cG_T(\cU^\circ)$ is finite but, as illustrated by Example~\ref{ex:simple3}, it typically contains matrices that do not have an easy interpretation. In particular, if $d=2$ then $\cV^\circ$ is given by a single constraint $T_{12\cdots 2}=0$. In this case we can show that there are generically $4r$ \emph{complex} solutions (which generalized the number $12$ in the above example). There are $4$ solutions given by the elements of $\Z^2_2$ and $4(r-1)$ extra solutions, which do not have any particular structure. 
\end{rem}

We conclude the following result.
\begin{thm}\label{thm:localmomcum}
Consider the model \eqref{nica} with $\E \varepsilon =0$, ${\rm var}(\varepsilon)=I_d$ and suppose that either $\mu_r(\varepsilon)\in \cU^\circ$ or $\kappa_r(\varepsilon)\in \cU^\circ$. Then $A$ is locally identifiable. 
\end{thm}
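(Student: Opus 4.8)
The plan is to deduce this statement directly from the two substantive ingredients already in place, namely the finiteness result of Theorem~\ref{th:localijj} and the finiteness--isolation equivalence of Proposition~\ref{prop:finiteisol}, while unifying the moment and cumulant cases through multilinearity.

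First I would observe that the entire identification analysis depends on $h_r$ only through the multilinearity relation \eqref{eq:Amu}: for $Q\in {\rm O}(d)$ one has $(QA)\bullet h_r(Y)=Q\bullet(A\bullet h_r(Y))=Q\bullet T$, where $T=h_r(\varepsilon)$. Since this holds verbatim for $h_r=\mu_r$ and for $h_r=\kappa_r$ (cf.\ Remark~\ref{rem:multilin}), both hypotheses $\mu_r(\varepsilon)\in\cU^\circ$ and $\kappa_r(\varepsilon)\in\cU^\circ$ reduce to the single assumption $T\in\cU^\circ$, and there is no need to treat the two cases separately.

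Next I would verify that $\cU^\circ$ is a Zariski open subset of $\cV^\circ=\cV(\cI)$, as this is the standing hypothesis of Proposition~\ref{prop:finiteisol}. This is immediate: $\cV^\circ$ is cut out inside $S^r(\R^d)$ by the linear equations $T_{ij\cdots j}=0$ for $i<j$, while $\cU^\circ$ is obtained from $\cV^\circ$ by deleting the locus on which at least one of the polynomials $\det(T_{j\cdots j}I_{j-1}-(r-1)B^{(j)})$ vanishes. This deleted locus is Zariski closed, so $\cU^\circ$ is Zariski open in $\cV^\circ$. With this confirmed, Theorem~\ref{th:localijj} gives $|\cG_T(\cU^\circ)|<\infty$, and Proposition~\ref{prop:finiteisol} then upgrades finiteness to the statement that every point of $\cG_T(\cU^\circ)$ is isolated in $\cG_T(\cU^\circ)$.

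Finally I would simply invoke Definition~\ref{def:locid}: the property that every point of $\cG_T(\cU^\circ)$ is isolated is exactly the definition of $A$ being locally identifiable under the constraints $\cU^\circ\subseteq\cV^\circ$, and Proposition~\ref{prop:mainreduction} certifies that determining $\cG_T(\cU^\circ)$ is what pins down $A$ in \eqref{nica}. I do not expect a genuine obstacle here, since all the analytic content sits in Theorem~\ref{th:localijj} (whose proof establishes the required Jacobian nonsingularity); the only points demanding care are confirming that the genericity conditions \eqref{eq:genericirc} are Zariski open, so that Proposition~\ref{prop:finiteisol} applies, and recording that the reduction to a single tensor $T$ is uniform across moments and cumulants.
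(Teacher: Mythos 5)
Your proposal is correct and follows exactly the route the paper intends: Theorem~\ref{th:localijj} gives $|\cG_T(\cU^\circ)|<\infty$, Proposition~\ref{prop:finiteisol} converts finiteness into isolation of every point, and Definition~\ref{def:locid} then yields local identifiability, with the moment and cumulant cases handled uniformly since both tensors satisfy the multilinearity relation \eqref{eq:Amu}. The paper states this theorem with no separate proof (``We conclude the following result''), and your added checks --- that \eqref{eq:genericirc} defines a Zariski open subset of $\cV^\circ$ and that the reduction to a single tensor $T$ is uniform --- are exactly the details it leaves implicit.
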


\section{Moments and Cumulants  --- some useful properties}\label{appsec:cumulant}

We collect some results on moments and cumulants and their sample estimates that are used below for some of the proofs. 

\subsection{Combinatorial relationship between moments and cumulants}\label{app:cumulcomb}

Let $\bs\Pi_r$ be the poset of all set partitions of $\{1,\ldots,r\}$ ordered by refinement. For $\pi\in \bs\Pi_r$ we write $B\in \pi$ for a block in $\pi$. The number of blocks of $\pi$ is denoted by $|\pi|$. For example, if $r=3$ then $\Pi_3$ has 5 elements: $123$, $1/23$, $2/13$, $3/12$, $1/2/3$. They have 1, 2, 2, 2, and 3 blocks respectively. If $\bs i=(i_1,\ldots,i_r)$ then  $\bs i_B$ is a subvector of $\bs i$ with indices corresponding to the block $B\subseteq \{1,\ldots,r\}$. For any multiset $\{i_1, \ldots, i_r\}$ of the indices $\{1,\ldots,d\}$ we can relate the moments $\mu_r(Y)$ to the cumulants \citep[e.g.][]{speed1983cumulants}. 
\begin{equation}\label{eq:momink}
	[\mu_r(Y)]_{i_1, \ldots, i_r } 	= \sum_{\pi \in \Pi_r} \prod_{B \in \pi} [\kappa_{|B|}(Y)]_{\bs i_B} ~, 
\end{equation}
where $B$ loops over each block in a given partition $\pi$. For instance, for $r=3$ we have 
\[
[\mu_r(Y)]_{i_1, i_2, i_3 } = \kappa_{i_1 i_2 i_3} + \kappa_{i_1 i_2} \kappa_{i_3} + \kappa_{i_1 i_3} \kappa_{i_2} + \kappa_{i_2 i_3} \kappa_{i_1}  + \kappa_{i_1} \kappa_{i_2} \kappa_{i_3}~,
\] 
where we use the more convenient notation $\kappa_{i_1 \ldots i_l} = [\kappa_l(Y)]_{i_1 \ldots i_l}$. Similarly, from \cite{speed1983cumulants} we have 
\begin{equation}\label{eq:kinmom}
	[\kappa_r(Y)]_{i_1, \ldots, i_r} = \sum_{\pi \in \Pi_r} (-1)^{|\pi|-1} (|\pi| - 1)! \prod_{B \in \pi} [\mu_{|B|}(Y)]_{\bs i_B}~.	
\end{equation}
For example, 
$$
[\kappa_r(Y)]_{i_1, i_2, i_3}\;=\;\mu_{i_1 i_2 i_3}-\mu_{i_1}\mu_{i_2 i_3}-\mu_{i_2}\mu_{i_1 i_3}-\mu_{i_3}\mu_{i_1 i_2}+2\mu_{i_1}\mu_{i_2}\mu_{i_3}~,
$$
using $\mu_{i_1 \ldots  i_l} = [\mu_l(Y)]_{i_1 \ldots  i_l}$. 

The coefficients $(-1)^{|\pi|-1}(|\pi|-1)!$ in \eqref{eq:kinmom} have an important combinatorial interpretation, which we now briefly explain. If $\bs P$ is a finite partially ordered set (poset) with ordering $\leq$ we define the zeta function on $\bs P\times \bs P$ as $\zeta(x,y)=1$ if $x\leq y$ and $\zeta(x,y)=0$ otherwise. The M\"{o}bius function is then defined by setting $\mathfrak{m}(x,y)=0$ if $x\not\leq y$ and
$$
\sum_{x\leq z\leq y}\mathfrak{m}(x,z)\zeta(z,y)\;=\;\begin{cases}
	1 &\mbox{if }x=y,\\
	0 & \mbox{otherwise}.
\end{cases}
$$
Fixing a total ordering on $\bs P$, we can represent the zeta function by a matrix $Z$ and then the matrix $M$ representing the M\"{o}bius function is simply the inverse of $Z$. If this total ordering is consistent with the partial ordering of $\bs P$ then both $Z$ and $M$ are upper-triangular and have ones on the diagonal; see Section~4.1 in \cite{zwiernik2016semialgebraic} for more details. 

For the poset $\bs \Pi_r$ the M\"{o}bius function  satisfies for any $\rho\leq \pi$ ($\rho$ is a refinement of $\pi$)
\begin{equation}\label{eq:mobiuspart}
	\mathfrak{m}(\rho,\pi)\;=\;(-1)^{|\rho|-|\pi|}\prod_{B\in \pi}(|\rho_B|-1)!,
\end{equation}
where $|\rho_B|$ is the number of blocks in which $\rho$ subdivides the block $B$ of $\pi$. In particular, denoting by $\bs 1\in \bs\Pi_r$ the one-block partition, for every $\pi\in \bs\Pi_r$
$$
\mathfrak{m}(\pi,\bs 1)\;=\;(-1)^{|\pi|-1}(|\pi|-1)!.
$$

To explain how $\mathfrak{m}(\pi,\bs 1)$ appears in \eqref{eq:kinmom}, we recall the M\"{o}bius inversion formula, which becomes clear given the matrix formulation using $Z$ and $M=Z^{-1}$.
\begin{lem}[M\"{o}bius inversion theorem]\label{lem:mInvers}
	Let $\bs P$ be a poset. For two functions $c,d$ on $\bs P$, we have $d(x)=\sum_{y\leq x} c(y)$ for all $x\in \bs P$ if and only if $c(x)=\sum_{y\leq x}\mathfrak{m}(x,y)d(y)$.
\end{lem}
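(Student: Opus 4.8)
The statement is, at bottom, the assertion that the matrices $Z$ and $M$ introduced just above are mutual inverses, transported onto the pair of functions $(c,d)$. The plan is therefore to lean on the matrix formulation already set up in the excerpt rather than to manipulate sums by hand. First I would fix a total ordering of $\bs P$ consistent with $\leq$, so that $Z=(\zeta(x,y))$ and $M=(\mathfrak{m}(x,y))$ are both upper-triangular with ones on the diagonal. The defining relation for $\mathfrak{m}$ is precisely the matrix identity $MZ=I$; since a unit upper-triangular matrix is invertible, this forces $M=Z^{-1}$ and hence also the dual identity $ZM=I$. This two-sided inverse property is the engine of the whole argument.

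Next I would encode the two functions as column vectors indexed by $\bs P$. The hypothesis $d(x)=\sum_{y\leq x}c(y)=\sum_y \zeta(y,x)\,c(y)$ is exactly the matrix equation $d=Z'c$, where $Z'$ denotes the transpose. For the forward implication, multiplying on the left by $(Z')^{-1}=M'$ yields $c=M'd$, that is $c(x)=\sum_y \mathfrak{m}(y,x)\,d(y)=\sum_{y\leq x}\mathfrak{m}(y,x)\,d(y)$, where the range collapses to $y\leq x$ because $\mathfrak{m}(y,x)=0$ unless $y\leq x$. For the converse, starting from $c=M'd$ and multiplying by $Z'=(M')^{-1}$ returns $d=Z'c$, i.e.\ $d(x)=\sum_{y\leq x}c(y)$. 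Because $Z$ and $M$ are genuine inverses, the two directions are nothing more than left multiplication by a matrix and by its inverse, so the equivalence is immediate once $M=Z^{-1}$ has been established.

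For readers who prefer to avoid the vector encoding, I would also record the basis-free version: substitute $d(y)=\sum_{z\leq y}c(z)$ into $\sum_{y\leq x}\mathfrak{m}(y,x)\,d(y)$, interchange the order of summation to obtain $\sum_{z\leq x}c(z)\bigl(\sum_{z\leq y\leq x}\mathfrak{m}(y,x)\bigr)$, and recognize the inner sum as the Kronecker delta $\delta_{zx}$ by the dual defining identity coming from $ZM=I$. I expect the only genuine subtlety to be the bookkeeping: keeping the direction of $\mathfrak{m}$ and the transpose straight, so that the summation index lands on $\mathfrak{m}(y,x)$ with $y\leq x$, and invoking the correct one of the two dual identities $MZ=I$ and $ZM=I$ at each step. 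Once the two-sided inverse $ZM=MZ=I$ is secured for these finite triangular matrices, the rest is routine linear algebra.
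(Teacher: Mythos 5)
Your proof is correct and is essentially the paper's own argument: the paper gives no separate proof of this lemma, remarking only that it ``becomes clear given the matrix formulation using $Z$ and $M=Z^{-1}$'', which is precisely the unit-upper-triangular inverse argument you spell out (including the two-sided identity $ZM=MZ=I$ that your basis-free variant invokes). One small but worthwhile observation: your derivation correctly lands on $c(x)=\sum_{y\leq x}\mathfrak{m}(y,x)\,d(y)$, whereas the lemma as printed writes $\mathfrak{m}(x,y)$ --- under the paper's convention that $\mathfrak{m}(x,y)=0$ unless $x\leq y$, that ordering would make every term with $y\neq x$ vanish, so the printed statement has the arguments transposed and your bookkeeping silently repairs it.
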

For example, this result gives the simple formula \eqref{eq:momink} that defines moments in terms of cumulants. 

\subsection{Laws of total expectation and cumulance}\label{app:ltc}

The law of total expectation is well known; for two random variables $X,H$ defined on the same probability space we have $\E X = \E[ \E(X | H) ]$. \cite{brillinger1969calculation} derives an analog result for cumulants.
\begin{prop}[Multivariate law of total cumulants]\label{prop:ltc}Let $\kappa_s(X|H)$ be the conditional $s$-th cumulant tensor of $X$ given a variable $H$. We have
	$$
	\kappa_r(X)\;=\;\sum_{\pi\in \bs\Pi_r}{\rm cum}\left((\kappa_{|B|}(X|H))_{B\in \pi}\right),
	$$
	where for $\bs i=(i_1,\ldots,i_r)$
	$$
	\left[{\rm cum}((\kappa_{|B|}(X|H))_{B\in \pi})\right]_{\bs i}={\rm cum}\left(({\rm cum}(X_{\bs i_B}|H))_{B\in \pi}\right).
	$$
\end{prop}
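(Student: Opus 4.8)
The plan is to derive the identity from the conditional cumulant generating function, reducing the tensor statement to a scalar one by multilinearity. Fix $\bs i=(i_1,\ldots,i_r)$. Every term on both sides is multilinear in the components of $X$, so it suffices to compute $\kappa_r(X)_{\bs i}={\rm cum}(X_{i_1},\ldots,X_{i_r})$, which I read off as the coefficient of $t_{i_1}\cdots t_{i_r}$ in $K_X(\bs t)$.

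First I would condition on $H$ using the law of total expectation and rewrite
\[
K_X(\bs t)\;=\;\log \E\big[\,\E(e^{\bs t'X}\mid H)\,\big]\;=\;\log \E\big[\exp K_X(\bs t\mid H)\big],
\]
where $K_X(\bs t\mid H)=\log\E(e^{\bs t'X}\mid H)$ is the conditional CGF: a power series in $\bs t$ whose coefficients are random through $H$, with no constant term, and whose mixed partials at $\bs t=0$ over a block $B$ of index positions are precisely the conditional cumulants ${\rm cum}(X_{\bs i_B}\mid H)$.

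The key observation is that, for fixed $\bs t$, the map $H\mapsto K_X(\bs t\mid H)$ is an ordinary random variable and $\log\E[\exp(\cdot)]$ is its scalar cumulant generating function. Expanding this outer CGF and invoking multilinearity of cumulants gives
\[
K_X(\bs t)\;=\;\sum_{k\ge 1}\frac{1}{k!}\,{\rm cum}_H\big(\underbrace{K_X(\bs t\mid H),\ldots,K_X(\bs t\mid H)}_{k}\big),
\]
after which each of the $k$ factors is expanded as $K_X(\bs t\mid H)=\sum_{m\ge1}\tfrac{1}{m!}\sum_{\bs j}{\rm cum}(X_{j_1},\ldots,X_{j_m}\mid H)\,t_{j_1}\cdots t_{j_m}$. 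Extracting the coefficient of $t_{i_1}\cdots t_{i_r}$ distributes the $r$ derivative slots among the $k$ factors: a summand in which the positions $\{1,\ldots,r\}$ are split into $k$ groups corresponds to a set partition $\pi\in\bs\Pi_r$ with $k$ blocks, and a group $B$ contributes the random variable ${\rm cum}(X_{\bs i_B}\mid H)$. This reproduces $\sum_{\pi\in\bs\Pi_r}{\rm cum}_H\big(({\rm cum}(X_{\bs i_B}\mid H))_{B\in\pi}\big)$, as desired.

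The main obstacle is the bookkeeping in this last step: I must show that the sum over ordered $k$-tuples of multi-indices, carrying the weight $\tfrac{1}{k!}$ from the exponential and the $\tfrac{1}{m!}$ factors from the conditional CGF, reindexes as an \emph{unweighted} sum over set partitions. The $\tfrac{1}{k!}$ cancels the $k!$ orderings of the blocks, while the $\tfrac{1}{m!}$ factors are absorbed by the multinomial coefficients that appear when the distinguishable positions $i_1,\ldots,i_r$ are assigned to the slots of each factor; this is the standard exponential-formula cancellation. An alternative, purely combinatorial route avoids generating functions altogether: expand ${\rm cum}(X_{\bs i})$ via \eqref{eq:kinmom}, replace each unconditional moment by $\E[\,\E(\cdot\mid H)\,]$, expand the conditional moments in conditional cumulants through \eqref{eq:momink}, and collapse the resulting double sum over the partition lattice using M\"{o}bius inversion on $\bs\Pi_r$ (Lemma~\ref{lem:mInvers}). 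Either route requires only finiteness of moments up to order $r$, and since the final identity is polynomial in the conditional and unconditional moments, all manipulations may be carried out at the level of formal power series, which sidesteps any convergence question for the MGF.
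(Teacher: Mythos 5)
Your proposal is correct, but note that the paper itself contains no proof of this proposition: it is stated as a known result with a citation to \citet{brillinger1969calculation}, followed only by a discussion of how to parse the formula and the special cases $r=2,3$. So there is nothing in the paper to compare against step by step; what you have written is essentially the classical argument (Brillinger's own), namely: write $K_X(\bs t)=\log\E\bigl[\exp K_X(\bs t\mid H)\bigr]$, recognize the outer $\log\E\exp(\cdot)$ as the scalar CGF of the random variable $K_X(\bs t\mid H)$ evaluated at $1$, so that $K_X(\bs t)=\sum_{k\geq 1}\tfrac{1}{k!}\,{\rm cum}_k\bigl(K_X(\bs t\mid H),\ldots,K_X(\bs t\mid H)\bigr)$, and then expand each factor by multilinearity. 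Your identification of the main obstacle is also the right one, and your resolution of it is correct: when the $r$ differentiation slots are distributed over the $k$ factors via the Leibniz rule, only surjective assignments survive (a factor receiving no derivative contributes $K_X(\bs 0\mid H)=0$, killing the joint cumulant by multilinearity), the $1/k!$ cancels the $k!$ orderings of the blocks because ${\rm cum}_k$ is symmetric in its arguments, and each $1/|B|!$ is absorbed by the $|B|!$ orderings of the derivatives landing in that factor. One small point of care: with repeated indices among $i_1,\ldots,i_r$, literally "extracting the coefficient of $t_{i_1}\cdots t_{i_r}$" conflates distinct tensor entries; your parallel phrasing in terms of derivative slots (equivalently, polarizing with $r$ distinct auxiliary variables) is the formulation that actually carries the argument, so keep that one. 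Your alternative purely combinatorial route (expand via \eqref{eq:kinmom}, condition, re-expand via \eqref{eq:momink}, and collapse with M\"{o}bius inversion on $\bs\Pi_r$) does work, but the "collapse" hides a genuine computation on the partition lattice --- the double sum is indexed by a partition of $[r]$ together with a partition of its blocks --- so as written it is a sketch rather than a proof; the CGF route is the one that stands on its own.
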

It is certainly hard to parse this formula at first so we offer a short discussion. The expression ${\rm cum}(({\rm cum}(X_{\bs i_B}|H))_{B\in \pi})$ on the right denotes the cumulant of order $|\pi|$ of the conditional variances ${\rm cum}(X_{\bs i_B}|H)$ for $B\in \pi$. A special case of this result is the law of total covariance. 
$$
[\kappa_2(X)]_{ij}\;=\;{\rm cov}(X_i,X_j)\;=\;\E({\rm cov}(X_i,X_j|H))+{\rm cov}(\E(X_i|H),\E(X_j|H)),
$$
where the first summand on the right corresponds to the partition $12$ and the second corresponds to the split $1/2$. Since there are five possible partitions of $\{1,2,3\}$ the third order cumulant can be given in conditional cumulants as
\begin{eqnarray*}
	[\kappa_3(X)]_{ijk}&=& \E({\rm cum}(X_i,X_j,X_k|H))+{\rm cov}(\E(X_i|H),{\rm cov}(X_j,X_k|H))\\
	&+& {\rm cov}(\E(X_j|H),{\rm cov}(X_i,X_k|H))+{\rm cov}(\E(X_k|H),{\rm cov}(X_i,X_j|H))\\
	&+&{\rm cum}(\E(X_i|H),\E(X_j|H),\E(X_k|H)).
\end{eqnarray*}
Proposition~\ref{prop:ltc} is useful for example if the components of $X$ are conditionally independent given $H$ in which case all mixed conditional cumulants vanish. Another scenario is when $X$ conditionally on $H$ is Gaussian, in which case all higher order conditional tensors vanish. 

\subsection{Estimating moments and cumulants}\label{app:kstats}

Given a sample $\{Y_s \}_{s=1}^n$, unbiased estimates for the $r$th order moment tensor $\mu_r(Y)$ are obtained by computing the sample moments 
\begin{equation}\label{eq:samplemom}
	[\hat{\bs \mu}_r]_{i_1\ldots i_r} = \frac{1}{n} \sum_{s=1}^n Y_{s,i_1}Y_{s,i_2} \ldots Y_{s,i_r}~. 
\end{equation}
Using our multilinear notation we can more compactly write 
\begin{equation}\label{eq:mominY}
	\hat{\bs \mu}_r=\frac{1}{n}\bs Y'\bullet I_r\;\in \;S^r(\R^d).
\end{equation}
where $I_r\in S^r(\R^n)$ is the identity tensor, that is, the diagonal tensor satisfying $(I_r)_{t\cdots t}=1$ for all $1\leq t\leq n$.

Unbiased estimates for the cumulants are computed using multivariate \textsf{k}-statistics \cite{speed1983cumulants}, which generalize classical \textsf{k}-statistics introduced by \cite{fisher1930moments}. For a collection of useful results on \textsf{k}-statistics see also \cite[Chapter 4]{mccullagh2018tensor}. 

Specifically, the entries of the $r$th order $\mathsf k$-statistic used to estimate the cumulant $[\kappa_r(Y)]_{i_1\ldots i_r}$ are given by (see \cite[(4.5)-(4.7)]{mccullagh2018tensor})
\begin{equation}\label{kstat} 
	[\mathsf k_r]_{i_1, \ldots, i_r} =  \frac1n \sum_{t_1=1}^n\cdots \sum_{t_r=1}^n\Phi_{t_1,\ldots,t_r}  Y_{t_1,i_1}\cdots Y_{t_r,i_r}
\end{equation}
with $\Phi\in S^r(\R^n)$ satisfying
$$
\Phi_{t_1\cdots t_r}\;=\;(-1)^{\nu-1}\frac{1}{\binom{n-1}{\nu-1}},
$$
where $\nu\leq n$ is the number of distinct indices in $(t_1,\ldots,t_n)$. Let $\bs Y\in \R^{n\times d}$ be the data matrix. More compactly, we have 
\begin{equation}\label{eq:kstatinY}
	\mathsf k_r\;=\;\frac1n \bs Y'\bullet \Phi\;\in \;S^r(\R^d).
\end{equation}
We note the following important result; see Proposition~4.3 in \cite{speed1986cumulants}.
\begin{prop}\label{prop:ustats}
	The $\mathsf k$-statistic in \eqref{kstat} forms a U-statistic. In particular, it is unbiased and it has the minimal variance among all unbiased estimators.
\end{prop}

Besides being unbiased and efficient, an additional benefit of working with $\mathsf k_r$ statistics is that there are several statistical packages available that compute them, e.g. \texttt{kStatistics} for \texttt{R} and \texttt{PyMoments} for \texttt{Python}. The first package uses the powerful machinery of umbral calculus to make the symbolic computations efficient \cite{di2009new}. 

\subsection{\textsf k-statistics and sample cumulants}\label{app:kstatisample}

For later considerations we need to understand better the relation between $\mathsf k_r$ and the natural plug-in estimator $\hat{\bs \kappa}_r$, which is obtained by first estimating the raw moments and then plugging them into \eqref{eq:kinmom}. The relevant sample moments that allow to compute $\hat{\bs \kappa}_r$ from \eqref{eq:kinmom} are summarized in $\hat{\bs \mu}_p$ for $p \leq r$. 

If $B\subseteq [n]$ then write $I_B$ for the identity tensor in $S^{|B|}(\R^n)$. For any partition $\pi\in \bs\Pi_r$ the tensor product $\bigotimes_{B\in \pi} I_B\in S^r(\R^n)$ satisfies
$$
\left[\bigotimes_{B\in \pi} I_B\right]_{t_1\cdots t_r}\;=\;\prod_{B\in \pi} \left[I_B\right]_{\bs t_B}\;=\;\begin{cases}
	1 & t_i=t_j\mbox{ whenever } i,j\in B\in \pi,\\
	0 & \mbox{otherwise}.
\end{cases}
$$
For every $\pi\in \bs\Pi_r$, define coefficients \begin{equation}\label{eq:cpi}
	c(\pi)\;=\;\sum_{\rho\leq \pi}\mathfrak{m}(\rho,\pi)(-1)^{|\rho|-1}\frac{1}{\binom{n-1}{|\rho|-1}}\;=\;n\sum_{\rho\leq \pi}\mathfrak{m}(\rho,\pi)\mathfrak{m}(\rho,\bs 1)\frac{1}{(n)_{|\rho|}},
\end{equation}
where $\mathfrak{m}$ is the M\"{o}bius function on $\bs\Pi_r$ given in \eqref{eq:mobiuspart} and $(n)_{k}=n(n-1)\cdots (n-k+1)$ is the corresponding falling factor.
\begin{lem}\label{lem:kstatinmom}
	We have
	$$
	\Phi\;=\;\sum_{\pi\in \bs\Pi_r}c(\pi)\bigotimes_{B\in \pi} I_B,
	$$
	which gives an alternative formula for \textsf{k}-statistics
	$$
	[\mathsf k_r]_{i_1, \ldots, i_r}\;=\; \sum_{\pi\in \bs\Pi_r}n^{|\pi|-1}c(\pi)\prod_{B\in \pi}\hat{\bs\mu}_{\bs i_B}.
	$$\end{lem}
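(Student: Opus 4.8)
The plan is to reduce both claims to the observation that the kernel $\Phi$ is a class function of the tuple $\bs t=(t_1,\ldots,t_r)$: its value depends on $\bs t$ only through the set partition $\sigma(\bs t)\in\bs\Pi_r$ of $\{1,\ldots,r\}$ whose blocks collect the indices with equal $t$-coordinates. Indeed, the number $\nu$ of distinct entries of $\bs t$ equals $|\sigma(\bs t)|$, so $\Phi_{\bs t}=\phi(\sigma(\bs t))$ with $\phi(\sigma):=(-1)^{|\sigma|-1}/\binom{n-1}{|\sigma|-1}$. The same reduction applies to the building blocks $\bigotimes_{B\in\pi}I_B$: by the displayed formula for its entries, $[\bigotimes_{B\in\pi}I_B]_{\bs t}=1$ exactly when every block of $\pi$ lies inside a single block of $\sigma(\bs t)$, i.e.\ when $\pi\leq\sigma(\bs t)$ in the refinement order. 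Hence $[\bigotimes_{B\in\pi}I_B]_{\bs t}=\zeta(\pi,\sigma(\bs t))$, the zeta function of $\bs\Pi_r$.

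Given this, the first identity $\Phi=\sum_{\pi}c(\pi)\bigotimes_{B\in\pi}I_B$ becomes, entrywise, the scalar statement that for every $\sigma\in\bs\Pi_r$
$$
\phi(\sigma)\;=\;\sum_{\pi\leq\sigma}c(\pi).
$$
I would then note that this is exactly the Möbius inversion (Lemma~\ref{lem:mInvers}) of the defining equation \eqref{eq:cpi}: reading \eqref{eq:cpi} as $c(\pi)=\sum_{\rho\leq\pi}\mathfrak{m}(\rho,\pi)\phi(\rho)$ and inverting over the refinement order on $\bs\Pi_r$ returns precisely $\phi(\sigma)=\sum_{\pi\leq\sigma}c(\pi)$. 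Care is needed only to match the direction of the refinement order and the argument order of $\mathfrak m$ to the convention fixed in Lemma~\ref{lem:mInvers} and \eqref{eq:mobiuspart}; no new computation is required beyond this bookkeeping.

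For the second identity I would substitute the just-proved expansion of $\Phi$ into the definition \eqref{kstat} of $\mathsf k_r$ and carry out the sum over $\bs t$ block by block. Fixing $\pi$, the factor $[\bigotimes_{B\in\pi}I_B]_{\bs t}$ forces $t_j$ to be constant on each block $B\in\pi$, so the sum over $\bs t$ collapses to one free index $s_B\in[n]$ per block and
$$
\sum_{\bs t}\Big[\bigotimes_{B\in\pi}I_B\Big]_{\bs t}\,Y_{t_1,i_1}\cdots Y_{t_r,i_r}\;=\;\prod_{B\in\pi}\sum_{s=1}^{n}\prod_{j\in B}Y_{s,i_j}\;=\;\prod_{B\in\pi} n\,\hat{\bs\mu}_{\bs i_B},
$$
where \eqref{eq:samplemom} identifies each inner sum as $n$ times the sample moment $\hat{\bs\mu}_{\bs i_B}$. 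This contributes a factor $n^{|\pi|}$; the prefactor $1/n$ in \eqref{kstat} turns it into $n^{|\pi|-1}$, yielding $[\mathsf k_r]_{i_1\cdots i_r}=\sum_{\pi}n^{|\pi|-1}c(\pi)\prod_{B\in\pi}\hat{\bs\mu}_{\bs i_B}$, as claimed.

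The only genuinely delicate point is the first step: verifying that \eqref{eq:cpi} is the correct Möbius inverse, which hinges on keeping the refinement order and the two-argument Möbius function oriented consistently with Lemma~\ref{lem:mInvers}. Everything after that is a reorganisation of finite sums, and in particular the second identity follows mechanically once the expansion of $\Phi$ is in hand.
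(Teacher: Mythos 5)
Your proposal is correct and follows essentially the same route as the paper: the first identity is verified entrywise by reducing it to $\phi(\sigma)=\sum_{\pi\leq\sigma}c(\pi)$, which is exactly the M\"{o}bius inversion of \eqref{eq:cpi} (the paper phrases this via the partition $\pi^*$ induced by the repeated indices of $\bs t$, your $\sigma(\bs t)$), and the second identity is obtained by substituting the expansion of $\Phi$ into \eqref{kstat} and collapsing the sum block by block. The only difference is presentational: the paper carries out this last step in the compact multilinear notation $\mathsf k_r=\tfrac1n\bs Y'\bullet\Phi$ using \eqref{eq:mominY}, whereas you expand the sums over $\bs t$ explicitly, which amounts to the same computation.
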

\begin{proof}
	For any $t_1,\ldots,t_r$ let $\nu$ be the number of distinct elements in this sequence and let $\pi^*$ be the partition $[r]$ with $\nu$ blocks corresponding to indices that are equal. We have
	$$
	\left(\sum_{\pi\in \bs\Pi_r}c(\pi)\bigotimes_{B\in \pi} I_B\right)_{t_1\cdots t_r}\;=\;\sum_{\rho\leq \pi^*}c(\rho)\;=\;(-1)^{\nu-1}\frac{1}{\binom{n-1}{\nu-1}}\;=\;\Phi_{t_1\cdots t_r},
	$$
	where the first equality follows by the definition of $\pi^*$ and $\bigotimes_B I_B$, and the second equality follows directly by the M\"{o}bius inversion formula on $\bs\Pi_r$ as given in Lemma~\ref{lem:mInvers}.
	
	The second claim follows from the fact that 
	$$
	\mathsf k_r\;\overset{\eqref{eq:kstatinY}}{=}\;\frac1n \bs Y'\bullet \Phi\;=\;\frac1n \sum_{\pi\in \bs\Pi_r}c(\pi)\bigotimes_{B\in \pi}(\bs Y'\bullet I_B)\;\overset{\eqref{eq:mominY}}{=}\; \sum_{\pi\in \bs\Pi_r}n^{|\pi|-1}c(\pi)\bigotimes_{B\in \pi}\hat{\bs \mu}_B,
	$$
	where $\hat{\bs \mu}_B$ is the symmetric tensor containing all $|B|$ order sample moments among the variables in $B$. 
\end{proof}

In the analysis of the asymptotic difference between $\mathsf k_r$ and the plug-in estimator $\hat{\bs \kappa}_r$ we will use the following lemma.
\begin{lem}\label{lem:diffmc}
	For every $\pi\in \bs\Pi_r$ we have  
	$$
	n^{|\pi|-1}c(\pi)-\mathfrak{m}(\pi,\bs 1)\;=\;O(n^{-1}).
	$$
\end{lem}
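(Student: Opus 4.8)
The plan is to work directly from the second expression for $c(\pi)$ in \eqref{eq:cpi}, namely $c(\pi)=n\sum_{\rho\leq \pi}\mathfrak{m}(\rho,\pi)\mathfrak{m}(\rho,\bs 1)/(n)_{|\rho|}$, and to track the $n$-dependence of each summand after multiplying through by $n^{|\pi|-1}$. Writing $m=|\pi|$, this gives
$$
n^{m-1}c(\pi)\;=\;\sum_{\rho\leq \pi}\mathfrak{m}(\rho,\pi)\,\mathfrak{m}(\rho,\bs 1)\,\frac{n^{m}}{(n)_{|\rho|}},
$$
so that the whole problem reduces to understanding the ratio $n^{m}/(n)_{|\rho|}$ as $n\to\infty$. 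Since the index set $\{\rho:\rho\leq\pi\}$ is a fixed finite set whose size depends only on $r$ and not on $n$, it will suffice to control each summand separately.

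The key structural fact I would use is that $\rho\leq\pi$ (that is, $\rho$ refines $\pi$) forces $|\rho|\geq|\pi|=m$, with equality if and only if $\rho=\pi$. Using $(n)_k=n^{k}\prod_{i=1}^{k-1}(1-i/n)=n^{k}\bigl(1+O(1/n)\bigr)$, I would rewrite
$$
\frac{n^{m}}{(n)_{|\rho|}}\;=\;n^{\,m-|\rho|}\bigl(1+O(1/n)\bigr),
$$
in which the exponent $m-|\rho|$ equals $0$ exactly for $\rho=\pi$ and is at most $-1$ for every $\rho<\pi$. The summand indexed by $\rho=\pi$ therefore equals $\mathfrak{m}(\pi,\pi)\,\mathfrak{m}(\pi,\bs 1)\bigl(1+O(1/n)\bigr)$; since $\mathfrak{m}(\pi,\pi)=1$ (immediate from \eqref{eq:mobiuspart} with $\rho=\pi$, as each block is then subdivided into a single block), this diagonal term contributes exactly $\mathfrak{m}(\pi,\bs 1)+O(1/n)$.

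It then remains to absorb the off-diagonal terms. For every $\rho<\pi$ the factor $n^{m-|\rho|}$ is $O(1/n)$, while $\mathfrak{m}(\rho,\pi)$, $\mathfrak{m}(\rho,\bs 1)$ and the correction $1+O(1/n)$ are all bounded by constants depending only on $r$; summing over the fixed finite index set then yields $\sum_{\rho<\pi}=O(1/n)$. Collecting the diagonal and off-diagonal pieces gives $n^{m-1}c(\pi)=\mathfrak{m}(\pi,\bs 1)+O(1/n)$, which is precisely the claim. I do not anticipate a genuine obstacle here: the only point requiring care is the bookkeeping observation that the refinement order singles out $\rho=\pi$ as the unique leading contribution and pins its coefficient to $\mathfrak{m}(\pi,\bs 1)$, every strictly finer $\rho$ being suppressed by one extra power of $n$ in the falling-factorial denominator.
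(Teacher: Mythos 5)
Your proof is correct, and it takes a genuinely different route from the paper's. You work directly from the defining formula \eqref{eq:cpi}, multiply through by $n^{|\pi|-1}$, and use two elementary facts: that $(n)_k=n^k\bigl(1+O(1/n)\bigr)$ for fixed $k$, and that in the refinement order $\rho\leq\pi$ implies $|\rho|\geq|\pi|$ with equality exactly when $\rho=\pi$. This isolates $\rho=\pi$ as the unique summand of order $1$, with coefficient $\mathfrak{m}(\pi,\pi)\,\mathfrak{m}(\pi,\bs 1)=\mathfrak{m}(\pi,\bs 1)$, while every strict refinement is suppressed by at least one power of $n$; since the index set is finite and independent of $n$, the error is $O(n^{-1})$. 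The paper instead recycles the M\"{o}bius-inversion identity \eqref{eq:sumc}, $\sum_{\rho\leq\pi}c(\rho)=(-1)^{|\pi|-1}\binom{n-1}{n-|\pi|}^{-1}$, already obtained in the proof of Lemma~\ref{lem:kstatinmom}, and runs a downward induction on the number of blocks starting from the singleton partition $\bs 0$: the inductive hypothesis shows all terms $c(\rho)$ with $\rho<\pi$ are negligible at scale $n^{|\pi|-1}$, so the summed identity pins down $c(\pi)$ itself. What each approach buys: yours is self-contained and avoids induction altogether — it is arguably the more transparent argument, since the leading term is visible directly in the definition; the paper's has the advantage of reusing machinery already in place and of making the role of the inversion formula explicit, which is the same mechanism driving Lemma~\ref{lem:kstatinmom}. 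Both yield exactly the claimed $O(n^{-1})$ rate.
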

\begin{proof}
	As we noted in the proof of Lemma~\ref{lem:kstatinmom}, the M\"{o}bius inversion formula in Lemma~\ref{lem:mInvers} gives that 
	\begin{equation}\label{eq:sumc}
		\sum_{\rho\leq \pi}c(\rho)=(-1)^{|\pi|-1}\frac{1}{\binom{n-1}{n-|\pi|}}.        
	\end{equation}
	Let $\bs 0\in \bs\Pi_r$ be the minimal partition into $r$ singleton blocks. By \eqref{eq:sumc}, applied to $\pi=\bs 0$,
	$$
	n^{r-1}c(\bs 0)=(-1)^{r-1}\frac{n^{r-1}}{\binom{n-1}{n-r}}\;=\;\mathfrak{m}(\bs 0,\bs 1)\frac{n^r}{(n)_r},
	$$
	where $(n)_r=n\cdots (n-r+1)$ is the corresponding falling factorial. In particular, $n^{r-1}c(\bs 0)=\mathfrak{m}(\bs 0,\bs 1)+O(n^{-1})$. Now suppose the claim is proven for all partitions with more than $l$ blocks. Let $\pi$ be a partition with exactly $l$ blocks. If $\rho<\pi$ then $|\rho|>l$ and $n^{|\rho|-1}c(\rho)=\mathfrak{m}(\rho,\bs 1)+O(n^{-1})$ so
	$$
	n^{l-1}c(\rho)=n^{l-|\rho|}n^{|\rho|-1}c(\rho)=n^{l-|\rho|}\mathfrak{m}(\rho,\bs 1)+O(n^{l-|\rho|-1})=O(n^{l-|\rho|}).
	$$
	This assures that
	$$
	n^{l-1}\sum_{\rho\leq \pi} c(\rho)\;=\;n^{l-1}c(\pi)+O(n^{-1}).
	$$
	Using \eqref{eq:sumc} in the same way as above, we get that $n^{|\pi|-1}c(\pi)=\mathfrak{m}(\pi,\bs 1)+O(n^{-1})$ and now the result follows by recursion.
\end{proof}

\subsection{Vectorizations of tensors}\label{app:vec}

The dimension of the space of symmetric tensors $S^r(\R^d)$ is $\binom{d+r-1}{r}$. Like for symmetric matrices, it is often convenient to view $T\in S^r(\R^d)$ as a general tensor in $\R^{d\times \cdots\times d}$. In this case ${\rm vec}(T)\in \R^{d^r}$ is a vector obtained from all the entries of $T$. 

Throughout the paper we largely avoided vectorization. This operation is however hard to circumvent in the asymptotic considerations. If we make a specific claim about the joint Gaussianity of the entries of a random tensor $T$, we could use a more invariant approach of  \cite{eaton}. However, using vectorizations, makes the calculations more direct without referring to abstract linear algebra.  

In this context we also often rely on the matrix-vector version of the tensor equation $S=A\bullet T$
\begin{equation}\label{eq:kronecker}
	{\rm vec}(S)\;=\;A^{\otimes r}\cdot {\rm vec}(T),	
\end{equation}
where $A^{\otimes r}=A\otimes \cdots\otimes A$ if the $r$-th Kronecker power of $A$.

\subsection{Asymptotic distribution of sample statistics}\label{app:kstatsas}

To derive the asymptotic distribution of the minimum distance estimators in Section~\ref{sec:inference} we require the asymptotic distribution of the sample moments or the $\mathsf k$-statistics. 

Specifically, we need the joint distribution of the sample moments/cumulants that are restricted to zero. To derive these in a convenient way we define $m_{S,T}:\R^{d\times d}\to S^2(\R^d)\oplus S^r(\R^d)$ to be 
\begin{equation}\label{eq:geng}
	m_{S,T}(A)\;=\; (A\bullet S-I_d,\;A\bullet T)~. 
\end{equation}
The cases that we consider are $S=h_2(Y)$, $T=h_r(Y)$, in which case we write simply $m(A)$, and $S=\hat{h}_2$, $T=\hat{h}_r$, in which case we write $\hat m_n(A)$. Here, $\hat{h}_r$ denotes either the sample moments, denoted by $\hat{\bs \mu}_r$, or the $r$th order $\mathsf k$-statistic, denoted by $\mathsf k_r$, which are computed from a given sample $\{Y_s \}_{s=1}^n$ as discussed above. It is worth pointing out that these results generalize existing results \citep[e.g.][]{Jammalamadaka2021} for the asymptotic analysis of cumulant estimates to higher order tensors.  \\ 

\noindent \textbf{Sample moments}

The sample moments of $Y$ are defined as in \eqref{eq:samplemom}. When using moments the distance measure $\hat m_n(A)$ (see \eqref{eq:geng}) depends on the tensors $\hat{\bs \mu}_2$ and $\hat{\bs \mu}_r$. As formalized in the lemma below, we have that under suitable moment assumptions that 
\begin{equation}\label{consistencymoments}
	\hat{\bs \mu}_p \stackrel{p}{\to} \mu_p(Y) \qquad \forall \ p \leq r ~,
\end{equation}
and 
\begin{equation}\label{eq:assnormalmoments}
	\sqrt{n}{\rm vec}(\hat{\bs \mu}_2 - \mu_2(Y) , \hat{\bs \mu}_r - \mu_r(Y) ) \stackrel{d}{\to} N(0, V  )~,
\end{equation}
where $V$ is the asymptotic variance matrix with entries 
\[
V_{\bs i, \bs j} = {\rm cov}(Y_{i_1}\cdots Y_{i_k} , Y_{i_1}\cdots Y_{i_l}  ) \qquad k,l \in \{2,r\}~.
\]
We note that $V$ is not positive definite as vectorizing the tensors does not imply that the entries are unique. We will correct for this when required below. Further $V$ can be consistently estimated by its sample version. 

Given \eqref{eq:assnormalmoments} we can use \eqref{eq:kronecker} to derive the limiting distribution of $\hat m_n(A)$ for moments. We have 
\begin{align}
	\nonumber	\sqrt{n}{\rm vec}(\hat m_n(A) - m(A)) &= [A^{\otimes 2}, A^{\otimes r}] \cdot \sqrt{n} {\rm vec}(\hat{\bs \mu}_2 - \mu_2(Y) , \hat{\bs \mu}_r - \mu_r(Y) ) \\
	&\stackrel{d}{\to} N(0, A^{2,r} V A^{2,r'} ) 
\end{align}
where $ A^{2,r} = [A^{\otimes 2}, A^{\otimes r}]$. Let the asymptotic variance matrix be denoted by 
\begin{equation}\label{sigma2rmu}
	\Sigma^{2,r}_\mu = A^{2,r} V A^{2,r'}~. 
\end{equation}
\smallskip 

\noindent \textbf{$\mathsf k$-statistics}

Next, we provide analog steps for the $\mathsf k$-statistics. First, let $\bs\mu_{\leq r}$ be the vector containing all moments of a random vector $Y$ of order up to $r$ (it has dimension $\binom{d+r}{r}$). 
Formula \eqref{eq:kinmom} gives an explicit function for $\kappa_r(Y)$ in terms of $\bs \mu_{\leq r}$. For the vectorized tensor $\kappa_r(Y)$ we define the Jacobian $F = \nabla_{\bs \mu'_{\leq r}} {\rm vec}(\kappa_r(Y))$, which is a $d^r\times \binom{d+r}{r}$ matrix. This matrix is not a full rank but only because $\kappa_r(Y)$ is a symmetric tensor which has many repeated entries. The submatrix obtained from $F$ by taking the rows corresponding to the unique entries of $\kappa_r(Y)$ has full row rank. This follows because for any two $r$-tuples $1\leq i_1\leq \cdots\leq i_r\leq d$ and $1\leq j_1\leq \cdots\leq j_r\leq d$ we have that 
$$    \tfrac{\partial \kappa_{i_1\cdots i_r}}{\partial \mu_{j_1\cdots j_r}}=\begin{cases}
	1 & \mbox{if } (i_1,\ldots,i_r)=(j_1,\ldots,j_r),\\
	0 & \mbox{otherwise},
\end{cases}
$$
and so, this submatrix contains the identity matrix. 

Under suitable moment conditions we have 
\[
\hat{\bm \mu}_{\leq r} \stackrel{p}{\to} \bm \mu_{\leq r} \qquad \text{and} \qquad \sqrt{n}\,(\hat{\bm \mu}_{\leq r} - \bm \mu_{\leq r}) \stackrel{d}{\to} N(0, H )   
\]
and since $\bm \mu_{\leq r}$ only includes unique moments we may conclude that $H$ is positive definite.

As in Appendix~\ref{app:kstatisample}, denote $\hat{\bs \kappa}_r$ to be the image of $\hat{\bs \mu}_{\leq r}$ under the map \eqref{eq:kinmom}. It then follows from the delta method that 
\begin{equation}\label{eq:sampled}
	\sqrt{n}\,{\rm vec}(\hat{\bs \kappa}_r -  \kappa_r(Y)) \;\;\stackrel{d}{\to}\;\; N(0 ,   F H  F'  )~. 
\end{equation}
We emphasize that this particular estimator of cumulants will not be of direct interest.  What we need is the form of the covariance matrix in \eqref{eq:sampled}. We will show that \textsf{k}-statistics $\mathsf k_r$ have the same asymptotic distribution.
\begin{lem}\label{replacek}
	If $\mathbb E \| Y_s \|^{2r} < \infty$ we have that 
	\[
	\sqrt{n} \,{\rm vec}(\mathsf k_r - \kappa_r(Y) ) \;\;\stackrel{d}{\to}\;\; N(0 ,   F  H  F '  )~.
	\]
\end{lem}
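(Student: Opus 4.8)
The plan is to show that $\mathsf k_r$ and the plug-in estimator $\hat{\bs \kappa}_r$ are asymptotically equivalent at rate $\sqrt n$, i.e.\ $\sqrt n\,{\rm vec}(\mathsf k_r-\hat{\bs\kappa}_r)\stackrel{p}{\to}0$, and then to combine this with the delta-method statement \eqref{eq:sampled} via Slutsky's theorem. First I would write both estimators as explicit polynomials in the sample moments. By Lemma~\ref{lem:kstatinmom},
\[
[\mathsf k_r]_{\bs i}\;=\;\sum_{\pi\in\bs\Pi_r}n^{|\pi|-1}c(\pi)\prod_{B\in\pi}\hat{\bs\mu}_{\bs i_B},
\]
while, writing $\hat{\bs\kappa}_r$ for the image of $\hat{\bs\mu}_{\le r}$ under the moment-to-cumulant map \eqref{eq:kinmom} and recalling that $\mathfrak m(\pi,\bs 1)=(-1)^{|\pi|-1}(|\pi|-1)!$,
\[
[\hat{\bs\kappa}_r]_{\bs i}\;=\;\sum_{\pi\in\bs\Pi_r}\mathfrak m(\pi,\bs 1)\prod_{B\in\pi}\hat{\bs\mu}_{\bs i_B}.
\]
Subtracting these gives, for each multi-index $\bs i$,
\[
[\mathsf k_r]_{\bs i}-[\hat{\bs\kappa}_r]_{\bs i}\;=\;\sum_{\pi\in\bs\Pi_r}\bigl(n^{|\pi|-1}c(\pi)-\mathfrak m(\pi,\bs 1)\bigr)\prod_{B\in\pi}\hat{\bs\mu}_{\bs i_B}.
\]

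Next I would bound this difference. Lemma~\ref{lem:diffmc} gives $n^{|\pi|-1}c(\pi)-\mathfrak m(\pi,\bs 1)=O(n^{-1})$ for every $\pi\in\bs\Pi_r$. The moment hypothesis $\E\|Y_s\|^{2r}<\infty$ guarantees (in particular) that the sample moments $\hat{\bs\mu}_{\le r}$ are consistent, so each product $\prod_{B\in\pi}\hat{\bs\mu}_{\bs i_B}$ converges in probability to $\prod_{B\in\pi}\mu_{\bs i_B}$ and is therefore $O_p(1)$. Since $\bs\Pi_r$ is a fixed finite index set, the whole sum is a finite combination of terms of order $O(n^{-1})\,O_p(1)$, whence $[\mathsf k_r]_{\bs i}-[\hat{\bs\kappa}_r]_{\bs i}=O_p(n^{-1})$ for each $\bs i$, and multiplying by $\sqrt n$ yields
\[
\sqrt n\,{\rm vec}(\mathsf k_r-\hat{\bs\kappa}_r)\;\stackrel{p}{\to}\;0.
\]

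Finally I would combine this with \eqref{eq:sampled}. The condition $\E\|Y_s\|^{2r}<\infty$ makes the order-$r$ monomials square-integrable, which justifies the central limit theorem for $\hat{\bs\mu}_{\le r}$ and hence, through the smooth (polynomial) map \eqref{eq:kinmom}, the convergence $\sqrt n\,{\rm vec}(\hat{\bs\kappa}_r-\kappa_r(Y))\stackrel{d}{\to}N(0,FHF')$ recorded in \eqref{eq:sampled}. Decomposing
\[
\sqrt n\,{\rm vec}(\mathsf k_r-\kappa_r(Y))\;=\;\sqrt n\,{\rm vec}(\hat{\bs\kappa}_r-\kappa_r(Y))+\sqrt n\,{\rm vec}(\mathsf k_r-\hat{\bs\kappa}_r),
\]
the first term converges in distribution to $N(0,FHF')$ and the second converges in probability to $0$, so Slutsky's theorem gives the claim.

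I do not expect a substantive obstacle here, since the heavy lifting is already done by Lemmas~\ref{lem:kstatinmom} and \ref{lem:diffmc}. The only point requiring care is the bookkeeping of stochastic orders: one must ensure that the $O(n^{-1})$ gap in the combinatorial coefficients is not amplified by the moment products, and this is exactly what consistency of $\hat{\bs\mu}_{\le r}$ (the $O_p(1)$ bound) provides.
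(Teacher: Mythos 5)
Your proposal is correct and follows essentially the same route as the paper's proof: reduce to showing $\sqrt{n}\,(\mathsf k_r-\hat{\bs\kappa}_r)\stackrel{p}{\to}0$ via \eqref{eq:sampled} and Slutsky, expand the difference over partitions using Lemma~\ref{lem:kstatinmom}, bound the coefficient gaps by $O(n^{-1})$ via Lemma~\ref{lem:diffmc}, and absorb the $O_p(1)$ sample-moment products. The only cosmetic difference is that you write out the plug-in estimator's partition expansion explicitly before subtracting, whereas the paper states the difference formula directly.
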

\begin{proof}
	By \eqref{eq:sampled} and Slutsky lemma, it is enough to show that $\sqrt{n} \,(\mathsf k_r - \hat{\bs \kappa}_r ) \stackrel{p}{\to} 0$. By Lemma~\ref{lem:kstatinmom},
	$$
	[\mathsf k_r-\hat{\bs \kappa}_r]_{i_1\cdots i_r}\;=\;\sum_{\pi\in \bs\Pi_r} (n^{|\pi|-1}c(\pi)-\mathfrak{m}(\pi,\bs 1))\prod_{B\in \pi}\hat{\bs \mu}_{\bs i_B},
	$$
	where the coefficients $c(\pi)$ are defined in \eqref{eq:cpi}. By Lemma~\ref{lem:diffmc}, $n^{|\pi|-1}c(\pi)-\mathfrak{m}(\pi,\bs 1)\;=\;O(n^{-1})$ for all $\pi\in \bs \Pi_r$ and so in particular
	$$
	\sqrt{n}(n^{|\pi|-1}c(\pi)-\mathfrak{m}(\pi,\bs 1))\;=\;o(1).
	$$

	Under the stated moment assumption $\hat{\bs \mu}_{\bs i_B} = O_p(1)$ and so $[\mathsf k_r-\hat{\bs \kappa}_r]_{i_1\cdots i_r}=o_P(1)$,
	which completes the proof.
\end{proof}

By Lemma~\ref{replacek}, every linear transformation of $\sqrt{n}{\rm vec}(\mathsf{k}_r-\kappa_r(Y))$ will be also Gaussian. We will be in particular interested in transformations $A^{\otimes r} {\rm vec}(\mathsf{k}_r-\kappa_r(Y))$ as motivated by the multilinear action of $A$ on $S^r(\R^d)$ (cf. \eqref{eq:kronecker}). We have
\begin{align*} 
	\sqrt{n} A^{\otimes r} {\rm vec}(\mathsf k_r  -  \kappa_r(Y))  &\;\;\stackrel{d}{\to}\;\; N(0 , A ^{\otimes r}  F  H (A^{\otimes r}F) '  )~. 
\end{align*}
A similar analysis can be given if $\kappa_r(Y)$ is complemented with some other lower order cumulants. We will use one version of that. Let $F^{2,r}$ be the Jacobian matrix of the transformation from $\bs\mu_{\leq r}$ to cumulants ${\rm vec}(\kappa_2(Y), \kappa_r(Y))\in \R^{d^2+d^r}$. By exactly the same arguments as above we get 
\begin{equation}\label{eq:kappa2r}
	\sqrt{n}\, {\rm vec}(\mathsf{k}_2- \kappa_2(Y),\mathsf{k}_r-\kappa_r(Y))\;\;\stackrel{d}{\to}\;\; N(0 ,   F^{2,r}  H  (F^{2,r}) '  )~.
\end{equation}
Recall from \eqref{eq:geng} that $m_{S,T}(A)=(A\bullet S-I_d,\;A\bullet T)$ and consider $m(A)$ and $\hat m_n(A)$ as defined by cumulants and $\mathsf k$-statistics in Section~\ref{sec:inference}. 
\begin{equation}\label{eq:mdiff}
	{\rm vec}(\hat m_n(A)-m(A))\;=\;[A^{\otimes 2},A^{\otimes r}]\cdot {\rm vec} (\mathsf{k}_2-\kappa_2(Y),\mathsf{k}_r-\kappa_r(Y)).
\end{equation}
We will write $A^{2,r}=[A^{\otimes 2},A^{\otimes r}]$ and, using \eqref{eq:kappa2r}, we immediately conclude
$$
\sqrt{n}\,{\rm vec}(\hat m_n(A)-m(A))\;\;\stackrel{d}{\to}\;\; N(0 ,   A^{2,r}F^{2,r}  H  (A^{2,r} F^{2,r}) '  )~.
$$

Let this asymptotic covariance matrix be denoted by 
\begin{equation}\label{eq:Sig2r}
	\Sigma^{2,r}_{\mathsf k} = A^{2,r}F^{2,r}  H  (A^{2,r} F^{2,r}) '.
\end{equation}

We summarize these general results in the following lemma adopting the notation required for the main text. 
\begin{lem}\label{lem:convergence}
	Suppose $\{Y_s \}_{s=1}^n$ is i.i.d. 
	\begin{enumerate} 
		\item if $\mathbb E \| Y_s \|^{r} < \infty$, then $\hat{\bs \mu}_p - \mu_p(Y) \stackrel{p}{\to} 0$ and  $\mathsf k_p - \kappa_p(Y) \stackrel{p}{\to} 0$ for all $p \leq r$.   
		
		\item if $\mathbb E \| Y_s \|^{2r} < \infty$, then 
		\[
		\sqrt{n} {\rm vec}(\hat m_n(A)-m(A)) \stackrel{d}{\to} N(0, \Sigma_h^{2,r})  \qquad h=\mu, \mathsf k ~, 
		\] 
		where $h=\mu$ or $h = \mathsf k$ depends on whether $\hat m_n(A)$ and $m(A)$ are based on moments or cumulants, respectively. We have that the moment based variance $\Sigma_\mu^{2,r}$ is defined in \eqref{sigma2rmu} and the cumulant based variance  $\Sigma_{\mathsf k}^{2,r}$ in \eqref{eq:Sig2r}.
	\end{enumerate}
\end{lem}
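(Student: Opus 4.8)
The plan is to assemble this summary lemma from the component results established earlier in this appendix, treating the moment and cumulant cases in parallel. The statement is essentially a bookkeeping device that collects the consistency and asymptotic normality facts already proved, so the proof will mostly consist of citing those results and invoking Slutsky's theorem and the continuous mapping theorem.

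For the consistency claim in part (1), I would first handle the sample moments. Each entry $[\hat{\bs\mu}_p]_{i_1\cdots i_p}$ is an empirical average of the i.i.d.\ random variables $Y_{s,i_1}\cdots Y_{s,i_p}$, and under $\E\|Y_s\|^r<\infty$ with $p\le r$ these are integrable by H\"older's inequality, since $\E|Y_{s,i_1}\cdots Y_{s,i_p}|\le \E\|Y_s\|^p\le (\E\|Y_s\|^r)^{p/r}$. The weak law of large numbers then gives $\hat{\bs\mu}_p\stackrel{p}{\to}\mu_p(Y)$ entrywise. For the $\mathsf k$-statistics I would invoke Lemma~\ref{lem:kstatinmom}, which expresses $\mathsf k_p$ as a polynomial in the sample moments with coefficients $n^{|\pi|-1}c(\pi)$, together with Lemma~\ref{lem:diffmc}, which shows these coefficients converge to the M\"obius coefficients $\mathfrak{m}(\pi,\bs 1)$. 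Since the natural plug-in estimator $\hat{\bs\kappa}_p$ uses exactly the coefficients $\mathfrak{m}(\pi,\bs 1)$ appearing in \eqref{eq:kinmom}, the two estimators differ by terms that vanish in probability, and the continuous mapping theorem applied to the convergent sample moments yields $\mathsf k_p\stackrel{p}{\to}\kappa_p(Y)$.

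For part (2), the moment case is immediate. Under $\E\|Y_s\|^{2r}<\infty$ the products $Y_{i_1}\cdots Y_{i_r}$ have finite variance, so the multivariate central limit theorem produces \eqref{eq:assnormalmoments}; composing with the linear map $A^{2,r}=[A^{\otimes2},A^{\otimes r}]$ through the identity \eqref{eq:kronecker} and reading off the resulting variance gives the limit $N(0,\Sigma_\mu^{2,r})$ with $\Sigma_\mu^{2,r}$ as in \eqref{sigma2rmu}. For the cumulant case I would rely on Lemma~\ref{replacek} and its joint version \eqref{eq:kappa2r}, which establish that the $\mathsf k$-statistics share the limiting Gaussian law of the delta-method-based plug-in cumulant estimators; applying the same linear map $A^{2,r}$ via \eqref{eq:mdiff} then delivers $N(0,\Sigma_{\mathsf k}^{2,r})$ with $\Sigma_{\mathsf k}^{2,r}$ as in \eqref{eq:Sig2r}.

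The substantive work is all contained in the supporting lemmas rather than in the assembly itself. The main technical obstacle, already discharged upstream, is the equivalence in Lemma~\ref{replacek} between the $\mathsf k$-statistics and the naive plug-in cumulant estimators, which requires the combinatorial coefficient analysis of Lemmas~\ref{lem:kstatinmom} and~\ref{lem:diffmc} to show that the $O(n^{-1})$ discrepancy in the defining coefficients, when multiplied by $\sqrt n$ and the bounded sample moments, is $o_P(1)$. Given these inputs, the present lemma follows routinely, with the only care needed being to record that the stronger moment assumption $\E\|Y_s\|^{2r}<\infty$ is precisely what guarantees finite variances of all the order-$r$ products entering the central limit theorem.
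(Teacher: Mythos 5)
Your proposal is correct and follows essentially the same route as the paper: the paper gives no standalone proof of Lemma~\ref{lem:convergence}, since it is explicitly a summary of the derivations immediately preceding it, and your assembly (weak law of large numbers plus H\"older for $\hat{\bs\mu}_p$, Lemmas~\ref{lem:kstatinmom} and~\ref{lem:diffmc} for the $\mathsf k$-statistics, the CLT composed with $A^{2,r}$ via \eqref{eq:kronecker} for the moment case, and Lemma~\ref{replacek} with \eqref{eq:kappa2r} and \eqref{eq:mdiff} for the cumulant case) reproduces exactly those derivations. Your Part~(1) argument in fact spells out details (integrability via H\"older, the $o_P(1)$ coefficient comparison at the unscaled level) that the paper leaves implicit under the phrase ``suitable moment assumptions.''
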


\section{Additional inference tools}\label{appsec:inferencedetails}

In this section we complement the inference Section~\ref{sec:inference} with some additional tools that can be used to select the appropriate moment/cumulant zero restrictions in a data driven way.

\subsection{Testing over-identifying restrictions}

While zero restrictions on higher order moments or cumulants can be motivated from several angles (cf. the discussion in Section~\ref{sec:motivation}), it is useful to test ex-post whether the restrictions indeed appear to hold in a given application. In the setting where $d_g$ is strictly greater then $d^2$, i.e. the total number of restrictions is larger when compared to the number of parameters in $A$, we can conduct a general specification test following the approach outlined in \cite{Hansen1982}. 

\begin{prop}\label{prop:hansenJ}
If the conditions of Proposition~\ref{prop:assnormal} hold we have that as $n\to \infty$
\[
\Lambda_n\;:=\; n\hat L_{\widehat \Sigma_n^{-1}}(\widehat A_{\widehat \Sigma_n^{-1}}) \;\stackrel{d}{\to}\; \chi^2(d_g - d^2)~. 
\]
\end{prop}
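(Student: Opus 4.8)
The plan is to run the classical Hansen (1982) over-identification argument, adapted to our setting where the estimator is only identified up to an element of $\mathrm{SP}(d)$. Throughout, abbreviate $\widehat A:=\widehat A_{\widehat\Sigma_n^{-1}}$, fix (by Proposition~\ref{prop:consist}) the $Q\in\mathrm{SP}(d)$ for which $\widehat A\stackrel{p}{\to}QA_0$, and write $g_0:=\sqrt n\,\hat g_n(QA_0)$ and $G:=G(QA_0)$. The two inputs I would invoke are the central limit theorem $g_0\stackrel{d}{\to}N(0,\Sigma)$ supplied by the asymptotic results for sample moments and $\mathsf k$-statistics in Appendix~\ref{app:kstatsas} (this is exactly where $g(QA_0)=0$ and condition (vi) are used), together with the first-order optimality condition for the interior minimizer guaranteed by condition (v), namely $G(\widehat A)'\widehat\Sigma_n^{-1}\hat g_n(\widehat A)=0$.

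The core of the proof is to linearize the objective at $QA_0$. Since $g$ is a polynomial (hence smooth) map of $\mathrm{vec}(A)$, a mean-value expansion gives $\sqrt n\,\hat g_n(\widehat A)=g_0+G\,\sqrt n\,\mathrm{vec}(\widehat A-QA_0)+o_p(1)$, where the mean-value Jacobian is replaced by $G$ using $\widehat A\stackrel{p}{\to}QA_0$ and continuity of $G(\cdot)$. Substituting this expansion into the first-order condition and using Slutsky's theorem with $\widehat\Sigma_n^{-1}\stackrel{p}{\to}\Sigma^{-1}$ and $G(\widehat A)\stackrel{p}{\to}G$ yields $\sqrt n\,\mathrm{vec}(\widehat A-QA_0)=-(G'\Sigma^{-1}G)^{-1}G'\Sigma^{-1}g_0+o_p(1)$, and hence $\sqrt n\,\hat g_n(\widehat A)=P\,g_0+o_p(1)$ with $P:=I-G(G'\Sigma^{-1}G)^{-1}G'\Sigma^{-1}$.

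It then remains to identify the limiting quadratic form. Writing $\Lambda_n=\bigl(\sqrt n\,\hat g_n(\widehat A)\bigr)'\widehat\Sigma_n^{-1}\bigl(\sqrt n\,\hat g_n(\widehat A)\bigr)$ and again replacing $\widehat\Sigma_n^{-1}\stackrel{p}{\to}\Sigma^{-1}$, I would set $M:=\Sigma^{-1/2}G$ and $N:=I-M(M'M)^{-1}M'$. A one-line check shows $\Sigma^{-1/2}P=N\Sigma^{-1/2}$, so that $P'\Sigma^{-1}P=\Sigma^{-1/2}N\Sigma^{-1/2}$ and therefore $\Lambda_n=\tilde Z'N\tilde Z+o_p(1)$, where $\tilde Z:=\Sigma^{-1/2}g_0\stackrel{d}{\to}N(0,I_{d_g})$. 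Because $N$ is the orthogonal projection onto the orthogonal complement of the column space of $M$, it is symmetric idempotent of rank $d_g-\mathrm{rank}(G)$, so the continuous mapping theorem gives $\tilde Z'N\tilde Z\stackrel{d}{\to}\chi^2\bigl(d_g-\mathrm{rank}(G)\bigr)$.

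The main obstacle — and the only substantive point beyond bookkeeping — is showing $\mathrm{rank}(G)=d^2$, i.e.\ that $G=G(QA_0)$ has full column rank, so the degrees of freedom equal $d_g-d^2$ and $(G'\Sigma^{-1}G)$ is invertible. This full-rank Jacobian is precisely the regularity/local-identifiability condition already underpinning Proposition~\ref{prop:assnormal} (it is what makes the asymptotic variance $(G'\Sigma^{-1}G)^{-1}$ well defined), so under its hypotheses the rank claim is available. The remaining steps — the uniform replacement of the mean-value Jacobian by $G$, the $\widehat\Sigma_n\to\Sigma$ substitutions, and the handling of the $\mathrm{SP}(d)$ indeterminacy by arguing around the fixed limit $QA_0$ — are routine given consistency and the smooth polynomial structure of $g$.
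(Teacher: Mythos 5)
Your proposal is correct and follows essentially the same route as the paper's own proof: a mean-value expansion of $\hat g_n$ around $QA_0$, elimination of $\sqrt{n}\,\mathrm{vec}(\widehat A - QA_0)$ via the first-order condition, the resulting representation $\sqrt{n}\,\hat g_n(\widehat A) = P\,g_0 + o_p(1)$ with the idempotent rank-$(d_g - d^2)$ projection, and the quadratic-form-in-Gaussians limit, with the full column rank of $G$ supplied by the same lemma underlying Proposition~\ref{prop:assnormal}. If anything, your explicit use of the first-order condition $G(\widehat A)'\widehat\Sigma_n^{-1}\hat g_n(\widehat A)=0$ is cleaner than the paper's opening line, which misstates this as $\hat g_n(\widehat A_{\widehat\Sigma_n^{-1}})=0$ (impossible in an over-identified model) before proceeding with the identical algebra.
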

The proposition implies that $\Lambda_n$ can be viewed as a test statistic for verifying the identifying restrictions. Specifically, when $g(QA_0) \neq 0$ the statistic $\Lambda_n$ diverges under most alternatives. That said, if any of the other assumptions fails, e.g. the moment condition, the statistic will also fail to converge to a $\chi^2(d_g - d^2)$ random variable. This implies that we should view Proposition~\ref{prop:hansenJ} as a general test for model misspecification. 

A more refined test can be formulated when sufficient confidence exists in a subset of the identifying restrictions. To set this up let $g(A) = (g_1(A) , g_2(A))$ be a partition of the identifying moment/cumulant restrictions such that $g_1(A)$ has dimension $d_{g_1} \geq d^2$. We propose a test for whether the additional identifying restrictions $g_2(A)$ are valid. 

Denote as earlier $\Lambda_n= n\hat L_{\widehat \Sigma_n^{-1}}(\widehat A_{\widehat \Sigma_n^{-1}})$ and let $\Lambda_n^0$ be similarly defined by for a smaller set of identifying restrictions. 

\begin{prop}\label{prop:subset}
If the conditions of Proposition~\ref{prop:assnormal} hold we have that as $n\to \infty$
\[
C_n \;:=\; \Lambda_n-\Lambda_n^0 \;\stackrel{d}{\to} \;\chi^2(d_{g} - d_{g_1} )~.
\]
\end{prop}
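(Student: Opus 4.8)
The plan is to recognize $C_n$ as the classical GMM difference (``$C$-'') statistic and to prove the claim by writing both $\Lambda_n$ and $\Lambda_n^0$ as asymptotic quadratic forms in a single Gaussian vector, and then showing that the difference of the two forms is a quadratic form with an idempotent matrix of rank $d_g-d_{g_1}$; see \cite{Hansen1982} and \cite{Hall2005}. I work under the maintained hypothesis that all restrictions hold, so by the proof of Proposition~\ref{prop:assnormal} the estimators $\widehat A_{\widehat\Sigma_n^{-1}}$ and its restricted analogue are both consistent for the \emph{same} $QA_0$, and $\xi_n:=\sqrt n\,\hat g_n(QA_0)\stackrel d\to N(0,\Sigma)$ with $\Sigma$ as in \eqref{eq:Sigma}. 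Let $L=[\,I_{d_{g_1}}\ 0\,]$ be the selection matrix with $g_1=Lg$, so $G_1=LG$ and $\Sigma_{11}=L\Sigma L'$, where $G_1$ is the Jacobian of $g_1$. I take as part of the construction that $\Lambda_n^0$ is weighted by the $g_1$-block of the \emph{same} estimated covariance, i.e. $\widehat\Sigma^{0}_n\stackrel p\to\Sigma_{11}$; this common-$\Sigma$ requirement is essential below.

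First I would reproduce the quadratic-form representation already used in Proposition~\ref{prop:hansenJ}. A mean-value expansion of $\hat g_n$ around $QA_0$ together with the first-order condition $G'\widehat\Sigma_n^{-1}\hat g_n(\widehat A_{\widehat\Sigma_n^{-1}})=o_p(n^{-1/2})$ gives $\sqrt n\,\hat g_n(\widehat A_{\widehat\Sigma_n^{-1}})=(I_{d_g}-G(G'\Sigma^{-1}G)^{-1}G'\Sigma^{-1})\xi_n+o_p(1)$, so that with
\[
M:=\Sigma^{-1}-\Sigma^{-1}G(G'\Sigma^{-1}G)^{-1}G'\Sigma^{-1}
\]
one obtains $\Lambda_n\stackrel d\to \xi'M\xi$, where $\xi\sim N(0,\Sigma)$. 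Applying the identical argument to the subsystem $g_1$ yields $\Lambda_n^0\stackrel d\to \xi_1'M_1\xi_1=\xi'L'M_1L\,\xi$, where $\xi_1=L\xi$ and $M_1:=\Sigma_{11}^{-1}-\Sigma_{11}^{-1}G_1(G_1'\Sigma_{11}^{-1}G_1)^{-1}G_1'\Sigma_{11}^{-1}$. Hence, writing $Z:=\Sigma^{-1/2}\xi\sim N(0,I_{d_g})$ and $\Xi:=\Sigma^{1/2}(M-L'M_1L)\Sigma^{1/2}$, the continuous mapping theorem gives $C_n=\Lambda_n-\Lambda_n^0\stackrel d\to Z'\Xi Z$. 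It remains to identify $\Xi$.

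The crux is to show that $\Xi$ is an orthogonal projection of rank $d_g-d_{g_1}$. I would establish three facts. (a) With $P:=\Sigma^{-1/2}G(G'\Sigma^{-1}G)^{-1}G'\Sigma^{-1/2}$, one has $\Sigma^{1/2}M\Sigma^{1/2}=I_{d_g}-P$, an idempotent of rank $d_g-d^2$ because $\operatorname{rank}(G)=d^2$ (the nonsingularity of $G'\Sigma^{-1}G$ assumed in Proposition~\ref{prop:assnormal}). (b) Using $L\Sigma L'=\Sigma_{11}$ and $M_1\Sigma_{11}M_1=M_1$, the matrix $\Pi_1:=\Sigma^{1/2}L'M_1L\Sigma^{1/2}$ is idempotent of rank $d_{g_1}-d^2$. (c) The efficiency (FOC) identity
\[
G_1'M_1=G_1'\Sigma_{11}^{-1}-G_1'\Sigma_{11}^{-1}G_1(G_1'\Sigma_{11}^{-1}G_1)^{-1}G_1'\Sigma_{11}^{-1}=0
\]
together with $G'L'=G_1'$ gives $P\,\Pi_1=0$, hence $(I-P)\Pi_1=\Pi_1=\Pi_1(I-P)$. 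Combining (a)--(c), $\Xi=(I-P)-\Pi_1$ satisfies $\Xi^2=\Xi$, and $\operatorname{rank}(\Xi)=\operatorname{tr}(I-P)-\operatorname{tr}(\Pi_1)=(d_g-d^2)-(d_{g_1}-d^2)=d_g-d_{g_1}$. A quadratic form $Z'\Xi Z$ in $Z\sim N(0,I)$ with $\Xi$ idempotent of rank $k$ is $\chi^2(k)$, which delivers $C_n\stackrel d\to\chi^2(d_g-d_{g_1})$.

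I expect the main obstacle to be exactly step (c) and, more broadly, the bookkeeping that makes the cross terms cancel: the nesting $P\Pi_1=0$ hinges on efficient weighting with a \emph{common} estimate of $\Sigma$, since $G'L'=G_1'$ only lines up with the annihilator $G_1'M_1=0$ when $\Lambda_n^0$ is weighted by $\Sigma_{11}^{-1}$ (the $g_1$-block of $\Sigma^{-1}$'s source) rather than by an arbitrary $W$. For a general $W$ the incremental statistic is not $\chi^2$ (indeed need not be nonnegative), so efficiency is used in an essential way, paralleling the role it plays in Proposition~\ref{prop:assnormal}. The only remaining routine steps are transferring from the population $\Sigma,\Sigma_{11}$ to their consistent estimates via Slutsky, and checking the Taylor expansions hold uniformly, both of which are already covered by the regularity conditions invoked in Propositions~\ref{prop:consist} and~\ref{prop:assnormal}.
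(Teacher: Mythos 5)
Your proof is correct and is essentially the paper's own argument in slightly different notation: both expand $\Lambda_n$ and $\Lambda_n^0$ as quadratic forms in the common Gaussian limit $Z=\Sigma^{-1/2}\sqrt{n}\,\hat g_n(QA_0)$, and your matrix $\Xi=(I-P)-\Pi_1$ coincides exactly with the paper's $N-B$, since $\Pi_1=\Sigma^{1/2}L'M_1L\Sigma^{1/2}$ equals the paper's $B=\Xi'N_1\Xi$. The key cancellation is also the same: your annihilator identity $G_1'M_1=0$ is precisely the paper's $N_1\Sigma_{11}^{-1/2}G_1=0$, which yields idempotence of the difference matrix and the rank/trace count $d_g-d_{g_1}$.
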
    
The test statistic $C_n$ allows to verify whether adding the additional identifying restrictions $g_2(A)$ is valid. The test rejects when $g_2(Q A_0 ) \neq 0$, that is, when the additional restrictions do not hold.

\section{Computing the asymptotic variance}\label{appsec:assvar} 

In this section we give computational details for estimating the asymptotic variance matrices $\Sigma$ and $S$ as defined in Proposition \ref{prop:assnormal}. Starting with $\Sigma$ (see equation \eqref{eq:Sigma}) we first recall that $\Sigma$ is really $\Sigma_h$ and the expression depends on whether moment or cumulant restrictions are used. For moments we obtained 
\[
\Sigma_\mu = D_{\mathcal I}^{2,r} \Sigma_\mu^{2,r} D_{\mathcal I}^{2,r'}~ \qquad \text{with} \qquad \Sigma_\mu^{2,r} = A^{2,r} V A^{2,r'} ~,
\]
and for cumulants 
\[
\Sigma_\kappa = D_{\mathcal I}^{2,r} \Sigma_\kappa^{2,r} D_{\mathcal I}^{2,r'}~ \qquad \text{with} \qquad \Sigma_\kappa^{2,r} = A^{2,r}F^{2,r}  H  (A^{2,r} F^{2,r}) ' ~,  
\]
where $D_{\mathcal I}^{2,r}$ is a selection matrix that selects the corresponding to the unique entries in $S^r(\R^d)\oplus \cV^\perp$, $V$ and $H$ contain the covariances of ${\rm vec}( \hat{\bs \mu}_2,\hat{\bs \mu}_r)$ and $\hat{\bs \mu}_{\leq r}$, respectively,  $A^{2,r}=[A^{\otimes 2},A^{\otimes r}]$ and $F^{2,r}$ is the Jacobian matrix of the transformation from $\bs{ \mu}_{\leq r}$ to cumulants $(\kappa_2, \kappa_r)$, see Section~\ref{app:kstatsas} for explicit definitions. 

The moment matrices $V$ and $H$ and the Jacobian matrix $F^{2,r}$ can be estimated by replacing the population moments of $\mu_r(Y)$ by the sample moments $\hat{\bs \mu}_{r}$. Further, $A^{2,r}=[A^{\otimes 2},A^{\otimes r}]$ can replaced by its estimate $\widehat A_{W_n}^{2,r}=[\widehat A_{W_n}^{\otimes 2},\widehat A_{W_n}^{\otimes r}]$. Combining we obtain the estimates 
\[
\widehat \Sigma_{\mu,n} = D_{\mathcal I}^{2,r} \widehat V D_{\mathcal I}^{2,r'}  \quad \text{and} \quad \widehat \Sigma_{\mathsf k,n} = D_{\mathcal I}^{2,r}  \widehat A_{W_n}^{2,r} \widehat F^{2,r}  \widehat H  (\widehat A_{W_n}^{2,r} \widehat F^{2,r}) ' D_{\mathcal I}^{2,r'}~.
\]
While these plug-in estimators are conceptually straightforward, for cumulants it does require determining the Jacobian $F^{2,r}$, which can be a tedious task. 

Therefore, for cumulant restriction we recommend estimating $\Sigma_h$ using a simple bootstrap. Let $\hat{\bm  \varepsilon}_n = \widehat A_{W_n} {\bf Y}_n $ denote the $n \times 1$ vector of residuals. We can resample these residuals (with replacement) to get $\hat{\bm  \varepsilon}_n^*$ and construct bootstrap draws of $\hat g_n(\widehat A_{W_n})$, say $g^*_n$. Repeating this $B$ times allows to compute the bootstrap variance estimate 
\[
\widehat \Sigma_n /n  = \frac{1}{B} \sum_{b=1}^B (g^{*,b}_n - \bar g^{*}_n) (g^{*,b'}_n- \bar g^{*}_n)' \qquad \text{with} \quad \bar g^{*}_n = \frac{1}{B}  \sum_{b=1}^B g^{*,b}_n~.
\]
The $1/n$ comes from the definition $\Sigma = \lim_{n\to \infty} {\rm var}(\sqrt{n} \hat g_n(QA_0) )$. Using the bootstrap has the benefit that no additional analytical calculations are needed and evaluating $g^{*,b}_n$ only requires computing the sample statistics $\mu_p(Y)$ or $\mathsf k_p$, for $p=2,r$, for each bootstrap draw $\hat{\bm  \varepsilon}_n^*$. 
The validity of the bootstrap follows as we have a random sample $\{Y_i \}$, $\widehat A_{W_n}$ is $\sqrt{n}$-consistent for $A$ and asymptotically normal (cf Propositions \ref{prop:consist} and \ref{prop:assnormal}) and $\hat g_n(A)$ is a polynomial map in $A$ and hence smooth.    

While the bootstrap is conceptually attractive, it is worth nothing that, at least in principle, the covariance between two $\mathsf{k}$-statistics  $\mathsf{k}_{i_1\cdots i_r}$ and $\mathsf{k}_{j_1\cdots j_r}$ can be computed exactly for any given sample size using the general formula for cumulants of $\mathsf{k}$-statistics as given in Section~4.2.3 in \cite{mccullagh2018tensor}. Although the covariance is arguably the simplest cumulant, the formula still involves combinatorial quantities that are hard to obtain. Given the moments of $Y$, we could also use the explicit formula \eqref{eq:kstatinY} to obtain the covariance in any given case by noting that  
$$
\E {\rm vec}(\mathsf{k}_r){\rm vec}(\mathsf{k}_r)'\;=\;\frac{1}{n^{2}}\E\left[(\bs Y')^{\otimes r}{\rm vec}(\Phi){\rm vec}(\Phi)' \bs Y^{\otimes r}\right].
$$
Note however that ${\rm vec}(\Phi)$ has $n^r$ entries with many of them repeated, so the naive approach is very inefficient. An efficient, perhaps umbral, approach to these symbolic computations could help to obtain better estimates of $A$.

Next, we compute the asymptotic variance $S = (G' \Sigma^{-1} G )^{-1}$, where $G = G(QA_0)$ is the Jacobian matrix corresponding to $g(A)$. Combining the estimator $\widehat A_{W_n}$ and the map \eqref{eq:jaccol} provides the estimate for $G$. Combining this an estimate for $\Sigma$ as defined above allows to estimate $S$.

\section{Additional numerical results}\label{appsec:simsadditional} 

In this section we provide additional simulation results that complement Section~\ref{sec:sims}. We compare the performance of the minimum distance estimators across different measures, dimensions and sample sizes. 

\subsection{Alternative performance measures} 

We start by providing the same results as in the main text but now measuring the accuracy of the different procedures in terms of the Frobenius distance $d_F$ \citep[e.g.][]{chenbickel} which is often referred to as the minimum distance index \citep[e.g.][]{IlmonenNordhausenOjaOllila.10} and can be defined as  
\[
d_F(\widehat A_{W_n} , A_0 ) = \min_{Q \in SP(d)} \frac{1}{d^2} \| \widehat  A_{W_n}^{-1}QA_0  - I_d \|_F~, 
\]
where the scaling by $d^2$ is an arbitrary choice. 

For this distance measure Tables \ref{tab:estimatesbaselinecv_frobi} and \ref{tab:estimatesbaselinese_frobi} replicate Tables \ref{tab:estimatesbaselinecv} and \ref{tab:estimatesbaselinese} from the main text. We find that the minimum distance estimator based on the reflectionally invariant restrictions remains to perform well across all specifications. Also for skewed densities the minimum distance estimator based on the diagonal third order tensor restrictions performs well. 

For the common variance model, i.e. Table \ref{tab:estimatesbaselinecv_frobi}, there are a few differences with respect to the Amari errors that are worth pointing out. First, TICA performs relatively less well. Further inspection showed that is largely due to the small sample size and the performance of TICA improves considerably when $n$ increases.  Second, some of the ICA methods (e.g. FastICA and JADE) perform well for $t(5)$, SKU and KU densities.  

For the multiple scaled elliptical model, i.e. Table \ref{tab:estimatesbaselinese_frobi}, the results are very similar when compared to the Amari errors, and the minimum distance estimator based on the reflectionally invariant restrictions is always preferred.

\subsection{Larger experiments common variance model} 

In the main text in Figure \ref{fig:commonvariance} we showed the results for the common variance model with $d=5$ and $n=200,1000$ corresponding to two specific distributions for the errors $\eta_i$: the $t(5)$ distribution as well as the Bi-Modal distribution $BM$. Here we show the same results but also include the other densities from Table \ref{tab:ng_dist}. 

Specifically, Figures \ref{fig:commonvariance1}-\ref{fig:commonvariance3} show all experiments that we conducted for the common variance model. The following additional results are worth mentioning. First, when the true errors correspond to the normal distribution the variances of all estimators are large and do not shrink noticeably when $n$ increases. The reason under normal errors for $\eta_i$ the deviations from the Gaussian distribution of $\varepsilon_i = \tau \eta_i$, with independent $\tau \sim {\rm gamma}(1,1)$, is very close to the Gaussian distribution and hence the parameters are poorly identified. 

Second, for $n = 1000$ we find that the diagonal tensor restrictions based on the third moments work well for the Skewed Unimodal (SKU) density. For $n=200$ the evidence is not convincing, but for larger sample sizes these restrictions in combination with the efficient weighting matrix yield good performance. Only TICA, which assumes that $K$ is known, leads to better performance. 

Third, in general TICA works well for Student's $t$ type densities like, $t(5)$ and Kurtotic Unimodal (KU). The reason is that, in addition to exploiting knowledge of $K$, the objective function of TICA is close in shape to the Student's $t$ density \cite[see][equation 3.10]{hyvarinen2001topographic}. As such TICA behaves like the MLE estimator for these densities. 

Fourth, for all other densities which impose larger deviations from the Student $t$ shape the estimators that were based on the reflectional invariant restrictions always perform better. The benefits are most clearly shown for bi modal densities.

Fifth, using the efficient weighting matrix shows most advantages for large sample sizes. The reason is that estimating the efficient weighting matrix accurately requires a large sample size. This improvement in weighting matrix accuracy is directly reflected in the Amari errors. 

\subsection{Larger experiments multiple scaled elliptical} 

Next, we revisit the nICA model with multiple scaled elliptical errors as presented in  \eqref{eq:generalmultielliptical}. For this model comparative simulation results were shown in Section \ref{ssec:multscaledellip} for $d=2$ and $n=200$. Here we consider the specifications where $d=5$ and $n=200,1000$. Figures  \ref{fig:scaledelliptical1}-\ref{fig:scaledelliptical3} show the results. Overall, the results for the scaled elliptical components model are quite similar across the densities for $\eta_i$. As in Table \ref{tab:estimatesbaselinese} the means of the Amari errors are roughly equal, but there exist some variations in the variances. First, except for the Gaussian density (which does not yield an identified model) when $n$ increases the variances generally decrease. Second, the evidence in favor of the efficient weighting matrix is mixed often the identity weighting matrix is preferred. This is most likely due to the fact that the multiple scaled elliptical model has quite heavy tails which may invalidate the moment assumptions needed for the consistent estimation of the weighting matrix, or at least reduce the accuracy of the weighting matrix estimate.

\begin{table}
	\caption{\sc Minimum Distance: Common Variance Model   }
	\begin{center}
		\begin{tabular}{lcccccccccc} 
			\hline\hline 
			& \multicolumn{10}{c}{Non-Independent Components Analysis} \\[0.5ex]
			Method       & $\mathcal N$ & $t(5)$ & SKU & KU & BM & SBM & SKB & TRI & CL & ACL \\   \hline 
			$\mu_3^{d,I} $   & 0.16 &   0.13 &   0.13  &  0.13  &  0.18 &   0.20  &  0.16  &  0.18  &  0.16  &  0.15 \\
			$\mu_3^{d, \widehat \Sigma_n^{-1}} $ &0.14  &  0.12 &   0.11  &  0.12 &   0.15  &  0.17 &   0.13 &   0.15  &  0.13 &   0.13 \\
			$\mu_4^{r,I}$ & 0.11   & 0.12  &  0.12  &  0.11  &  0.10 &   0.08 &   0.11  &  0.10  &  0.12  &  0.11 \\
			$\mu_4^{r,\widehat \Sigma_n^{-1}}$ & 0.11  &  0.11  &  0.11  &  0.10 &   0.09  &  0.05  &  0.10 &   0.08  &  0.10  &  0.11 \\ 
			& & & & & & & & & & \\
			TICA & 0.19  &  0.18  &  0.18  &  0.18  &  0.23 &   0.25 &   0.22  &  0.24 &  0.20  &  0.20 \\ 
			\hline\hline 
			& \multicolumn{10}{c}{Independent Components Analysis} \\ [0.5ex]
			Method  	& $\mathcal N$ & $t(5)$ & SKU & KU & BM & SBM & SKB & TRI & CL & ACL \\   \hline  
			Fast  & 0.14 &   0.09 &   0.11  &  0.08 &   0.20 &   0.25 &   0.18   & 0.21  &  0.15  &  0.15 \\
			JADE & 0.15  &  0.10  &  0.11   & 0.07  &  0.23  &  0.26 &   0.21   & 0.23  &  0.18  &  0.17 \\
			Kernel  & 0.15   & 0.11 &   0.12&   0.10 &   0.20  &  0.25&    0.18   & 0.21  &  0.16  &  0.16 \\
			ProDen   &0.15   & 0.79  &  0.30 & 0.64 &   0.23  & 0.60 &   0.21   & 0.26  &  0.17  &  2.78 \\	
			Efficient  & 0.14&    0.17 &   0.16 &   0.17 &   0.14 &    0.15 &   0.14  &  0.14  &  0.14&    0.14 \\
			NPML	 & 0.14  &  0.14  &  0.14   & 0.14   & 0.16  &  0.15   & 0.15  &  0.15  &  0.15  &  0.15 \\	
			\hline\hline 
			& & & & & & & & & & \\
			\multicolumn{11}{l}{ \begin{minipage}{12.5cm} \footnotesize \emph{Notes:} The table reports the average Minimum Distance Index (across $S=1000$ simulations) for data sampled from the common variance model \eqref{eq:generalcommonvariance} with $d=2$ and $n=200$. The columns correspond to the different errors considered for the components of $\eta$, see Table \ref{tab:ng_dist}. The top panel reports the errors for the minimum distance methods and Topographical ICA (TICA). For the minimum distance methods we consider diagonal ($d$) and reflectionally invariant ($r$) restrictions for different order tensors $\mu_3, \mu_4$, combined with weighting matrices $W_n = I_d, \widehat \Sigma_n^{-1}$. The bottom panel reports comparison results for different independent component analysis methods: FastICA \citep {Hyvarinen.99}, JADE \cite{CardosoSouloumiac1993}, kernel ICA \citep{BachJordan.03}, ProDenICA \citep{HastieTibshirani.02}, efficient ICA \citep{chenbickel} and non-parametric ML ICA \citep{SamworthYuan_2012}. \end{minipage}  }
		\end{tabular} 
	\end{center}
	\label{tab:estimatesbaselinecv_frobi}
\end{table}

\begin{table}
	\caption{\sc Minimum Distance: Scaled Elliptical \ \ \ \ \  \ \ \ \ \ \ \ \ }
	\begin{center}
		\begin{tabular}{lcccccccccc} 
			\hline\hline 
			& \multicolumn{10}{c}{Non-Independent Components Analysis} \\[0.5ex]
			Method       & $\mathcal N$ & $t(5)$ & SKU & KU & BM & SBM & SKB & TRI & CL & ACL \\   \hline 
			$\mu_3^{d, I} $   &0.14  &  0.14  &  0.14 &   0.14  &  0.14 &   0.14  &  0.14  &  0.14  &  0.14 &   0.14 \\
			$\mu_3^{d, \widehat \Sigma_n^{-1}} $   &0.14  &  0.13  &  0.13  &  0.13  &  0.14  &  0.14 &   0.14 &   0.14 &   0.13 &   0.13 \\
			$\mu_4^{r, I} $   & 0.08   & 0.09  &  0.09   & 0.08 &   0.08 &   0.08  &  0.09  &  0.09 &   0.09 &   0.09 \\
			$\mu_4^{r, \widehat \Sigma_n^{-1}} $   &0.09  &  0.09  &  0.09   & 0.09  &  0.09  &  0.09 &   0.08 &   0.08 &   0.08  &  0.08 \\
			& & & & & & & & & & \\
			TICA & 0.15   & 0.16 &   0.15 &   0.16  &  0.14  &  0.15  &  0.15 &  0.15 &   0.15   & 0.15 \\
			& & & & & & & & & & \\\hline\hline 
			& \multicolumn{10}{c}{Independent Components Analysis} \\ [0.5ex]
			Method  	& $\mathcal N$ & $t(5)$ & SKU & KU & BM & SBM & SKB & TRI & CL & ACL \\   \hline  
			Fast  &0.14   & 0.13   & 0.14 &   0.14   & 0.13 &   0.14 &   0.14  &  0.14 &    0.13 &   0.14 \\
			JADE & 0.14 &   0.14 &   0.14  &  0.14   & 0.14  &  0.14 &   0.15   & 0.14 &    0.14 &   0.14 \\
			Kernel & 0.14   & 0.14 &   0.14 &   0.14 &   0.14 &   0.14  & 0.14  &  0.14 &    0.14 &   0.14 \\
			ProDen   &0.14  &  0.14  &  0.14 &   0.14  &  0.14 &   0.14   & 0.14   & 0.14 &    0.14 &   0.15 \\	
			Efficient &0.14 &   0.14 &   0.14 &   0.14 &   0.14 &   0.14 &   0.14   & 0.14&    0.14 &   0.14 \\
			NPML   & 0.14   & 0.14  &  0.14   & 0.14  &  0.14  &  0.14  &  0.14   & 0.14&    0.14 &   0.14 \\	
			\hline\hline 
			& & & & & & & & & & \\
			\multicolumn{11}{l}{ \begin{minipage}{12.5cm} \footnotesize \emph{Notes:} The table reports the average Minimum Distance errors (across $S=1000$ simulations) for data sampled from the multiple scaled elliptical model \eqref{eq:generalmultielliptical} with $d=2$ and $n=200$. The columns correspond to the different errors considered for the components of $\eta$, see Table \ref{tab:ng_dist}. The top panel reports the errors for the minimum distance methods and Topographical ICA (TICA). For the minimum distance methods we consider diagonal ($d$) and reflectionally invariant ($r$) restrictions for different order tensors $\mu_3, \mu_4$, combined with weighting matrices $W_n = I_d, \widehat \Sigma_n^{-1}$. The bottom panel reports comparison results for different independent component analysis methods: FastICA \citep {Hyvarinen.99}, JADE \cite{CardosoSouloumiac1993}, kernel ICA \citep{BachJordan.03}, ProDenICA \citep{HastieTibshirani.02}, efficient ICA \citep{chenbickel} and non-parametric ML ICA \citep{SamworthYuan_2012}. \end{minipage}  }
		\end{tabular}
	\end{center}
	\label{tab:estimatesbaselinese_frobi}
\end{table}

\clearpage

\begin{figure}
	\caption{\sc  Common Variance Experiments}\label{fig:commonvariance1}
	\begin{center}
		\begin{subfigure}{	
				\includegraphics[scale=0.45]{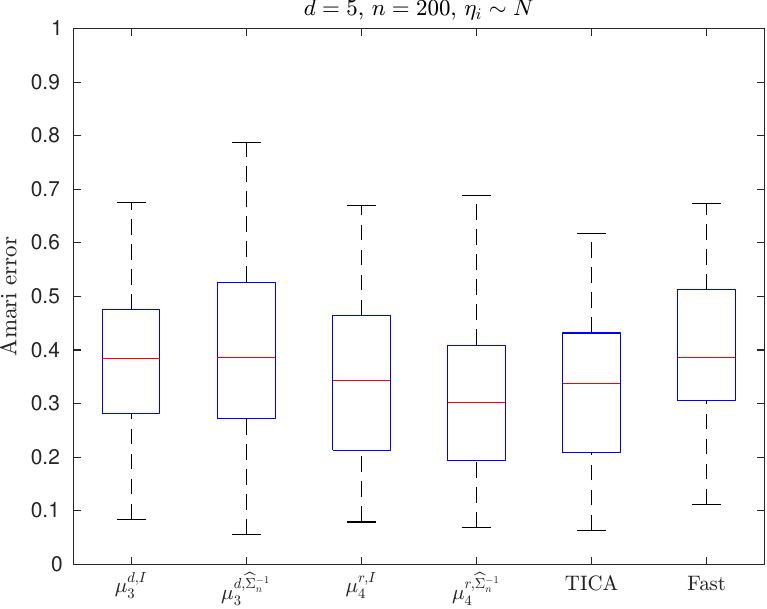}\hfill} 
		\end{subfigure}
		\begin{subfigure}{	
				\includegraphics[scale=0.45]{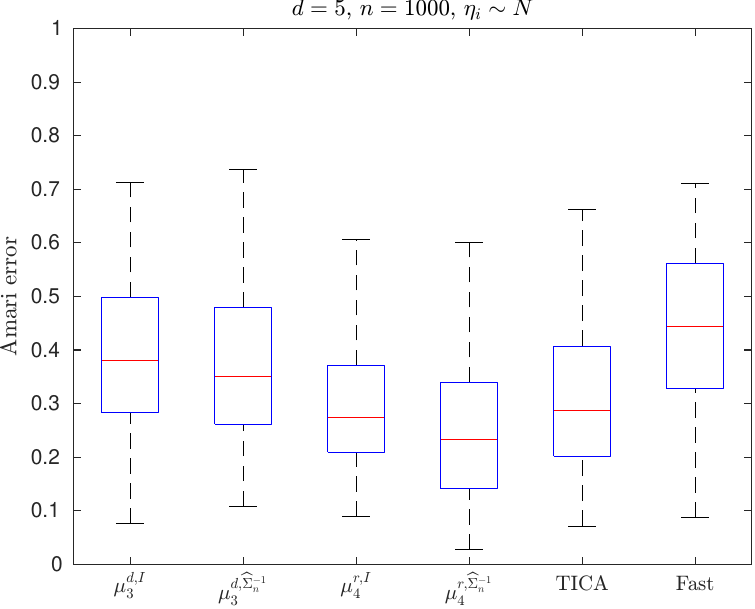}} 
		\end{subfigure}
	\begin{subfigure}{	
			\includegraphics[scale=0.45]{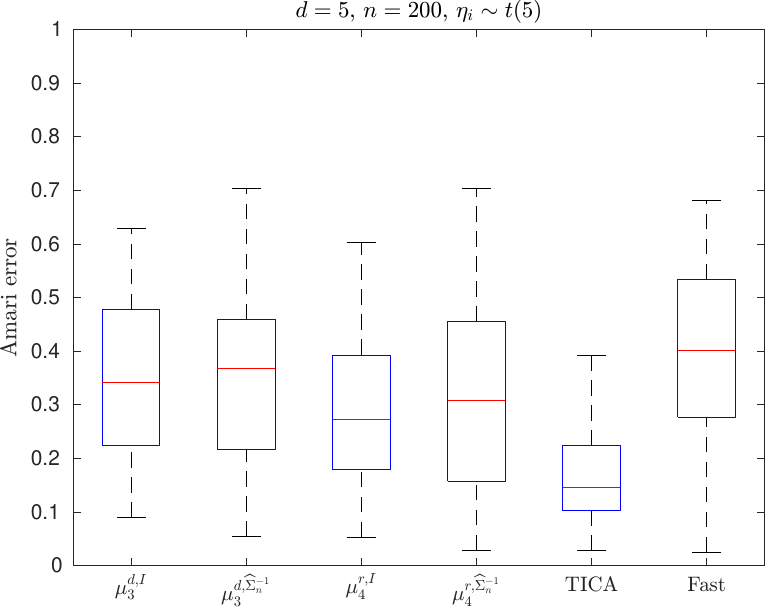}\hfill} 
	\end{subfigure}
	\begin{subfigure}{	
			\includegraphics[scale=0.45]{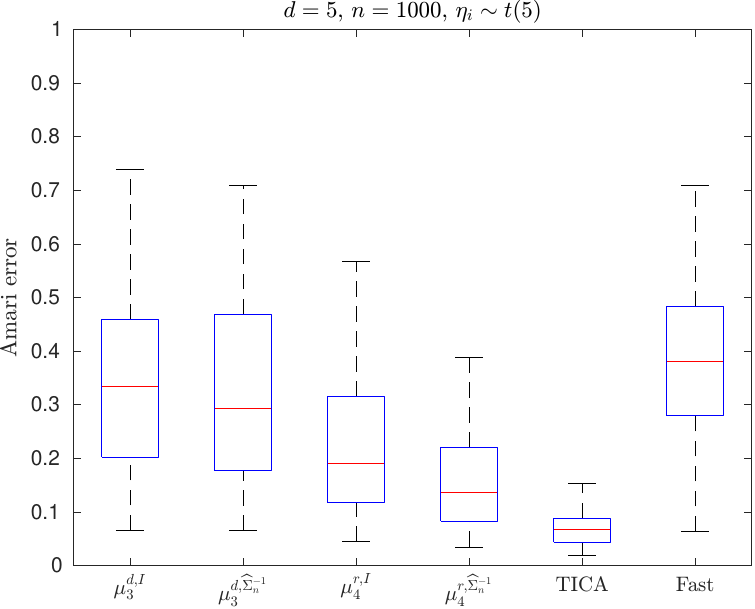}} 
	\end{subfigure}
	\begin{subfigure}{	
			\includegraphics[scale=0.45]{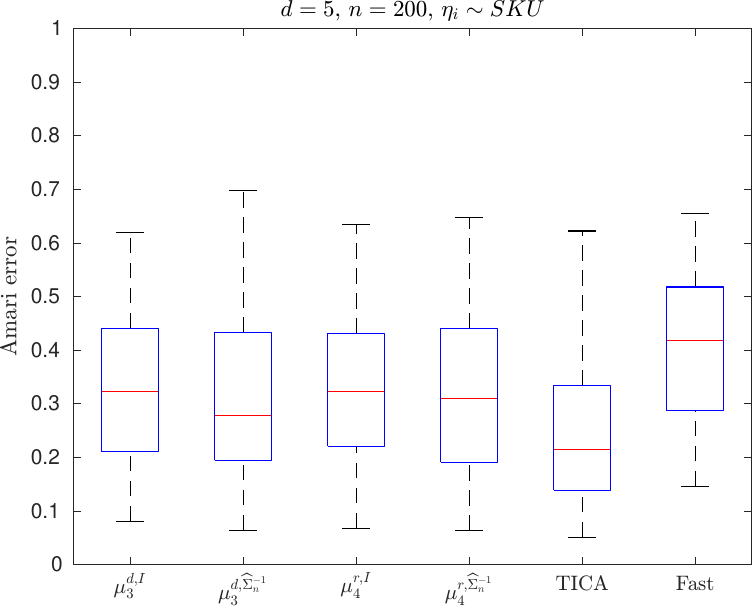}\hfill} 
	\end{subfigure}
	\begin{subfigure}{	
			\includegraphics[scale=0.45]{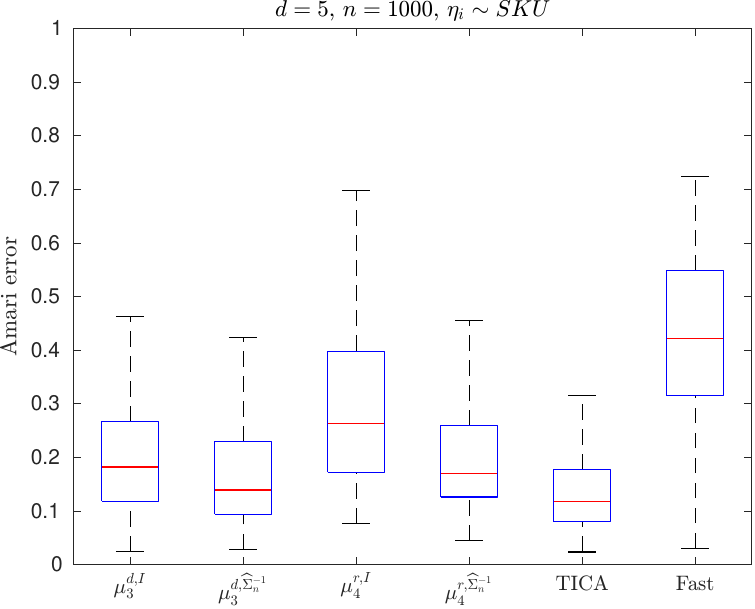}} 
	\end{subfigure}
	\begin{subfigure}{	
			\includegraphics[scale=0.45]{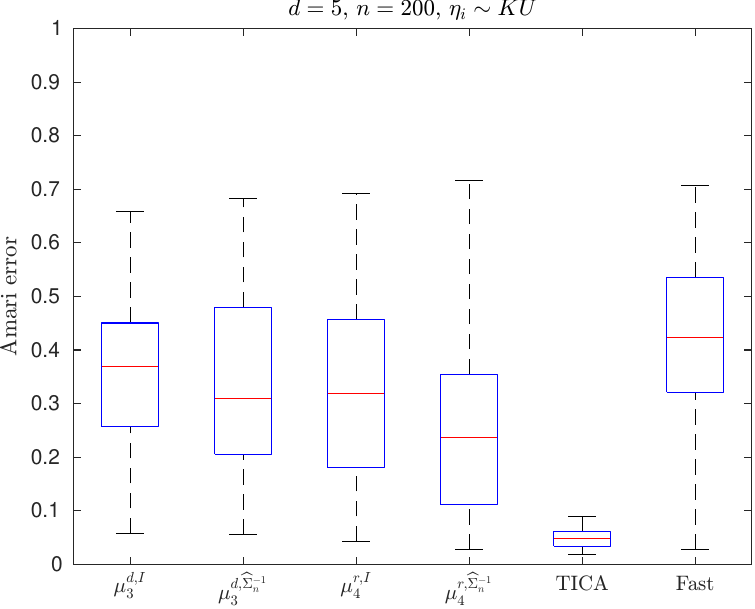}\hfill} 
	\end{subfigure}
	\begin{subfigure}{	
			\includegraphics[scale=0.45]{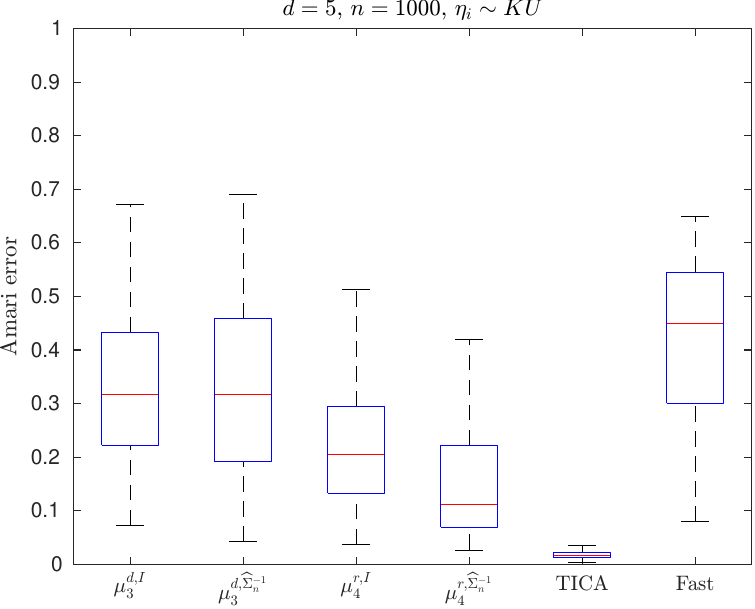}} 
	\end{subfigure}
	\end{center}
	{\begin{minipage}{12.5cm} \footnotesize \emph{Notes:} The figure shows the boxplots for the Amari errors (across $S=100$ simulations) for data sampled from the common variance model \eqref{eq:generalcommonvariance}. The different settings for the simulations designs are described in the titles and the $x$-labels indicate the different estimation methods used. \end{minipage} }
\end{figure}

\begin{figure}
	\caption{\sc  Common Variance Experiments}\label{fig:commonvariance2}
	\begin{center}
		\begin{subfigure}{	
				\includegraphics[scale=0.45]{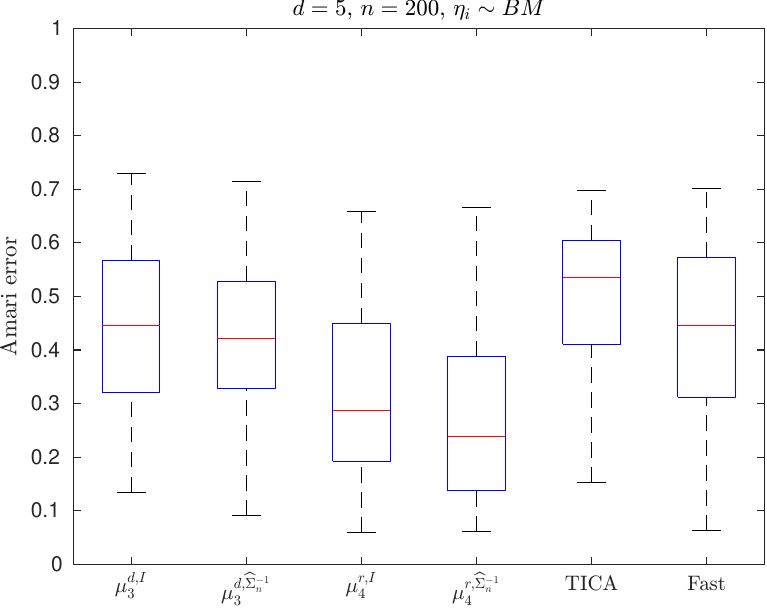}\hfill} 
		\end{subfigure}
		\begin{subfigure}{	
				\includegraphics[scale=0.45]{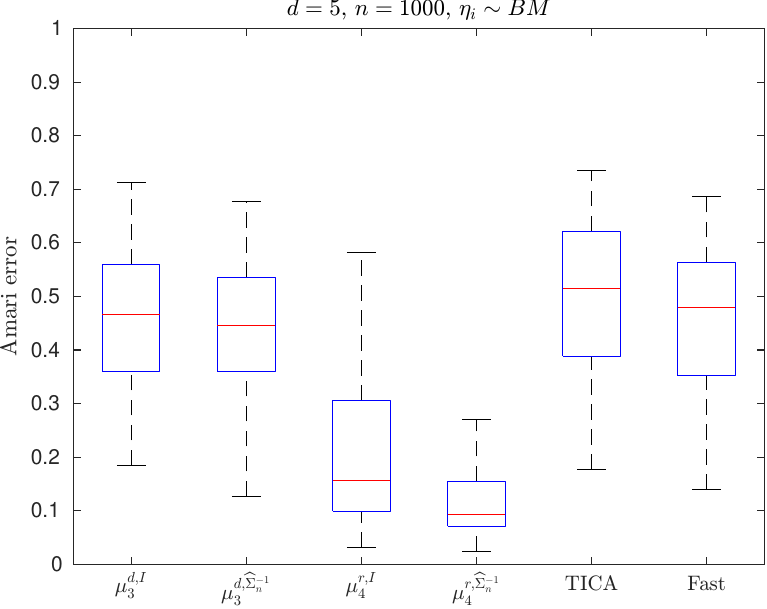}} 
		\end{subfigure}
		\begin{subfigure}{	
				\includegraphics[scale=0.45]{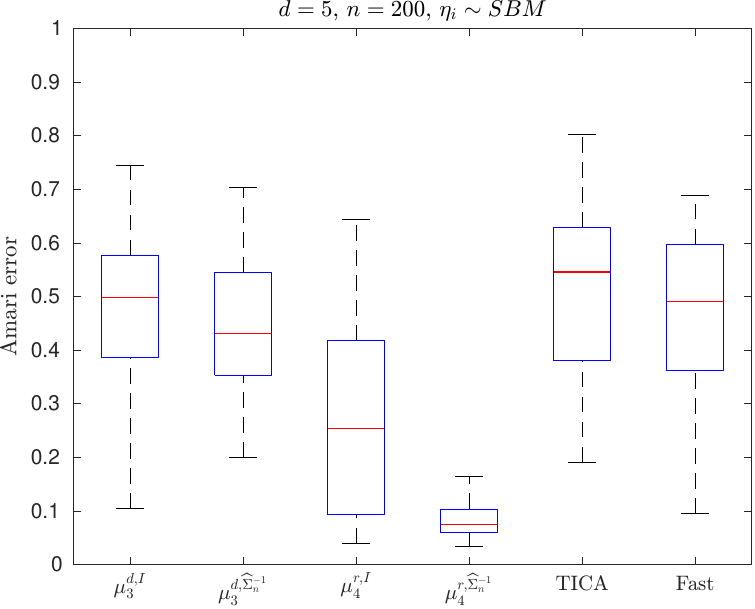}\hfill} 
		\end{subfigure}
		\begin{subfigure}{	
				\includegraphics[scale=0.45]{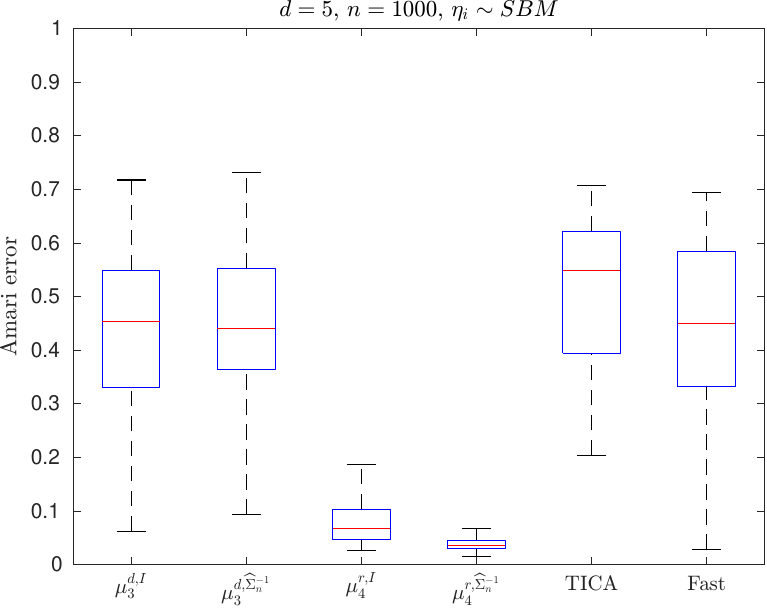}} 
		\end{subfigure}
		\begin{subfigure}{	
				\includegraphics[scale=0.45]{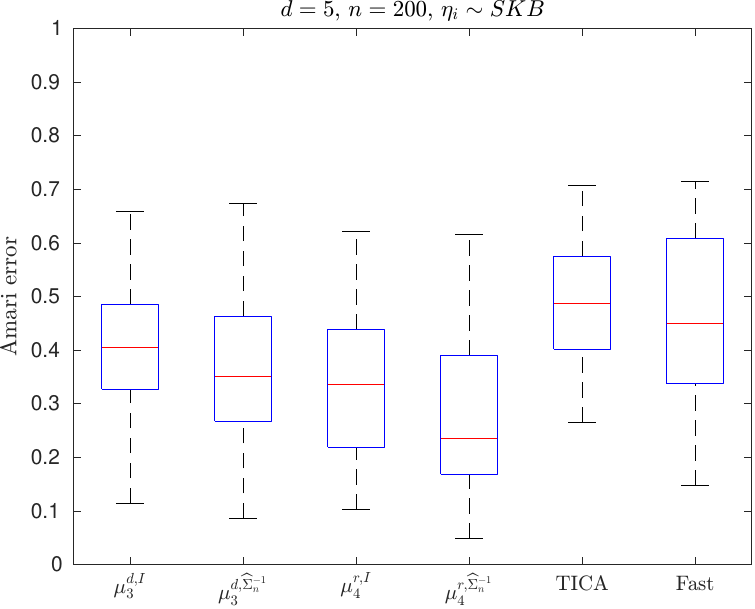}\hfill} 
		\end{subfigure}
		\begin{subfigure}{	
				\includegraphics[scale=0.45]{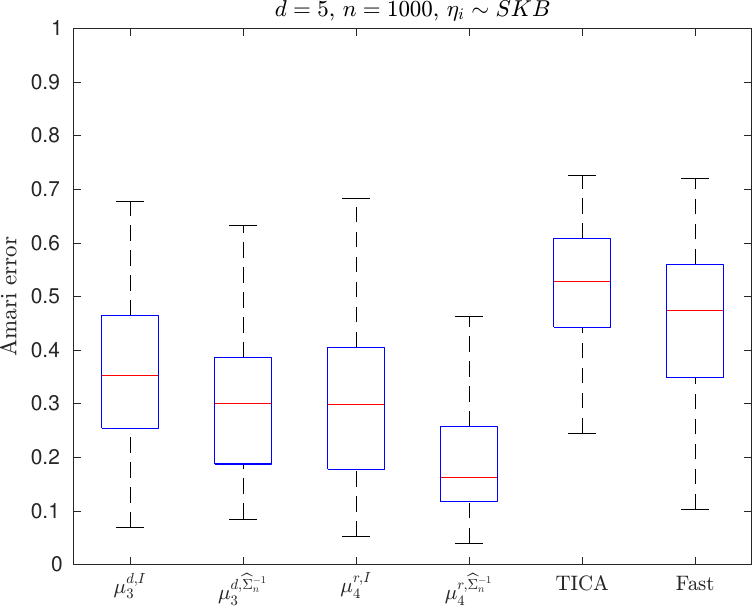}} 
		\end{subfigure}
		\begin{subfigure}{	
				\includegraphics[scale=0.45]{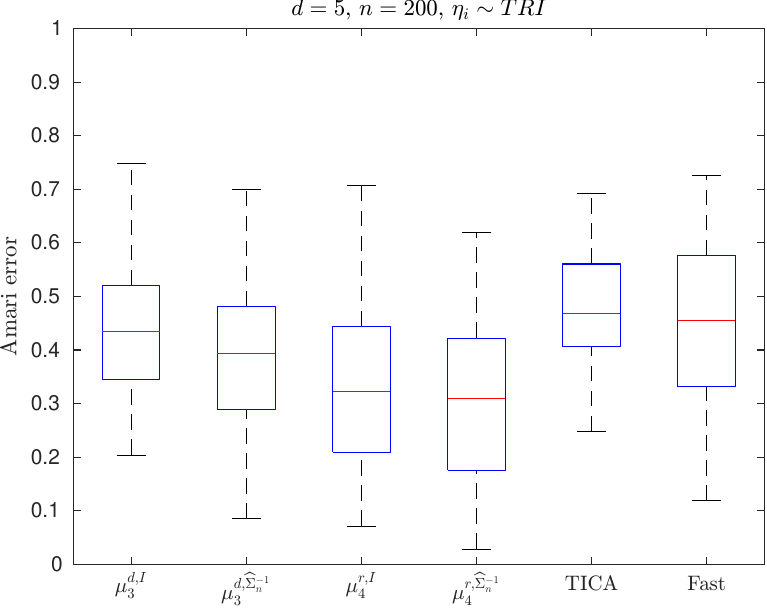}\hfill} 
		\end{subfigure}
		\begin{subfigure}{	
				\includegraphics[scale=0.45]{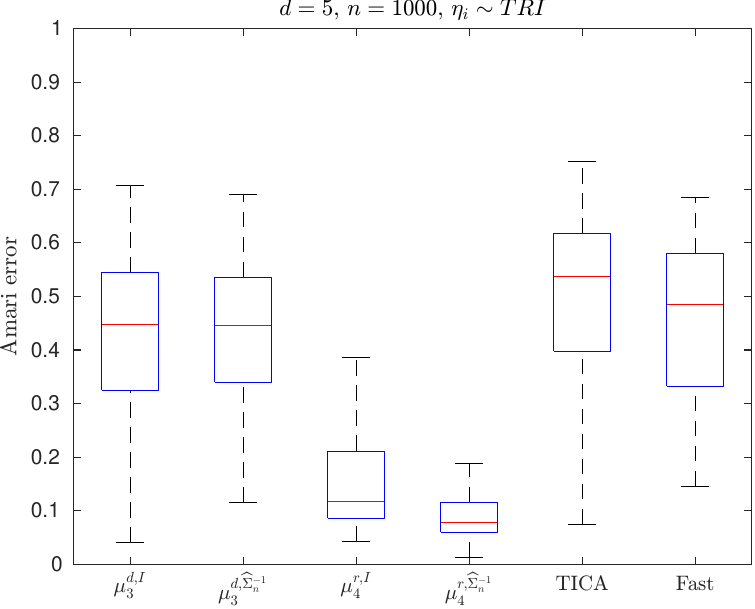}} 
		\end{subfigure}
	\end{center}
	{\begin{minipage}{12.5cm} \footnotesize \emph{Notes:} The figure shows the boxplots for the Amari errors (across $S=100$ simulations) for data sampled from the common variance model \eqref{eq:generalcommonvariance}. The different settings for the simulations designs are described in the titles and the $x$-labels indicate the different estimation methods used. \end{minipage} }
\end{figure}

\begin{figure}
	\caption{\sc  Common Variance Experiments}\label{fig:commonvariance3}
	\begin{center}
		\begin{subfigure}{	
				\includegraphics[scale=0.45]{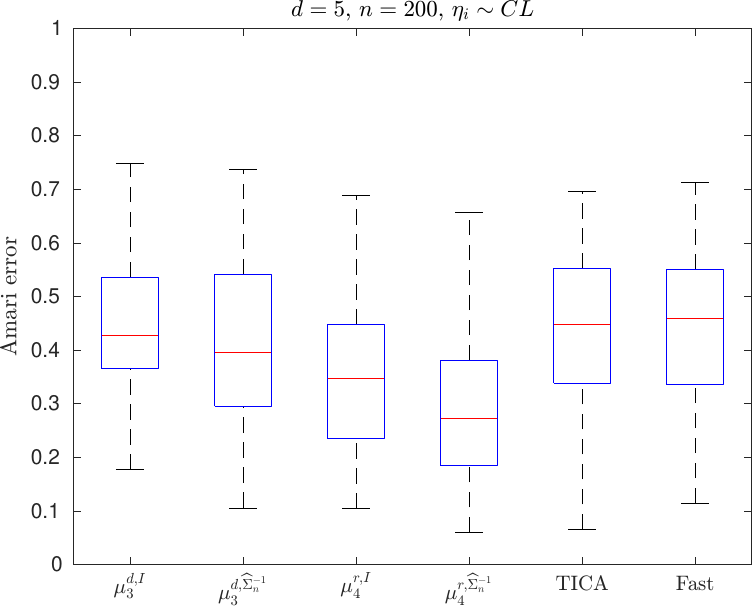}\hfill} 
		\end{subfigure}
		\begin{subfigure}{	
				\includegraphics[scale=0.45]{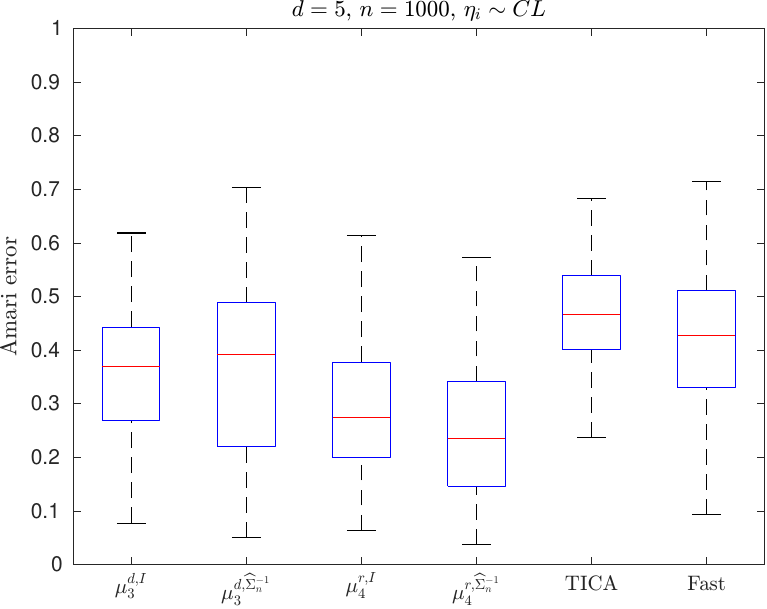}} 
		\end{subfigure}
		\begin{subfigure}{	
				\includegraphics[scale=0.45]{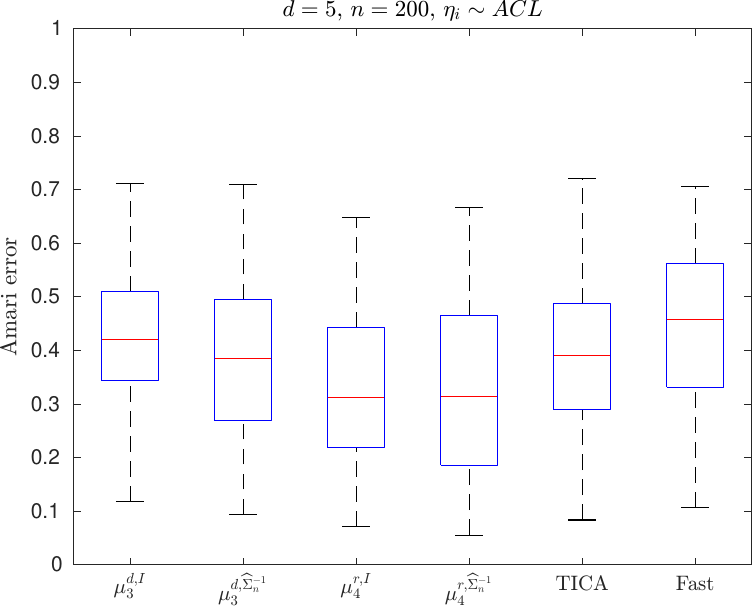}\hfill} 
		\end{subfigure}
		\begin{subfigure}{	
				\includegraphics[scale=0.45]{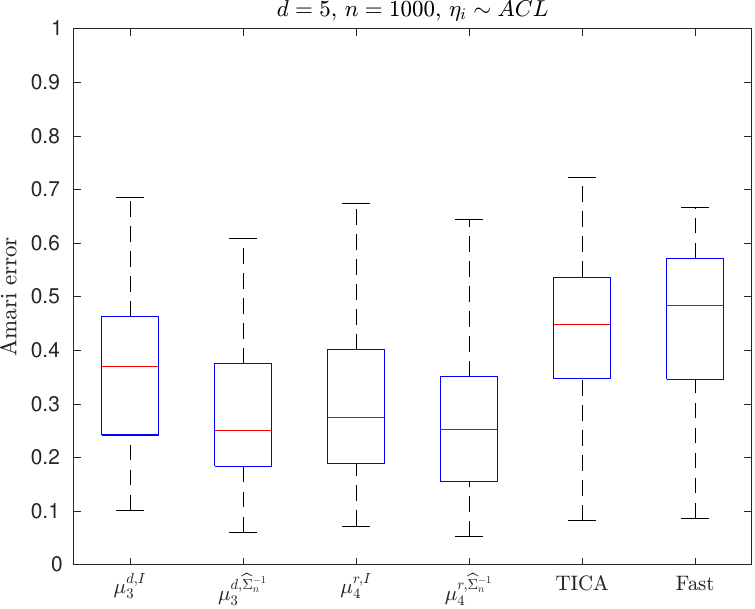}} 
		\end{subfigure}
	\end{center}
	{\begin{minipage}{12.5cm} \footnotesize \emph{Notes:} The figure shows the boxplots for the Amari errors (across $S=100$ simulations) for data sampled from the common variance model \eqref{eq:generalcommonvariance}. The different settings for the simulations designs are described in the titles and the $x$-labels indicate the different estimation methods used. \end{minipage} }
\end{figure}

\begin{figure}
	\caption{\sc  Scaled Elliptical Experiments}\label{fig:scaledelliptical1}
	\begin{center}
		\begin{subfigure}{	
				\includegraphics[scale=0.45]{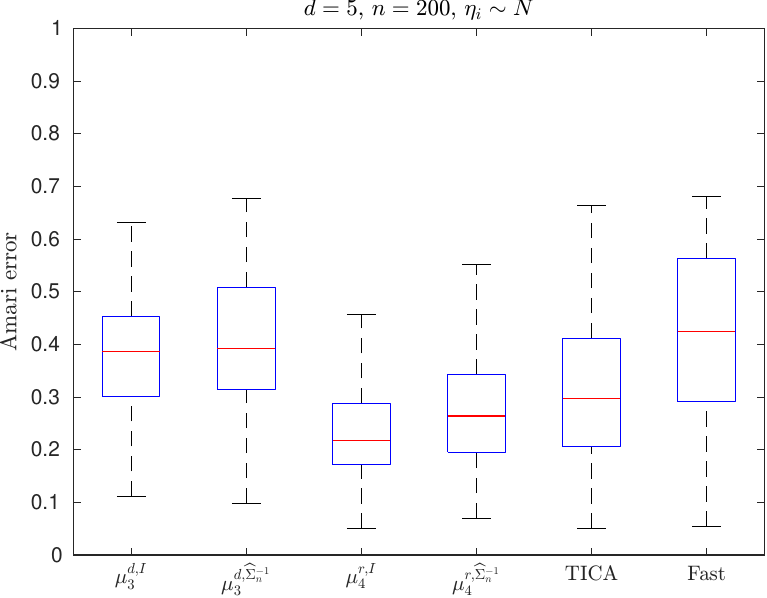}\hfill} 
		\end{subfigure}
		\begin{subfigure}{	
				\includegraphics[scale=0.45]{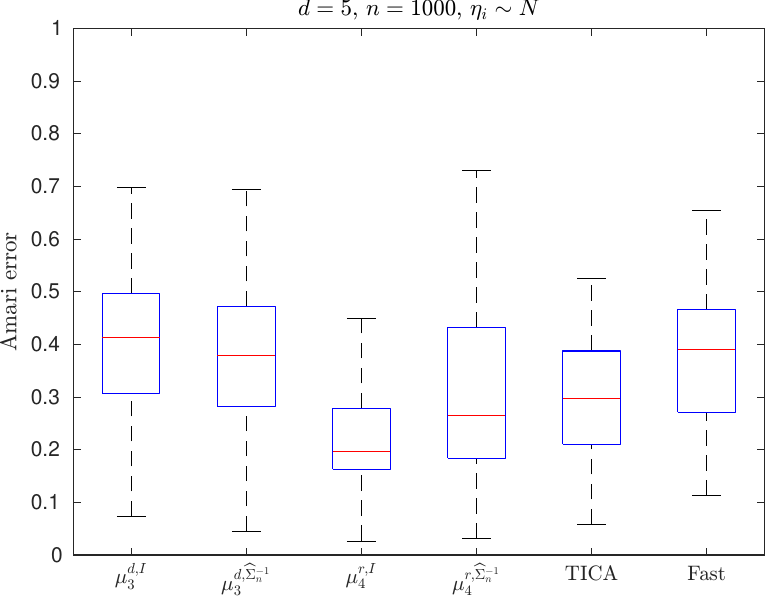}} 
		\end{subfigure}
		\begin{subfigure}{	
				\includegraphics[scale=0.45]{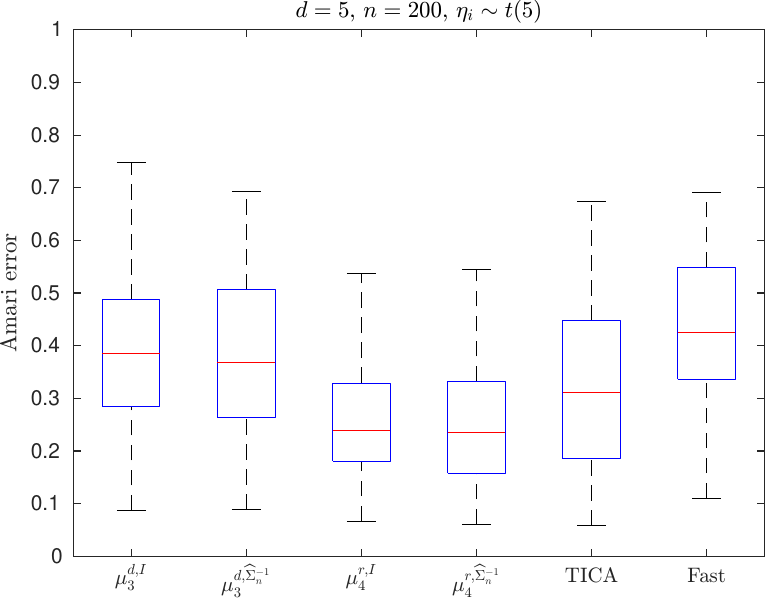}\hfill} 
		\end{subfigure}
		\begin{subfigure}{	
				\includegraphics[scale=0.45]{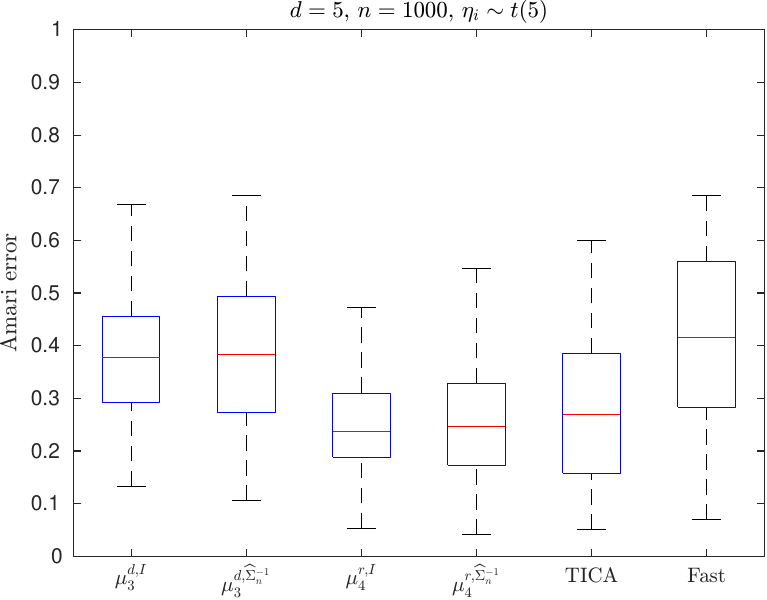}} 
		\end{subfigure}
		\begin{subfigure}{	
				\includegraphics[scale=0.45]{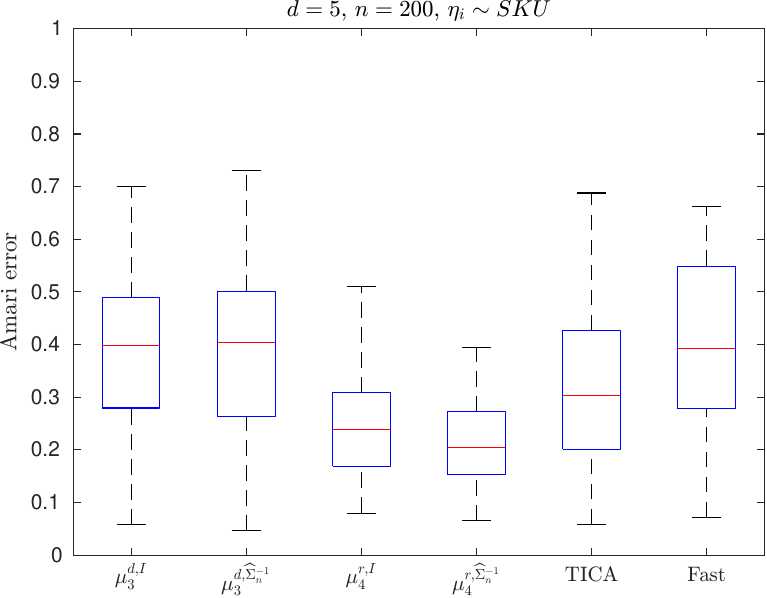}\hfill} 
		\end{subfigure}
		\begin{subfigure}{	
				\includegraphics[scale=0.45]{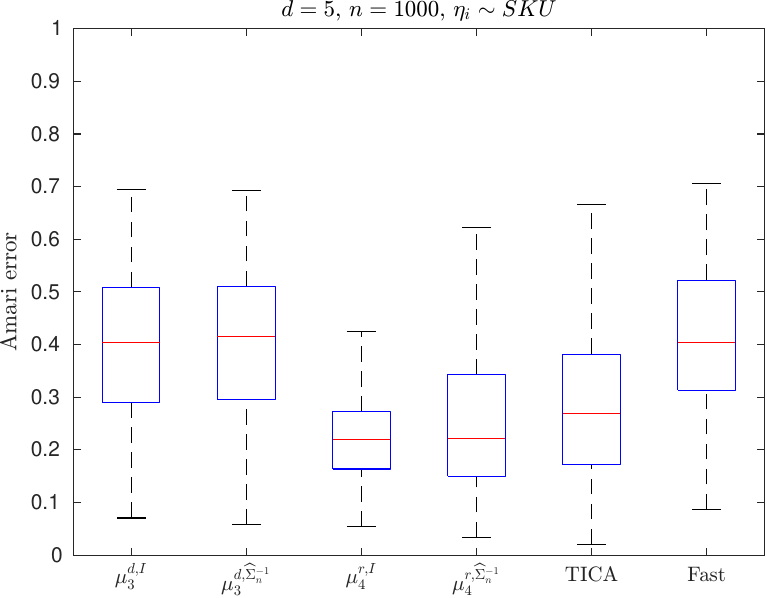}} 
		\end{subfigure}
		\begin{subfigure}{	
				\includegraphics[scale=0.45]{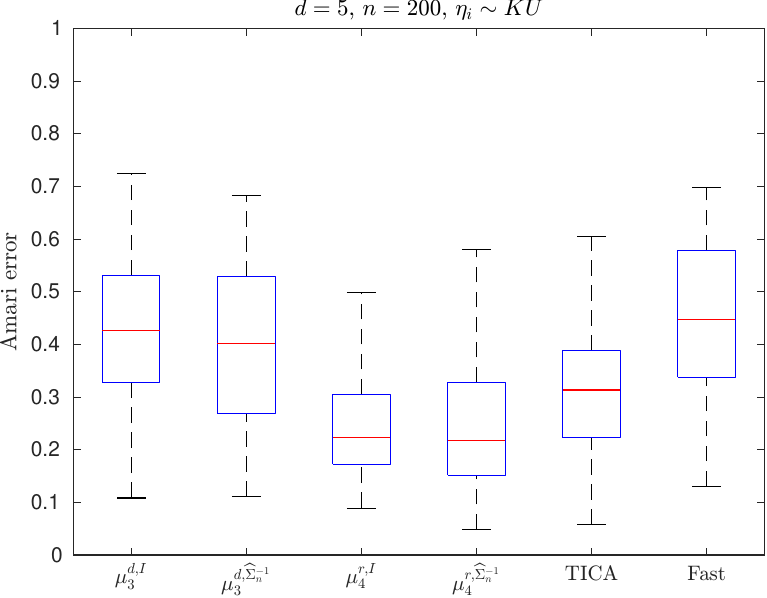}\hfill} 
		\end{subfigure}
		\begin{subfigure}{	
				\includegraphics[scale=0.45]{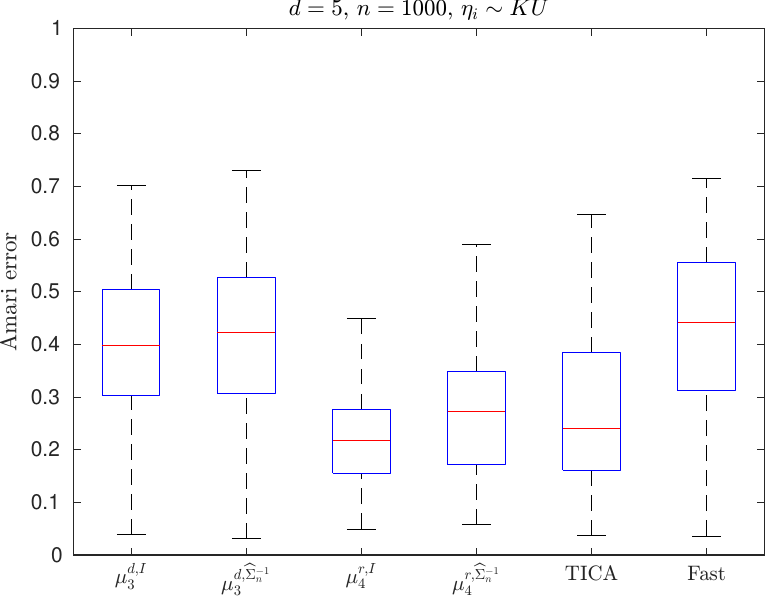}} 
		\end{subfigure}
	\end{center}
	{\begin{minipage}{12.5cm} \footnotesize \emph{Notes:} The figure shows the boxplots for the Amari errors (across $S=100$ simulations) for data sampled from the multiple scaled elliptical components model \eqref{eq:generalmultielliptical}. The different settings for the simulations designs are described in the titles and the $x$-labels indicate the different estimation methods used. \end{minipage} }
\end{figure}

\begin{figure}
	\caption{\sc  Scaled Elliptical Experiments}\label{fig:scaledelliptical2}
	\begin{center}
		\begin{subfigure}{	
				\includegraphics[scale=0.45]{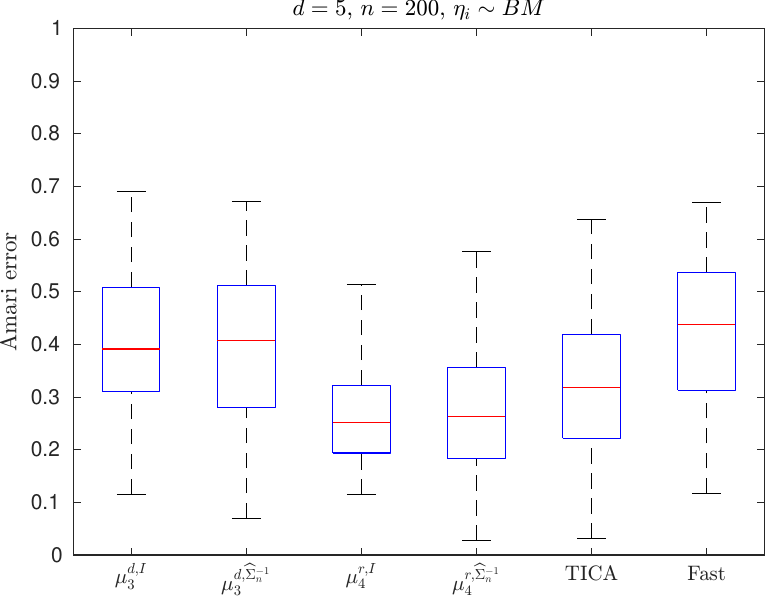}\hfill} 
		\end{subfigure}
		\begin{subfigure}{	
				\includegraphics[scale=0.45]{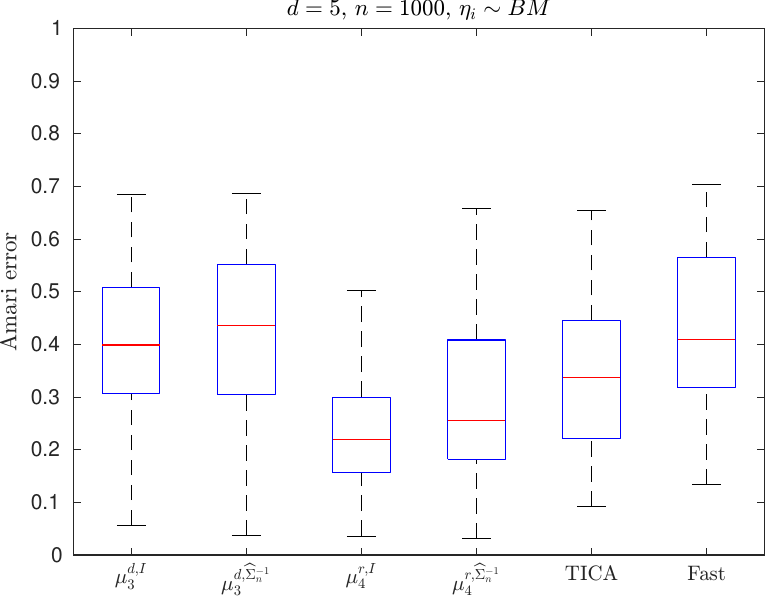}} 
		\end{subfigure}
		\begin{subfigure}{	
				\includegraphics[scale=0.45]{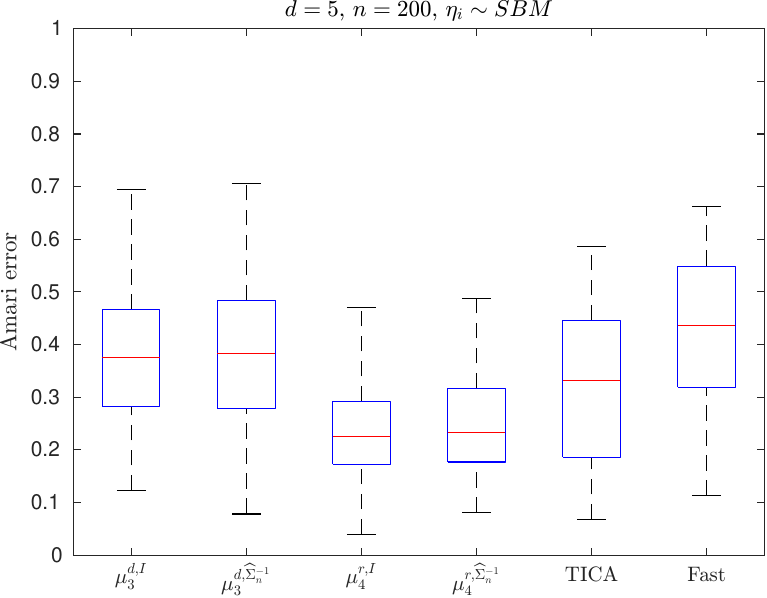}\hfill} 
		\end{subfigure}
		\begin{subfigure}{	
				\includegraphics[scale=0.45]{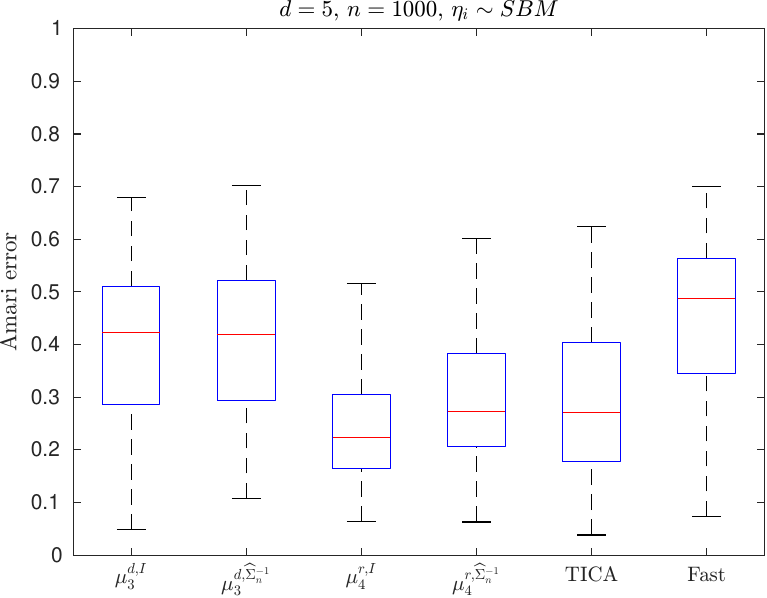}} 
		\end{subfigure}
		\begin{subfigure}{	
				\includegraphics[scale=0.45]{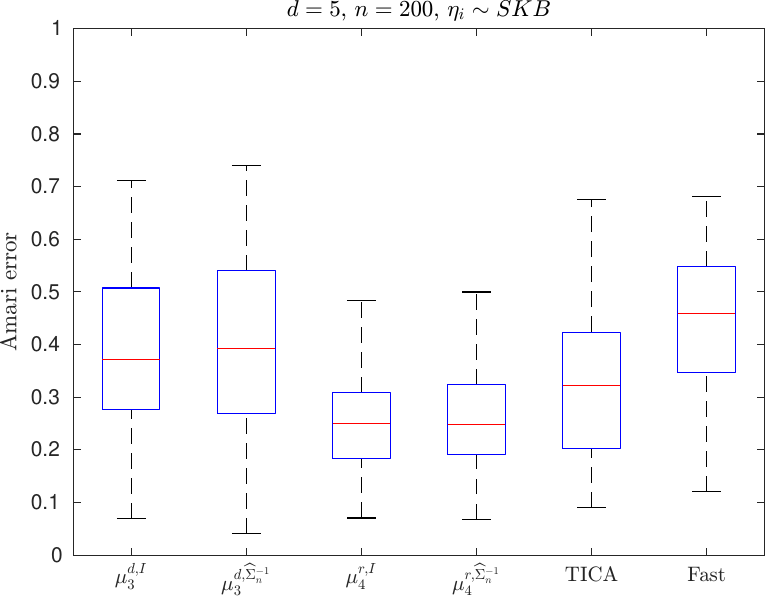}\hfill} 
		\end{subfigure}
		\begin{subfigure}{	
				\includegraphics[scale=0.45]{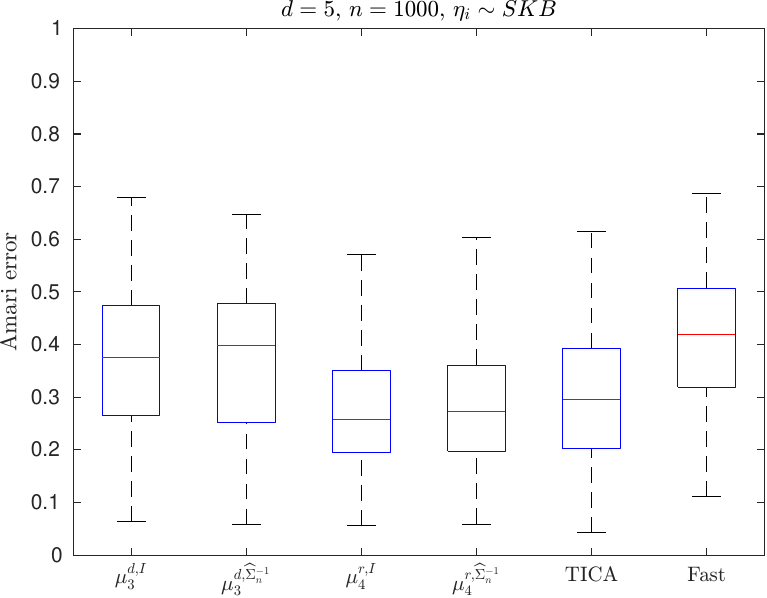}} 
		\end{subfigure}
		\begin{subfigure}{	
				\includegraphics[scale=0.45]{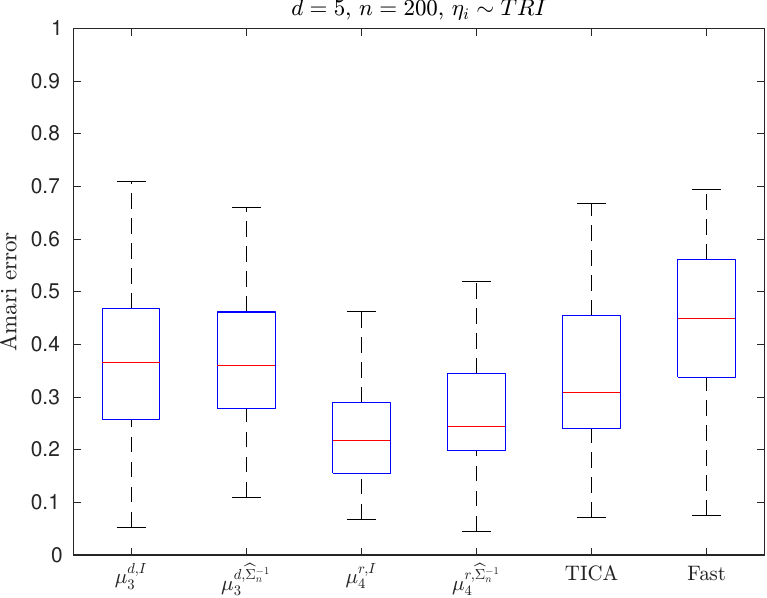}\hfill} 
		\end{subfigure}
		\begin{subfigure}{	
				\includegraphics[scale=0.45]{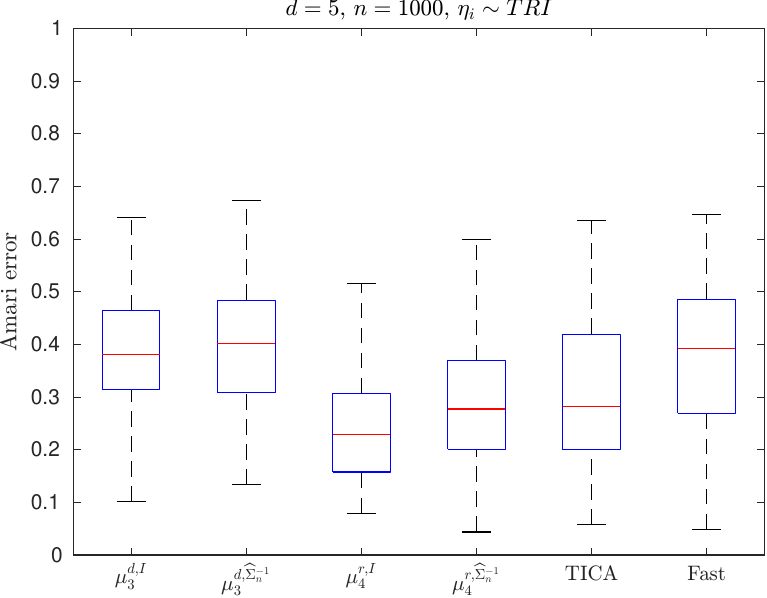}} 
		\end{subfigure}
	\end{center}
	{\begin{minipage}{12.5cm} \footnotesize \emph{Notes:} The figure shows the boxplots for the Amari errors (across $S=100$ simulations) for data sampled from the multiple scaled elliptical components model \eqref{eq:generalmultielliptical}. The different settings for the simulations designs are described in the titles and the $x$-labels indicate the different estimation methods used. \end{minipage} }
\end{figure}

\begin{figure}
	\caption{\sc  Scaled Elliptical Experiments}\label{fig:scaledelliptical3}
	\begin{center}
		\begin{subfigure}{	
				\includegraphics[scale=0.45]{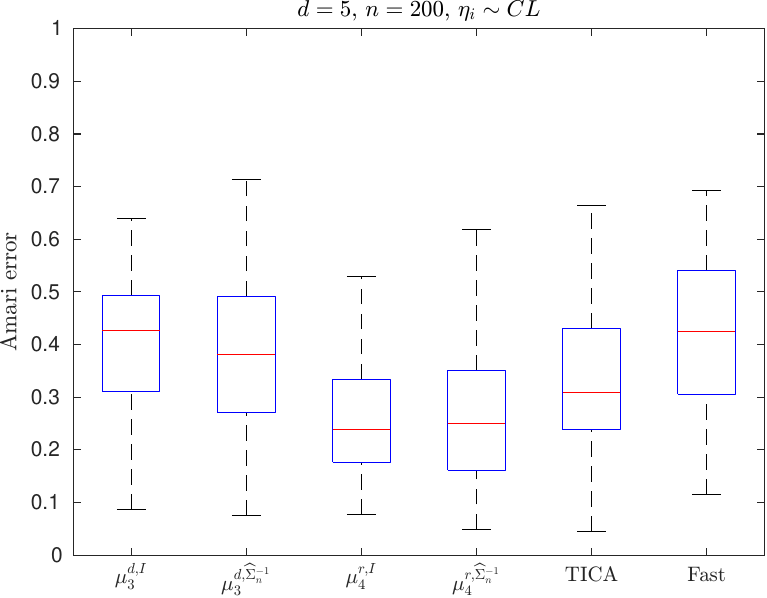}\hfill} 
		\end{subfigure}
		\begin{subfigure}{	
				\includegraphics[scale=0.45]{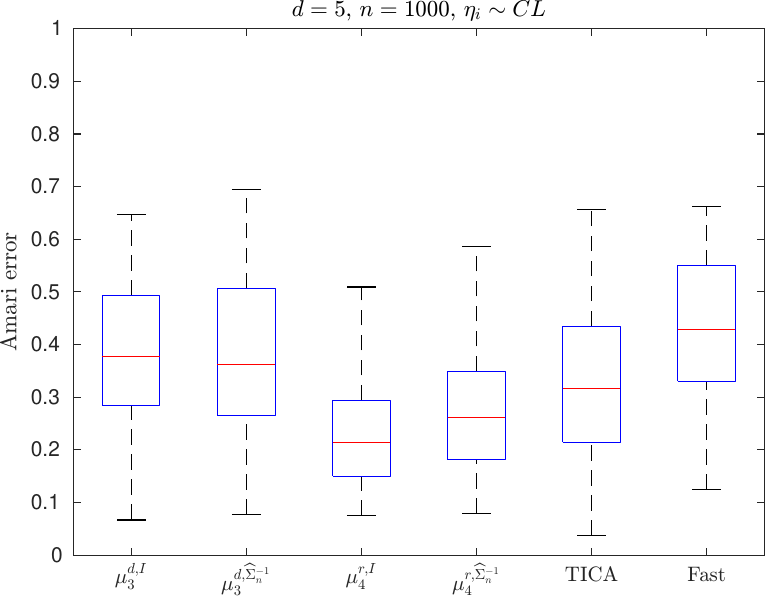}} 
		\end{subfigure}
		\begin{subfigure}{	
				\includegraphics[scale=0.45]{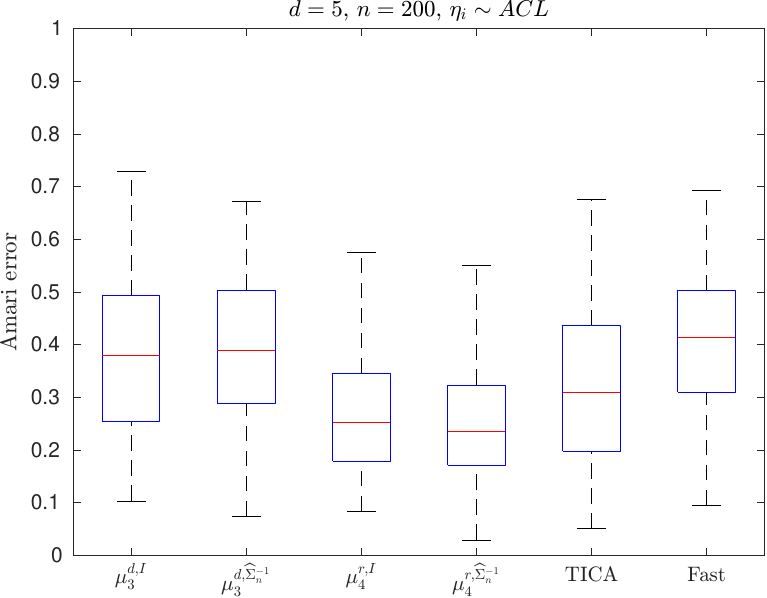}\hfill} 
		\end{subfigure}
		\begin{subfigure}{	
				\includegraphics[scale=0.45]{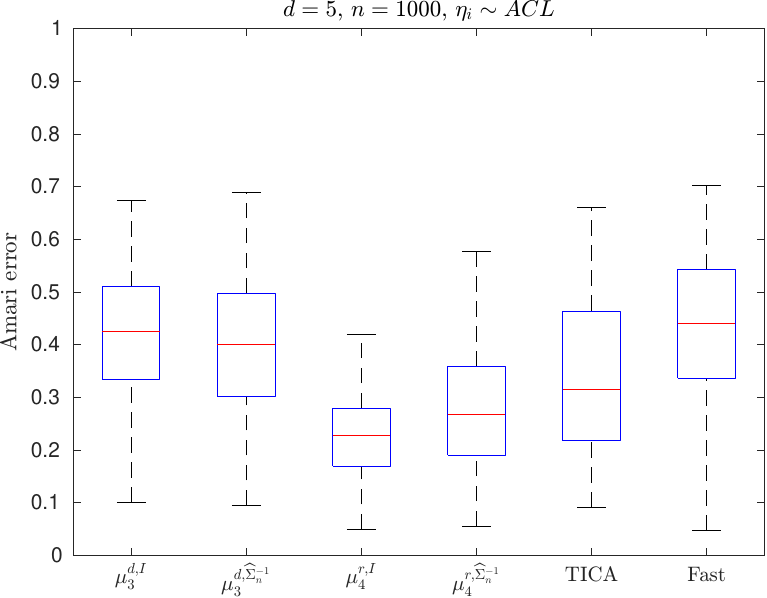}} 
		\end{subfigure}
	\end{center}
	{\begin{minipage}{12.5cm} \footnotesize \emph{Notes:} The figure shows the boxplots for the Amari errors (across $S=100$ simulations) for data sampled from the multiple scaled elliptical components model \eqref{eq:generalmultielliptical}. The different settings for the simulations designs are described in the titles and the $x$-labels indicate the different estimation methods used. \end{minipage} }
\end{figure}

\clearpage

\section{Omitted proofs}\label{appsec:proofsomitted}

\subsection{Proof of Proposition~\ref{prop:hansenJ}}

Let $\tilde A_0 = Q A_0$. Noting that $\hat g_n(\widehat A_{\widehat \Sigma_n^{-1}})$ minimizes $\|\cdot\|^2_{W_n}$ when taking $W_n = \widehat \Sigma_n^{-1}$, we get that $\hat g_n(\widehat A_{\widehat \Sigma_n^{-1}})=0$. Using Taylor's theorem we get that 
\[
0=\widehat \Sigma_n^{-1/2}\sqrt{n}\hat g_n( \widehat A_{\widehat \Sigma_n^{-1}}) = \widehat \Sigma_n^{-1/2}\sqrt{n}\hat g_n(\tilde A_0) + \widehat \Sigma_n^{-1/2} \widehat G(\bar A) \sqrt{n} {\rm vec}(\widehat A_{\widehat \Sigma_n^{-1}} -  \tilde A_0),
\]
where $\bar A$ lies on the segment between $\tilde A_0$ and $\widehat A_{\widehat \Sigma_n^{-1}}$. Pre-multiplying by $\widehat G(\bar A)'\widehat \Sigma_n^{-1/2}$ and rearranging gives 
\[
\sqrt{n} {\rm vec}(\widehat A_{\widehat \Sigma_n^{-1}} -  \tilde A_0) = -	[\widehat G(\bar A)'\widehat \Sigma_n^{-1} \widehat G(\bar A) ]^{-1} \widehat G(\bar A)'\widehat \Sigma_n^{-1} \sqrt{n}\hat g_n(\tilde A_0)~.
\]
Substituting $\sqrt{n} {\rm vec}(\widehat A_{\widehat \Sigma_n^{-1}} -  \tilde A_0)$ back into the expansion above gives 
\[
\widehat \Sigma_n^{-1/2}\sqrt{n}\hat g_n( \widehat A_{\widehat \Sigma_n^{-1}}) = \widehat N \widehat \Sigma_n^{-1/2} \sqrt{n}\hat g_n(\tilde A_0) 
\]
where 
\[
\widehat N = I_{d_g} - \widehat \Sigma_n^{-1/2} \widehat G(\bar A) [\widehat G(\bar A)'\widehat \Sigma_n^{-1} \widehat G(\bar A) ]^{-1} \widehat G(\bar A)'\widehat \Sigma_n^{-1/2}  ~.
\]
By the discussion preceding \eqref{eq:thisissigma}, we have $\Sigma^{-1/2} \sqrt{n}\hat g_n(\tilde A_0) \stackrel{d}{\to} Z \sim N(0,I_{d_g})$. Note that this random variable differs from $\widehat\Sigma^{-1/2}_n \sqrt{n}\hat g_n(\tilde A_0) \stackrel{d}{\to} Z \sim N(0,I_{d_g})$ only by something that converges to zero in probability, as $\widehat \Sigma_n \stackrel{p}{\to} \Sigma$. By Slutsky's lemma we have $\widehat\Sigma^{-1/2} \sqrt{n}\hat g_n(\tilde A_0) \stackrel{d}{\to} Z \sim N(0,I_{d_g})$, and from Proposition~\ref{prop:consist}, equation \eqref{eq:uniformder} and $\widehat \Sigma_n \stackrel{p}{\to} \Sigma$ and the continuous mapping theorem, we get 
\begin{equation}\label{eq:N}
\widehat N \stackrel{p}{\to} N = I_{d_g} -  \Sigma^{-1/2}  G(\tilde A_0)  [G(\tilde A_0)' \Sigma^{-1} G(\tilde A_0) ]^{-1} G(\tilde A_0)' \Sigma^{-1/2} ~. 
\end{equation}
We note that $N$ is a projection matrix of rank $d_g - d^2$. Combining we get 
\begin{align*}
\hat L_{\widehat \Sigma_n^{-1}}(\widehat A_{\widehat \Sigma_n^{-1}})&= \left(\widehat \Sigma_n^{-1/2}\sqrt{n}\hat g_n( \widehat A_{\widehat \Sigma_n^{-1}})\right)' \left(\widehat \Sigma_n^{-1/2}\sqrt{n}\hat g_n( \widehat A_{\widehat \Sigma_n^{-1}})\right) \\
&\stackrel{d}{\to} Z' N Z \sim \chi^2(d_g -d^2)~,  
\end{align*}
where the last step follows from \citet[][page 186]{Rao1973}. 

\subsection{Proof of Proposition~\ref{prop:subset}}

From the proof of Proposition~\ref{prop:hansenJ} we have 
\[
\widehat \Sigma_n^{-1/2}\sqrt{n}\hat g_n( \widehat A_{\widehat \Sigma^{-1}}) =  N \Sigma^{-1/2} \sqrt{n}\hat g_n(\tilde A_0) + o_p(1),
\]
where are $N$ is the projection matrix defined in \eqref{eq:N}. Let $\hat g_{1,n}$, $G_1$, $N_1$ be the equivalent quantities to  $\hat g_{n}$, $G$, $N$ just computed for the smaller set of identifying restrictions. Using similar arguments we get 
\begin{align*}
\widehat \Sigma_{11}^{-1/2}\sqrt{n}\hat g_{1,n}( \widehat A_{\widehat \Sigma_{11}^{-1}}) =&   N_1   \Sigma_{11}^{-1/2} \sqrt{n}\hat g_{1,n}(\tilde A_0) + o_p(1) \\
=&   N_1  \Sigma_{11}^{-1/2} [ I_{d_{g_1}} : 0_{d_{g_1} \times d_g} ]     \Sigma^{1/2}    \Sigma^{-1/2} \sqrt{n}\hat g_n(\tilde A_0) \\
&+ o_p(1) ~.
\end{align*}	

Define $\Xi =  \Sigma_{11}^{-1/2} [ I_{d_{m_1}} : 0_{d_{g_1} \times d_g} ] \Sigma^{1/2}$ and $J = N_1 \Xi$. Note that $N$ is idempotent and set $B \equiv J'J = \Xi' N_1 \Xi$. We show that (i) $N - B$ is idempotent and (ii) $N -B$ has rank $d_g - d_{g_1}$. First, letting $N = I_{d_g} - P$ with $P = \Sigma^{-1/2}  G(\tilde A_0)  [G(\tilde A_0)' \Sigma^{-1} G(\tilde A_0) ]^{-1} G(\tilde A_0)' \Sigma^{-1/2}$, we have 
\begin{align*}
BN &= B - B P (P'P)^{-1} P' \\  
&= B - \Xi' N_1 \Xi P (P'P)^{-1} P'~, 
\end{align*}
and $N_1 \Xi P = N_1 P_1 = 0$, such that $BN = B$. Using similar step we find that $NB = B$. Finally, consider $BB$ for which we have 
\begin{align*}
BB &= \Xi' N_1 \Xi \Xi' N_1 \Xi \\  
&=  \Xi' N_1 \Sigma_{11}^{-1/2} \Sigma_{11} \Sigma_{11}^{-1/2} N_1 \Xi \\ 
&=  \Xi' N_1 \Xi = B 
\end{align*}
Combining we get that $(N - B)(N - B) = N - B$. For (ii) note that since $N - B$ is idempotent we have ${\rm rank}(N - B) = {\rm Tr}(N - B) = d_g - d_{g_1}$. 
To complete the proof note that 
\begin{align*}
C_n &= \sqrt{n}\hat g_n(\tilde A_0)' \widehat \Sigma_n^{-1/2'} [N - B]\widehat \Sigma_n^{-1/2}  \sqrt{n}\hat g_n(\tilde A_0) + o_p(1)\\
&\stackrel{d}{\to} Z' [N - B] Z \sim \chi^2(d_g - d_{g_1})~.
\end{align*}

\newpage 

\bibliographystyle{imsart-nameyear}
\bibliography{../../bib_mtp2}

\end{document}